\newtheorem{theorem}{Theorem}[section]
\newtheorem{lemma}[theorem]{Lemma}
\newtheorem{prop}[theorem]{Proposition}
\newtheorem{cor}[theorem]{Corollary}
\theoremstyle{remark}
\newtheorem{remark}[theorem]{Remark}
\def\R{{\mathbb R}}
\def\O{{\mathcal O}}
\def\norm[#1][#2]{\| #1\|_{#2}}
\def\bignorm[#1][#2]{\bigg\| #1 \bigg\|_{#2}}
\def\ptl[#1][#2]{\frac{\partial #1}{\partial #2}}
\def\japanese[#1]{\langle #1 \rangle}
\def\eps{\varepsilon}
\def\la{\langle}
\def\ra{\rangle}
\def\les{\lesssim}
\def\ges{\gtrsim}
\def\calR{{\mathcal R}}
\def\Z{{\mathbb Z}}
\def\dist{{\rm dist}}
\def\supp{{\rm supp\;}}
\def\PV{{\rm P.V.}}
\def\calJ{{\mathcal J}}
\def\sign{{\rm sign}\;}
\def\calB{{\mathcal B}}
\def\calC{{\mathcal C}}
\def\half{\frac12}
\renewcommand{\tilde}{\widetilde}
\numberwithin{equation}{section}
\begin{document}

\title[Strichartz estimates for almost critical magnetic potentials]
{Strichartz and smoothing estimates for Schr\"odinger operators with
almost critical magnetic potentials in three and higher dimensions.}
\author{M. Burak Erdo\smash{\u{g}}an, Michael Goldberg, Wilhelm Schlag}
\thanks{The first author was partially supported by NSF grant DMS-0600101,
the second author by NSF grant DMS-0600925,
and the third author by NSF grant DMS-0617854.}
\address{Department of Mathematics \\
University of Illinois \\
Urbana, IL 61801, U.S.A.}
 \email{berdogan@math.uiuc.edu}

\address{Department of Mathematics\\ Johns Hopkins University\\
Baltimore, MD 21218, U.S.A.}
\email{mikeg@math.jhu.edu}

\address{ Department of Mathematics, University of Chicago,
5734 South University Avenue, Chicago, IL 60637, U.S.A.}
\email{schlag@math.uchicago.edu}

\begin{abstract}
In this paper we consider
Schr\"odinger operators \[H = -\Delta+ i(A\cdot\nabla + \nabla\cdot A) + V=-\Delta+L\]
in $\R^n$, $n\ge3$. Under almost optimal conditions on $A$ and $V$ both
in terms of decay and regularity we prove smoothing and Strichartz estimates,
as well as a limiting absorption principle. For large gradient perturbations the latter
is not an immediate corollary of the free case as $T(\lambda):=L(-\Delta-(\lambda^2+i0))^{-1}$
is not small in operator norm on weighted $L^2$ spaces as $\lambda\to\infty$.
We instead deduce the existence of inverses $(I + T(\lambda))^{-1}$ by showing
that the spectral radius of $T(\lambda)$ decreases to zero.  In particular, 
there is an integer $m$ such that $\limsup_{\lambda\to\infty}\|T(\lambda)^m\|<\frac12$.
This is based on an angular decomposition of the free resolvent for which we
establish the limiting absorption bound
\begin{equation}\label{eq:introeq}
\| D^{\alpha}\calR_{d,\delta}(\lambda^2)f \|_{B^*} \le
C_n\lambda^{-1+|\alpha|} \|f\|_B
\end{equation}
where $0\le|\alpha|\le 2$, $B$ is the Agmon-H\"ormander space, and $\calR_{d,\delta}(\lambda^2)$
is the free resolvent operator at energy $\lambda^2$ whose kernel is restricted in angle to a cone
of size $\delta$ and by $d$ away from the diagonal $x=y$. The main point is that $C_n$ only
depends on the dimension, but not on the various cut-offs. The proof of~\eqref{eq:introeq} avoids
 the Fourier transform and instead uses H\"ormander's variable coefficient
Plancherel theorem for oscillatory integrals.
\end{abstract}

\maketitle

\section{Introduction}

In this paper we prove Strichartz and smoothing bounds for the
magnetic Schr\"odinger operator on $L^2(\R^n)$
\begin{equation}
\label{eq:schr1} H = -\Delta+ i(A\cdot\nabla + \nabla\cdot A) + V = -\Delta + L
\end{equation}
under almost optimal assumptions on the large perturbations $A$ and
$V$. As usual we will assume that zero energy is neither an
eigenvalue nor a resonance. This means that the perturbed resolvent
$(H-z)^{-1}$ remains bounded on the weighted spaces $L^{2,1+}\to
L^{2,1-}$ as $z\to0$, $\Im z>0$. This condition is equivalent
(assuming sufficient decay on $A,V$)
 to the absence of nonzero solutions $f$ of $Hf=0$ with
$f\in  L^{2,\frac{n-4}{2}-}$.  When $n\ge 5$ any such solution
belongs to $L^2(\R^n)$ itself, so it suffices to check that zero
energy is not an eigenvalue.

\begin{theorem}
\label{thm:main} Let $A$ and $V$ be real-valued such that for all
$x\in\R^n$, $n\ge3$, and some fixed but arbitrary $\eps>0$ and all
sufficiently small $0<\eps'<\eps$,
  \begin{align}
  |A(x)|+ \la x\ra |V(x)| &\les \la x\ra^{-1-\eps}  \label{eq:ass1} \\
\la x\ra^{1+\eps'}A(x) &\in \dot W^{\half, 2n}(\R^n) \label{eq:ass2} \\
A &\in C^0(\R^n)\label{eq:ass3}
\end{align}
Furthermore, assume that zero energy is neither an eigenvalue nor a
resonance of $H$. Then, with $P_c$ being the projection onto the continuous
spectrum,
\begin{equation}\label{eq:strich} \|e^{itH} P_c f\|_{L_t^q(L_x^p)}
\les \|f\|_{L^2(\R^n)} \end{equation} provided
$\frac{2}{q}+\frac{n}{p}=\frac{n}{2}$ and $2\le p<\frac{2n}{n-2}$.
Moreover, the Kato smoothing estimates
\begin{equation}\label{eq:smooth}\begin{split}
\int_0^\infty \big\|\la x\ra^{-\sigma} | \nabla |^{\frac12}
e^{itH}P_c f\big\|_2^2\, dt &\le C\|f\|_2^2\\
\int_0^\infty \big\|\la x\ra^{-2\sigma}\la \nabla\ra^{\frac12}
e^{itH}P_c f\big\|_2^2\, dt &\le C\|f\|_2^2
\end{split}
\end{equation}
hold with $\sigma>\frac12$.
\end{theorem}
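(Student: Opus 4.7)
The plan is to follow the standard three-step strategy for deriving dispersive estimates on perturbed Schr\"odinger equations: establish a limiting absorption principle (LAP) for $H$; use the LAP to obtain the Kato smoothing bounds \eqref{eq:smooth}; and then combine the smoothing with free Strichartz via Duhamel to obtain \eqref{eq:strich}.

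The heart of the matter is the LAP: we need $\sup_{\lambda>0}\|(H-(\lambda^2+i0))^{-1}\|_{B\to B^*}<\infty$. Formally writing $(H-(\lambda^2+i0))^{-1}=R_0(\lambda^2+i0)(I+T(\lambda))^{-1}$ with $T(\lambda)=L R_0(\lambda^2+i0)$, the task reduces to inverting $I+T(\lambda)$ on $B^*$. The well-known difficulty, already flagged in the abstract, is that the gradient piece $iA\cdot\nabla+i\nabla\cdot A$ prevents $\|T(\lambda)\|$ from tending to $0$ as $\lambda\to\infty$, so the Born series is not available. My plan is to split the range of $\lambda$ into three regimes. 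For small $\lambda$ the hypothesis that $0$ is neither an eigenvalue nor a resonance gives invertibility of $I+T(\lambda)$ together with continuity up to $\lambda=0$. For $\lambda$ in a fixed compact set away from $0$ I would invoke the Fredholm alternative, with $I+T(\lambda)$ failing to be invertible only at embedded eigenvalues; these are ruled out under the decay hypotheses \eqref{eq:ass1}–\eqref{eq:ass3} on $A,V$ by Kato-type absence-of-embedded-eigenvalue arguments adapted to magnetic potentials.

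The core new work lies in the large-$\lambda$ regime. Here I plan to use the uniform angular/off-diagonal decomposition of the free resolvent stated in \eqref{eq:introeq}: write $R_0(\lambda^2+i0)=\sum \calR_{d,\delta}(\lambda^2)$ with controlled weights on each piece, so that $T(\lambda)^m$ becomes a finite sum of compositions of localized kernels interleaved with $L$. The bound \eqref{eq:introeq} has constants depending only on the dimension, so after choosing $\delta$ small and $d$ large one gains smallness from each angular/spatial cutoff; the decay and $\dot W^{1/2,2n}$ regularity of $A$ in \eqref{eq:ass1}–\eqref{eq:ass3} are precisely what is needed to transfer this smallness through $L$. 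Iterating, I would prove $\limsup_{\lambda\to\infty}\|T(\lambda)^m\|_{B^*\to B^*}<\tfrac12$ for some integer $m$, so the spectral radius of $T(\lambda)$ tends to $0$ and the Neumann-type identity
\[
(I+T(\lambda))^{-1}=\Bigl(\sum_{k=0}^{m-1}(-T(\lambda))^k\Bigr)\bigl(I-(-T(\lambda))^m\bigr)^{-1}
\]
delivers the uniform bound on $(I+T(\lambda))^{-1}$. This is the main obstacle and accounts for the technical apparatus (H\"ormander's variable coefficient Plancherel, the careful angular decomposition) advertised in the abstract.

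Once the LAP is in hand, the smoothing estimates \eqref{eq:smooth} follow from Kato's theorem in its quadratic form: for $B=\la x\ra^{-\sigma}|\nabla|^{1/2}$ and $B=\la x\ra^{-2\sigma}\la\nabla\ra^{1/2}$, the quantity $\sup_{\lambda,\epsilon>0}\|B(H-(\lambda\pm i\epsilon))^{-1}B^*\|_{2\to 2}$ is controlled through the LAP together with the analogous bound for the free resolvent (which gives the half-derivative gain), and then $\int_0^\infty\|Be^{itH}P_c f\|_2^2\,dt\lesssim\|f\|_2^2$ is automatic by spectral theory. For Strichartz I would use Duhamel,
\[
e^{itH}P_c f=e^{-it\Delta}P_c f-i\int_0^t e^{-i(t-s)\Delta}L\,e^{isH}P_c f\,ds,
\]
apply the Keel--Tao free Strichartz estimate to the linear term, and control the integral term by combining the dual smoothing (or inhomogeneous smoothing-Strichartz) inequality with \eqref{eq:smooth}: the $A\cdot\nabla$ part of $L$ is exactly balanced by the half-derivative gain that appears in the first line of \eqref{eq:smooth}, while the $V$ part is absorbed by the weaker bound. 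The Christ--Kiselev lemma handles the restriction to non-endpoint $p<\tfrac{2n}{n-2}$ since our range excludes the endpoint pair.
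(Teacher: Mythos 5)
Your overall architecture matches the paper's: three energy regimes, a power method with an angular decomposition of the free resolvent at high energy, Fredholm plus the zero-energy hypothesis at low energy, and Kato smoothing combined with Duhamel/Christ--Kiselev (which is exactly the content of the Rodnianski--Schlag proposition the paper invokes) to pass to Strichartz. However, there are two genuine gaps. First, you misidentify the source of smallness at high energy. The uniform bound \eqref{eq:introeq} is uniform \emph{in} the cutoffs $d,\delta$ but does not itself become small as $\delta\to 0$ or $d$ grows, so ``choosing $\delta$ small and $d$ large'' gains nothing per factor; with only that input, $\|T(\lambda)^m\|$ is merely bounded by $C^m$ times the (large) number of terms. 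The actual mechanism in the paper is a dichotomy: for \emph{directed} products (all cones nearly aligned) one exploits that each conically restricted resolvent translates supports in a fixed direction, so after finitely many steps the product only sees the small tail $\|A\chi_{[x_n>R]}\|_Y$ --- a Volterra-type gain; for \emph{undirected} products (two adjacent cones separated in angle) one uses non-stationary phase, which requires approximating $A$ by smooth compactly supported functions (this is where continuity of $A$ enters) and yields only $o(1)$ as $\lambda\to\infty$ with no rate. Without this dichotomy your claim $\limsup_\lambda\|T(\lambda)^m\|<\tfrac12$ is unsupported.

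Second, a LAP for $H$ in $B\to B^*$ (or smallness of $T(\lambda)^m$ on $B^*$) alone does not yield the derivative-weighted bounds you need: the first line of \eqref{eq:smooth}, and the absorption of the $A\cdot\nabla$ part of $L$ in the Duhamel step, require the uniform bound on $\la x\ra^{-\sigma}|\nabla|^{\frac12}R_L(\lambda^2+i0)|\nabla|^{\frac12}\la x\ra^{-\sigma}$, i.e.\ half a derivative must be commuted through $(I+R_0L)^{-1}$, and this does not follow from the $B\to B^*$ resolvent bound plus the free estimate. The paper handles this by running the whole high-energy argument in $\lambda$-adapted spaces $X_\lambda^*$ (norm $\|f\|_{B^*}+\lambda^{-1}\|\la\nabla\ra f\|_{B^*}$), which forces the directed-resolvent estimate to be proved for $D^\alpha\calR_{d,\delta}$ with $|\alpha|\le 2$; you would need an analogous device. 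Relatedly, the $\dot W^{\frac12,2n}$ hypothesis \eqref{eq:ass2} is not what drives the high-energy smallness (there only $A\in Y\cap C^0$ is used); it is what makes the factorization $L=\sum_j Y_j^*Z_j$ into half-derivative pieces legitimate (boundedness of $|\nabla|^{\frac12}A\la x\ra^{1+\eps'}|\nabla|^{-\frac12}$) and is also needed in the low-energy commutator analysis, so your Strichartz step should spell out that factorization rather than treat it as automatic.
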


The secondary condition \eqref{eq:ass2} deals with the
regularity of $A$ but does not impose any extra decay beyond what is
assumed in~\eqref{eq:ass1}.  Note that $\la x\ra^{1+\eps'}A(x)$
must decay like $\la x\ra^{-(\eps-\eps')}$ which already belongs to $\dot
W^{\frac12,2n}(\R^n)$.  A stronger, but more easily verifiable hypothesis would
be to require $A$ to be Lipschitz continuous with
$|\nabla A(x)| \les \japanese[x]^{-2-\eps}$.  However stated,
this condition permits the commutation of $A$ with
$|\nabla|^{\frac12}$, which is essential to our factorization of $L$ into
pairs of pseudo-differential operators each having order $\frac12$.
 The continuity
assumption~\eqref{eq:ass3} is required for our treatment of large
energies. To relax it, one needs to carry out some of our large
energy analysis on spaces other than the $L^2$ based spaces $B,B^*$
which we use here. See~\cite{IS} for such work on Stein-Tomas type
spaces. 

Since $L^1\to L^\infty$ dispersive bounds are currently
unknown for any $A \not= 0$, we cannot follow the usual interpolation method.
Instead, we adopt an argument introduced in~\cite{RS}, where the validity of
Strichartz inequalities is instead derived from Kato's theory of smooth
perturbations. This paper is related to our three-dimensional paper~\cite{EGS},
where a result similar to Theorem~\ref{thm:main} was proved but under much
stronger conditions on $A,V$, both in terms of decay as well as regularity.
In
\cite{Stef} and~\cite{GST} Strichartz and smoothing estimates were
obtained for {\em small} $A$ and $V$. For more background and many references
on magnetic operators see Erd\"os's survey~\cite{Erdos}.

The approach of this work is perturbative around the free case
despite the fact that we make no smallness assumption.
The main novel ingredient in
this paper is a limiting absorption estimate for large energies on almost
optimal weighted spaces.
To see the difficulty with large energies, recall that in~\cite{Agmon} and~\cite{IS} it is
proved that for $H$ as in~\eqref{eq:schr1} under suitable decay
conditions on $A$ and $V$ and with $\sigma>\frac12$,
\begin{equation}\label{eq:limap0} \sup_{\lambda\in[\delta,\delta^{-1}]} \|
\la x\ra^{-\sigma} \la \nabla\ra (H-(\lambda^2+i0))^{-1}
\la \nabla\ra \la x\ra^{-\sigma} \|_{2\to2} \le C(\delta)<\infty
\end{equation}
provided there are no imbedded eigenvalues in the continuous
spectrum (which is known due to recent work by Koch and
Tataru~\cite{KochT}).  It is well--known that this {\em limiting
absorption principle} is of fundamental importance for proving
dispersive estimates, at least for the case of large potentials.
However, one needs to consider all real $\lambda$ instead of
restricting to a compact interval in the positive halfline. To
extend~\eqref{eq:limap0} toward zero energies is similar to the case
$A=0$. This step requires the assumption on zero energy.

Note that~\eqref{eq:limap0} as stated cannot be extended to a semi-infinite
interval since it would fail even for the free resolvent.
Indeed, with $\sigma >\frac12$
\begin{equation}\label{eq:freeone}
\|\la x\ra^{-\sigma} \la \nabla\ra^{\alpha}
(H_0-(\lambda^2+i0))^{-1}\la \nabla\ra^{\alpha}
\la x\ra^{-\sigma}\|_{2\to2} \sim \lambda^{2\alpha - 1}
\end{equation}
for any $\alpha \in [0,1]$ and all $\lambda > 1$.  This shows that
no more than one derivative in total can be gained here while still preserving
a uniform upper bound.  Furthermore, in the borderline case $\alpha = \frac12$
there is no decay of the operator norm in the limit $\lambda \to \infty$. This is the
main difficulty we face when $A$ and $\lambda$ are large.

We will adopt the shorthand notation
\[ R_0(z) := (H_0 - z)^{-1} \]
for the resolvent of the Laplacian.  The resolvent of a general operator
$H$ will be indicated by $R_H(z)$, or else $R_L(z)$ in the case where
$H$ is specifically of the form $H_0 + L$.  Formally, the relationship between
$R_L$ and $R_0$ is captured in the identity
\[R_L(z) = (I + R_0(z)L)^{-1} R_0(z).\]

In this paper we extend~\eqref{eq:freeone}
to~$H = H_0 + L$ for the class of first-order perturbations described in
Theorem~\ref{thm:main}.  A unified statement of the mapping properties
of the resolvent of $H$ over the entire spectrum $\lambda > 0 $ is as follows.

\begin{theorem} \label{thm:limap}
Suppose $H$ is a magnetic Schr\"odinger operator whose potentials satisfy the
conditions $\japanese[x]^{1+\eps}(|A| + |V|) \in L^\infty(\R^n)$ with $A$
being continuous.  Then for $\sigma > \frac12$
and $\alpha \in [0, \frac12]$,
\begin{equation} \label{eq:limap1}
\sup_{\lambda > 1} \, \lambda^{1-2\alpha}
\| \la x\ra^{-\sigma} \la\nabla\ra^{\alpha} (H - (\lambda^2 + i0))^{-1}
   \la\nabla\ra^{\alpha} \la x\ra^{-\sigma}\|_{2\to2} \les 1.
\end{equation}
If one further assumes that $A$, $V$ satisfy the full conditions of
Theorem~\ref{thm:main}, and zero is not an eigenvalue or resonance of $H$,
then this bound can be extended to $\lambda \in [0,\infty]$
in either of two ways:
\begin{equation} \label{eq:limap2} \begin{split}
\sup_{\lambda \ge 0} \ \la \lambda \ra^{1-2\alpha}
\| \la x \ra^{-\sigma} |\nabla|^{\alpha} (H - (\lambda^2 + i0))^{-1}
   |\nabla|^{\alpha} \la x\ra^{-\sigma}\|_{2\to2} \les 1 \\
\sup_{\lambda \ge 0} \ \la \lambda \ra^{1-2\alpha}
\| \la x \ra^{-2\sigma}\la\nabla\ra^{\alpha} (H - (\lambda^2 + i0))^{-1}
   \la\nabla\ra^{\alpha} \la x\ra^{-2\sigma}\|_{2\to2} \les 1
\end{split}
\end{equation}
As a consequence, the spectrum of $H$ is purely absolutely continuous over the
entire interval $[0,\infty)$, and both the operators $\japanese[x]^{-\sigma}
|\nabla|^{\frac12}$ and $\japanese[x]^{-2\sigma}\japanese[\nabla]^{\frac12}$
are $H$-smooth on this interval.
\end{theorem}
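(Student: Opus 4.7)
The plan is to reduce \eqref{eq:limap1} to an invertibility statement. Via the resolvent identity
\[ R_H(z) = (I + R_0(z) L)^{-1} R_0(z) \]
and the free bound \eqref{eq:freeone}, the desired estimate \eqref{eq:limap1} follows once we establish that $I + T(\lambda)$ is invertible with uniformly bounded norm on a suitable weighted space, where $T(\lambda) := R_0(\lambda^2+i0)L$. The obstruction is flagged by \eqref{eq:freeone} itself: at the borderline $\alpha=\half$ the operator norm of $T(\lambda)$ does not decay as $\lambda\to\infty$, so $T(\lambda)$ is never small in norm and the usual Neumann series argument (used in \cite{Stef,GST} for small $A,V$) breaks down.

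As signalled in the abstract, the key is to show instead that the \emph{spectral radius} of $T(\lambda)$ tends to zero. Concretely, I would aim for
\[ \limsup_{\lambda\to\infty}\|T(\lambda)^m\| < \half \]
for some integer $m=m(n)$, which produces $(I+T(\lambda))^{-1}$ uniformly through the algebraic identity
\[ (I+T)^{-1} = \Bigl(\sum_{j=0}^{m-1}(-T)^j\Bigr)\bigl(I-(-T)^m\bigr)^{-1} \]
combined with a geometric series for the second factor. To estimate $T(\lambda)^m$, I would factor $L$ into a sum of pseudo-differential products $L_1 L_2$ in which each factor has order $\half$; the hypothesis \eqref{eq:ass2} together with the continuity \eqref{eq:ass3} is precisely what permits $|\nabla|^{1/2}$ to be commuted across $A$ with harmless error. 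Each internal resolvent in the composition $R_0 L R_0 L \cdots R_0 L$ is then sandwiched by two half-order pieces of $L$, and \eqref{eq:freeone} only gives $O(1)$ control there. The smallness must therefore come from elsewhere, and this is the role of \eqref{eq:introeq}: decomposing $R_0$ dyadically in angular width $\delta$ and distance $d$ from the diagonal, each piece $\calR_{d,\delta}(\lambda^2)$ satisfies the same half-derivative bound with a constant $C_n$ that is independent of the cut-offs. Pairing these localized pieces with the spatial decay of $A,V$ extracts a small factor per angular layer, while the dimension-only constant $C_n$ keeps the sum over the $(d,\delta)$-scales under control. Iterating $m$ times yields the required smallness.

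For the extension \eqref{eq:limap2} to $\lambda\in[0,\infty)$ I would glue \eqref{eq:limap1} on $\lambda>1$ to the Agmon-type bound \eqref{eq:limap0} on any compact subinterval of $(0,\infty)$, and to a standard Fredholm analysis of $I + R_0(0)L$ near $\lambda=0$, which is legitimate because $0$ is assumed to be neither an eigenvalue nor a resonance. Replacing $\la\nabla\ra^\alpha$ by $|\nabla|^\alpha$ in the first line of \eqref{eq:limap2} only contributes a low-frequency difference which is absorbed by the polynomial weight. Pure absolute continuity of the spectrum on $[0,\infty)$ and the $H$-smoothness of both $\la x\ra^{-\sigma}|\nabla|^{1/2}$ and $\la x\ra^{-2\sigma}\la\nabla\ra^{1/2}$ follow at once from \eqref{eq:limap2} at $\alpha=\half$ by Kato's classical smooth perturbation theorem. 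The principal obstacle in the whole argument is the spectral-radius estimate: arranging the $m$-fold product $T(\lambda)^m$ so that the decay of $A,V$ and the uniform angular estimate \eqref{eq:introeq} combine into a quantitative gain that beats the $O(1)$ baseline of \eqref{eq:freeone}, without losing factors from the pseudo-differential commutations on which the factorization of $L$ depends.
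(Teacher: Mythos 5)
Your overall skeleton matches the paper: reduce \eqref{eq:limap1} to inverting $I+T(\lambda)$, note that \eqref{eq:freeone} forbids a plain Neumann series at $\alpha=\half$, and instead show $\limsup_{\lambda\to\infty}\|T(\lambda)^m\|<\half$ for some large $m$, then glue in the Agmon--Ionescu--Schlag bound on compact energy intervals and a Fredholm analysis at zero, and finish with Kato's smoothing theory. The genuine gap is in the one step you yourself flag as the principal obstacle: your proposed mechanism for the smallness of $T(\lambda)^m$ does not work as described. The estimate \eqref{eq:introeq} is uniform in $d,\delta$ but gives no smallness at all in those parameters, and ``pairing the localized pieces with the spatial decay of $A,V$'' cannot extract a small factor per angular sector: the decay of $A$ is a fixed quantity that does not interact with the angular decomposition, and summing the roughly $\delta^{1-n}$ sectors per resolvent simply reproduces the $O(1)$ baseline of \eqref{eq:freeone}; raised to the $m$-th power this gives $(\delta^{1-n}C_n\|L\|)^m$, which grows rather than decays. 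There is also a combinatorial explosion you do not address: expanding all $m$ resolvents produces on the order of $\delta^{(1-n)m}$ terms, so a per-term bound alone cannot close the argument.

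What is actually needed (and what the paper does in Section~\ref{sec:high_limap}) is a dichotomy among the terms of the expansion \eqref{eq:decomp2}. For \emph{directed} products, where consecutive cones are nearly aligned, the gain is of Volterra type: each factor translates supports a definite distance in a fixed direction, so after finitely many steps one lands in the region where $\|A\chi_{[x_n>R]}\|_Y$ is as small as one likes (density of compactly supported functions in $Y$), giving a factor $r^m$ per product (Lemma~\ref{lem:directedprod}); this beats the count $\delta^{1-n}(C_n)^m$ of directed products (Lemma~\ref{lem:combinatorics}) once $\delta\sim 1/m$ and $m$ is large --- and it is precisely because $\delta$ must shrink with $m$ that the $\delta$-uniformity of \eqref{eq:introeq} is indispensable. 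For \emph{undirected} products, which are overwhelmingly numerous, the gain comes instead from angular separation of two adjacent cones: non-stationary phase in the intermediate variable yields $O(\lambda^{-N})$ for smooth compactly supported $A$ (Lemma~\ref{lem:nonstatphase}), and an approximation argument --- this is exactly where the continuity hypothesis \eqref{eq:ass3} enters --- upgrades this to vanishing as $\lambda\to\infty$, with no rate, for general continuous $A\in Y$ (Lemma~\ref{lem:RiemannLebesgue}); one also needs the separate $L^2$ bound for the short-range piece $R_d$ (Lemma~\ref{lem:R_d}). Choosing first $m$ (hence $\delta$ and $d$) to control the directed sum and then $\lambda_1$ large to kill the finite undirected sum is how Lemma~\ref{lem:largem} is assembled. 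None of this structure --- the directed/undirected split, the support-translation argument, the stationary-phase/approximation argument, or the order of choice of parameters --- appears in your sketch, so as written the spectral-radius estimate, and with it \eqref{eq:limap1}, is not established. The remainder of your outline (intermediate energies, the low-energy Fredholm step under the zero-energy assumption, passing between $\la\nabla\ra^\alpha$ and $|\nabla|^\alpha$, and the Kato-smoothing conclusions) is consistent with the paper, modulo the commutator estimates of the appendix that make those reductions legitimate.
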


\begin{remark} In fact, \eqref{eq:limap1} is valid for any
$\alpha \in [0,1]$, and with a somewhat wider class of potentials than is
described here.  This is made evident in the statement and proof of
Corollary~\ref{cor:highenZ}.
Furthermore, it is only necessary to verify the first assertion in
\eqref{eq:limap2}.  The second line follows immediately because the operator
$\japanese[x]^{-2\sigma}\japanese[\nabla]^{\frac12}|\nabla|^{-\frac12}
\japanese[x]^{\sigma}$ and its transpose are bounded on $L^2$,
see Lemma~\ref{lem:frac}.
\end{remark}
\begin{remark} A result of type \eqref{eq:limap1}, in the case $\alpha=0$,
 is proved in \cite{Rob}
using the method of Mourre commutators and micro-local analysis.  In
that work the potentials require only very slight polynomial decay,
however they are also assumed to be infinitely differentiable, with
the derivatives satisfying a symbol-like decay condition.
\end{remark}

This paper is organized as follows: In Section~\ref{sec:setup} we
present the reduction of the Strichartz estimates to the Kato
smoothing property~\cite{Kato}. More precisely, we are reduced to
proving that $Z_0:=\la x\ra^{-\sigma}|\nabla|^{\frac12}$ is
smoothing relative to $H$ for $\sigma>\frac12$ (it is a classical
result that $Z_0$ is smoothing relative to $H_0$). In
Section~\ref{sec:directed} we establish our main technical
ingredient, i.e., the limiting absorption principle for the
angularly truncated free resolvent kernel. It is essential here that
the bound does not deteriorate as the size of the truncation
decreases to zero. 

In Section~\ref{sec:high_limap} we use this bound
to prove a limiting absorption principle for the perturbed resolvent
via the ``power method'', i.e., by showing that $(LR_0)^m$ has small
norm for large energies and large~$m$. The idea is to write this
power as a sum of products involving conically restricted free
resolvents and to obtain a gain for both the ``directed''  (where
all the factors have almost aligned cones) and the ``undirected''
summands. In the former case this takes the form of a Volterra-type
gain, whereas in the latter one exploits a gain coming from angular
separation (for this one needs Schwartz potentials and general $A$
that are approximated by Schwartz functions; it is here that $A\in
C(\R^n)$ is needed). 

Finally, Section~\ref{sec:small} presents the
low energy case. Although this is similar to the case of $A=0$ in
that we use Fredholm's alternative and a Neumann series, it does
have some challenges of its own mainly in form of commutator
estimates. Finally, in the appendix we collect some tools from
harmonic analysis.

\section{The basic setup}
\label{sec:setup}

The Strichartz estimates stated in Theorem~\ref{thm:main} will be proved using
Proposition~\ref{prop:Katoth}
below, which was proved in \cite{RS}, see Theorem~4.1 in that paper. It is based on
Kato's notion of smoothing
operators, see~\cite{Kato}. We recall that for a self-adjoint operator $ H$,
an operator $\Gamma$ is called
$H$-smooth in Kato's sense if for any $f\in {\mathcal D}(H_0)$
\begin{equation}
\label{eq:Csmooth} \|\Gamma e^{it H} f\|_{L^2_t L^2_x}\le
C_{\Gamma}(H) \|f\|_{L^2_x}
\end{equation}
or equivalently, for any $f\in L^2_x$
\begin{equation}
\label{eq:CsmoothR} \sup_{\eps >0} \|\Gamma R_{H}(\lambda\pm i\eps)
f\|_{L^2_\lambda L^2_x} \le C_{\Gamma}(H)\|f\|_{L^2_x}.
\end{equation}
We shall call $C_{\Gamma}(H)$ the smoothing bound of $\Gamma$
relative to $H$. Let $\Omega\subset \R$ and let $P_\Omega$ be a
spectral projection of $H$ associated with a set $\Omega$. We say
that  $\Gamma$ is $H$-smooth on $\Omega$ if $\Gamma P_{\Omega}$ is
$H$-smooth. We denote the corresponding smoothing bound by
$C_\Gamma(H,\Omega)$. It is not difficult to show (see
e.g.~\cite{RS4}) that, equivalently,  $\Gamma$ is $H$-smooth on
$\Omega$ if
\begin{equation}
\label{eq:smon} \sup_{\beta >0} \|\chi_\Omega(\lambda)\Gamma
R_{H}(\lambda\pm i\beta ) f\|_{L^2_\lambda L^2_x} \le C_{\Gamma}(H,
\Omega)\|f\|_{L^2_x}.
\end{equation}

The estimate \eqref{eq:strich} of Theorem~\ref{thm:main} is obtained
by means of the following result. The remainder of the paper is
devoted to verifying the conditions needed in
Proposition~\ref{prop:Katoth}. Furthermore, this verification will
establish the smoothing estimate~\eqref{eq:smooth}.

\begin{prop}\label{prop:Katoth}
Let $H_0=-\Delta$ and $H= H_0 + L$ with $L=\sum_{j=1}^J Y_j^* Z_j$.
We assume that each $Y_j$ is $H_0$-smooth with a smoothing bound
$C_B(H_0)$ and that for some $\Omega\subset \R$ the operators $Z_j$
are $H$-smooth on $\Omega$ with the smoothing bound $C_A(H,\Omega)$.
Assume also that the unitary semigroup $e^{it H_0}$ satisfies the
estimate
\begin{equation}
\label{eq:strH0} \|e^{it H_0} \psi_0\|_{L^q_t L^r_x} \le C_{H_0}
\|\psi_0\|_{L^2_x}
\end{equation}
for some $q\in (2,\infty]$ and $r\in [1,\infty]$. Then the semigroup
$e^{itH}$ associated with $H = H_0 + L$, restricted to the spectral
set $\Omega$, also verifies the estimate \eqref{eq:strH0}, i.e.,
\begin{equation}
\label{eq:strHk} \|e^{it H}P_\Omega \psi_0\|_{L^q_t L^r_x} \le J
C_{H_0}C_B(H_0) C_A(H,\Omega) \|\psi_0\|_{L^2_x}
\end{equation}
\end{prop}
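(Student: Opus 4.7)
The plan is to derive the Strichartz estimate for $e^{itH}P_\Omega$ from Duhamel's formula, using the decomposition $L=\sum_j Y_j^* Z_j$ together with the two smoothing hypotheses and the given free estimate in a $TT^*$--type argument.

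First I would write, for $\psi_0\in L^2$,
\begin{equation*}
e^{itH}P_\Omega \psi_0 \;=\; e^{itH_0}P_\Omega \psi_0 \;-\; i\int_0^t e^{i(t-s)H_0}\,L\,e^{isH}P_\Omega \psi_0\,ds,
\end{equation*}
noting that $P_\Omega$ commutes with $e^{itH}$. The free term is controlled by \eqref{eq:strH0} applied to $P_\Omega\psi_0$, yielding a constant $C_{H_0}$. It therefore remains to bound the perturbative term, and by the triangle inequality this reduces to estimating, for each $j$,
\begin{equation*}
\mathcal{I}_j\psi_0(t) := \int_0^t e^{i(t-s)H_0}\,Y_j^*\bigl[Z_j e^{isH}P_\Omega\psi_0\bigr]\,ds
\end{equation*}
in $L^q_t L^r_x$.

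The next step is to remove the retarded cutoff $s\le t$ via the Christ--Kiselev lemma; this is legitimate because $q>2$, so any bounded map from $L^2_s L^2_x$ into $L^q_t L^r_x$ given by an untruncated integral operator automatically has a bounded retarded version with the same constant up to an absolute factor. We are thus reduced to controlling the operator
\begin{equation*}
F \;\longmapsto\; \int_\R e^{i(t-s)H_0}\,Y_j^* F(s,\cdot)\,ds
\end{equation*}
from $L^2_s L^2_x$ to $L^q_t L^r_x$, with $F(s,\cdot)=Z_j e^{isH}P_\Omega \psi_0$. Here the $H$-smoothness of $Z_j$ on $\Omega$ in the form \eqref{eq:Csmooth} gives $\|F\|_{L^2_{s,x}}\le C_A(H,\Omega)\|\psi_0\|_2$.

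For the operator itself I would use a $TT^*$ factorization: dualizing the $H_0$-smoothness of $Y_j$ yields
\begin{equation*}
\bignorm[\int_\R e^{-isH_0} Y_j^* F(s,\cdot)\,ds][L^2_x] \;\le\; C_B(H_0)\,\|F\|_{L^2_s L^2_x},
\end{equation*}
and then applying \eqref{eq:strH0} to the resulting $L^2_x$ initial datum controls the $L^q_t L^r_x$ norm by $C_{H_0}$ times the previous quantity. Combining these three ingredients and summing the $J$ terms produces exactly the stated constant $JC_{H_0}C_B(H_0)C_A(H,\Omega)$.

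The main obstacle I anticipate is the justification of the Christ--Kiselev step: one must pass from a space-time estimate for the full convolution in $s\in\R$ to the causal truncation $s\le t$ that actually appears in Duhamel's formula. This requires $q>2$ strictly (which is part of the hypothesis) and a mild density/approximation argument to first ensure that the manipulations are carried out on a class of $\psi_0$ for which the Duhamel identity and the various integrals are absolutely convergent, before extending by density to all $\psi_0\in L^2$. Everything else is essentially the standard dual-smoothing plus free Strichartz composition.
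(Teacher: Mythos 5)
Your argument is correct and is essentially the proof the paper points to in \cite{RS} (Theorem~4.1 there): Duhamel with the free evolution on the outside, the $H$-smoothness of $Z_j$ on $\Omega$ to place $Z_je^{isH}P_\Omega\psi_0$ in $L^2_{s,x}$, the dual of the $H_0$-smoothness of $Y_j$ composed with the free Strichartz bound \eqref{eq:strH0}, and the Christ--Kiselev lemma \cite{CK} (this is exactly where $q>2$ enters) to pass from the untruncated to the retarded integral. The only caveat is cosmetic: Christ--Kiselev costs an absolute factor depending on $q$, so the constant in \eqref{eq:strHk} should be read up to such a factor, which is all that is ever used.
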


We refer the reader to \cite{RS} for the proof.

To apply this proposition we write,
with a decreasing weight $w(x)=\la x\ra^{-\tau}$ chosen from the range
 $\tau \in (\frac12, \frac12+\eps')$,
\begin{equation}\label{eq:Ldecomp}
 i(A\cdot\nabla + \nabla\cdot A) = \sum_{j=1}^2 Y_j^* Z_j,\quad V= Y_3^* Z_3
\end{equation}
where
\begin{equation}
\label{eq:YZdef}
\begin{aligned}
 & Y_1^* := i Aw^{-1} \cdot \nabla |\nabla|^{-\frac12}, \quad Z_1
  := |
 \nabla|^{\frac12} w \\
 & Y_2 := Z_1,\quad Z_2 := Y_1, \quad Y_3 = |V|^{\frac12}\sign V,\quad Z_3= |V|^{\frac12}
\end{aligned}
\end{equation}
Note that the cross-term produced by $Y_1^*Z_1$ is point-wise
multiplication by the purely imaginary function $i(Aw^{-1} \cdot
(\nabla w))$.  It is canceled by the corresponding cross-term in
$Y_2^*Z_2$.

We now reduce the smoothing properties of $Y_j$ and $Z_j$, $1 \le j \le 3$,
relative to $H_0$ and $H$, respectively, to the smoothing properties of
\begin{equation}\label{eq:Z0}
Z_0 := \la x\ra^{-\sigma}|\nabla|^{\frac12},
\end{equation}
where $\sigma$ is chosen so that $\frac12 < \sigma < \tau$.
It is standard that $Z_0$
is smoothing relative to $H_0$.
Theorem~\ref{thm:limap}, once proven, demonstrates that $Z_0$
is also smoothing relative to~$H$.
We first state a technical lemma which
explains the role of our regularity assumption~\eqref{eq:ass2}.

\begin{lemma}
  \label{lem:Amap} Let $A,\eps'$ be as in Theorem~\ref{thm:main}.
 Then the operator \[|\nabla|^{\half} A\la
  x\ra^{1+\eps'} |\nabla|^{-\half}
\]
is bounded
  on~$L^2$.
\end{lemma}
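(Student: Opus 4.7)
Write $B(x) := \la x\ra^{1+\eps'}A(x)$, so the operator in question is $|\nabla|^{\half} M_B |\nabla|^{-\half}$, where $M_B$ denotes pointwise multiplication by $B$. The plan is to split
\[
|\nabla|^{\half} M_B |\nabla|^{-\half} = M_B + \bigl[|\nabla|^{\half}, M_B\bigr]\,|\nabla|^{-\half}
\]
and treat the two pieces separately. The function $B$ is bounded: by \eqref{eq:ass1} we have $|A(x)|\les\la x\ra^{-1-\eps}$, so $|B(x)| \les \la x\ra^{-(\eps-\eps')}$, and since $\eps'<\eps$ this gives $B\in L^\infty$. Hence $M_B$ is bounded on $L^2$ with the desired norm bound.

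For the commutator term, I would invoke a Kenig--Ponce--Vega style fractional Leibniz/commutator estimate. Specifically, for $0<s<1$ and Hölder exponents $\frac1p = \frac1{p_1}+\frac1{p_2}$, one has
\[
\bigl\| [|\nabla|^{s}, B]\,g \bigr\|_{L^p} \les \bigl\| |\nabla|^{s} B\bigr\|_{L^{p_1}} \|g\|_{L^{p_2}}.
\]
Take $s=\half$, $p=2$, and $p_1 = 2n$; this forces $p_2 = \frac{2n}{n-1}$. By hypothesis \eqref{eq:ass2}, $\||\nabla|^{\half}B\|_{L^{2n}}$ is finite, so
\[
\bigl\| [|\nabla|^{\half}, M_B]\,g \bigr\|_{L^2} \les \|g\|_{L^{2n/(n-1)}}.
\]

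I then apply this to $g = |\nabla|^{-\half} f$. The Hardy--Littlewood--Sobolev inequality (equivalently, Sobolev embedding) gives $|\nabla|^{-\half}\colon L^2 \to L^{2n/(n-1)}$ boundedly in all dimensions $n\ge 3$, since $\frac{n-1}{2n} = \frac12 - \frac{1/2}{n}$. Combining these with the estimate on $M_B$ completes the proof.

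The main technical point is the commutator inequality above; this is standard (see Kenig--Ponce--Vega or Kato--Ponce), and I would expect the authors either to quote it from the literature or to include a self-contained proof via a Littlewood--Paley/paraproduct decomposition in the appendix on harmonic analysis tools. The interplay between the Sobolev regularity of $B$ (which is of precisely the scaling-critical type, since $\frac{1}{2}\cdot 2n = n$) and the dual Sobolev embedding for $|\nabla|^{-\half}$ is what makes the exponent $2n$ in \eqref{eq:ass2} the natural, essentially sharp, choice.
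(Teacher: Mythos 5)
Your proof is correct and follows essentially the same route as the paper: the paper applies its fractional Leibniz rule (Lemma~\ref{lem:Leib}) directly to $|\nabla|^{\half}(\tilde A f)$ with $\tilde A=\la x\ra^{1+\eps'}A$, producing exactly your two terms $\|\tilde A\|_\infty\|\,|\nabla|^{\half}f\|_2$ and $\|\,|\nabla|^{\half}\tilde A\|_{2n}\|f\|_{\frac{2n}{n-1}}$, and then concludes with the same Sobolev embedding $\dot H^{1/2}\hookrightarrow L^{\frac{2n}{n-1}}$. Your commutator splitting with the Kenig--Ponce--Vega estimate is only a cosmetic repackaging of that Leibniz bound, which the paper indeed records in its harmonic-analysis appendix, just as you anticipated.
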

\begin{proof}
This is a straightforward application of the fractional Leibniz rule
in Lemma~\ref{lem:Leib}.
\[\begin{split}
\|\,|\nabla|^{\frac12} \tilde A  f\|_2 &\les \|\tilde A\|_\infty
\|\,|\nabla|^{\frac12} f\|_2 +
\|\,|\nabla|^{\frac12}\tilde A\|_{2n} \|f\|_{\frac{2n}{n-1}}\\
&\les ( \|\tilde A\|_\infty  + \|\,|\nabla|^{\frac12}\tilde A\|_{2n}
)\|\,|\nabla|^{\frac12} f\|_2 \les  \||\nabla|^{\frac12} f\|_2
\end{split}\]
which is equivalent to $|\nabla|^{\frac12} \tilde A
|\nabla|^{-\frac12}\::L^2\to L^2$.
\end{proof}

Returning to our discussion of the decomposition of~$L$,  observe
that
\begin{align*}
Z_1 &= \big(|\nabla|^{\frac12} w |\nabla|^{-\frac12} \la x\ra^{\sigma}\big)
\la x\ra^{-\sigma}|\nabla|^{\frac12} =: S_1  Z_0 \\
Z_2 &= i\big(\nabla|\nabla|^{-\frac12} Aw^{-1} |\nabla|^{-\frac12}
\la x\ra^{\sigma}\big) \la x\ra^{-\sigma}|\nabla|^{\frac12}=: S_2  Z_0 \\
Z_3 &= \big(|V|^{\frac12} |\nabla|^{-\frac12} \la x\ra^{\sigma}\big)
\la x\ra^{-\sigma} |\nabla|^{\frac12} =: S_3  Z_0
\end{align*}
with $S_1$ being $L^2$ bounded by Lemma~\ref{lem:comm} in the appendix.
Similarly, $S_3$ can be expanded as
\[
S_3 = i\big(|V|^{\frac12} w^{-1} |\nabla|^{-\frac12}\big) S_1
\]
and the operator in parentheses is bounded on $L^2$ by fractional integration.
For $S_2$, we need to invoke the local regularity of $A$:
\[
S_2=\nabla|\nabla|^{-\frac12} Aw^{-1}\la x\ra^{(1+\eps')-\tau}
|\nabla|^{-\frac12} \big(|\nabla|^{\frac12} \la x\ra^{\tau-(1+\eps')}
|\nabla|^{-\frac12} \la x\ra^{\sigma}\big),
\]
and the operator in parentheses is again $L^2$ bounded by Lemma~\ref{lem:comm},
whereas, by \eqref{eq:ass2} we can rewrite the remaining expression on the
right-hand side as
\[ \nabla|\nabla|^{-\frac12} A\la x\ra^{1+\eps'} |\nabla|^{-\frac12} =
\sum_{j=1}^n \partial_j|\nabla|^{-1} |\nabla|^{\frac12} A\la x \ra^{1+\eps'}
|\nabla|^{-\frac12}.
\]
The sum here is $L^2$ bounded; indeed, obviously the Riesz transforms
$\partial_j|\nabla|^{-1}$ are $L^2$ bounded and now apply
Lemma~\ref{lem:Amap}. In conclusion it will suffice to prove that
$Z_0$
is $H$-smooth.

Let us first consider intermediate energies $\lambda^2$, i.e.,
$\lambda\in[\lambda_0,\lambda_1]=\calJ_0$ with $\lambda_0$ small and
$\lambda_1$ large. Then it was shown in \cite{IS}, see
also~\cite{Agmon}, that the resolvent of $H$ satisfies the following
bound
\[
\sup_{\lambda\in\calJ_0}\|\la x\ra^{-\sigma}\la \nabla\ra
R_L(\lambda^2+i0) f\|_2 \le C(\lambda_0,\lambda_1)\, \|\la
x\ra^{\sigma} \la \nabla\ra^{-1} f\|_2
\]
(in fact, a stronger bound was proved in \cite{IS}). More precisely,
this bound follows provided there are no eigenvalues of $H$ in the
interval $\calJ_0$.
The latter property (absence of imbedded eigenvalues)
is shown in~\cite{KochT} to hold for the entire family of potentials
under consideration.  It is not difficult to replace the derivative
$\japanese[\nabla]$ with $|\nabla|$, since the operator
$|\nabla|\japanese[\nabla]^{-1}$ is bounded on a wide range of weighted
$L^2$ spaces, see Lemma~\ref{lem:frac}. Thus,
\begin{equation}\nonumber
\sup_{\lambda\in\calJ_0} \|Z_0 R_L(\lambda^2+i0) Z_0^*
\|_{2\to2}
 \le C(\lambda_0,\lambda_1)
\end{equation}
Finally, by Kato's smoothing theory, see \cite{RS4} Theorem~XIII.30,
we conclude that $Z_0$ is $H$-smooth on $\Omega=\calJ_0$ as desired.
In the following two sections we treat the case of large energies,
which takes up the most work.  The small energy case is then treated
in Section~\ref{sec:small}. Finally, in the appendix we collect some
bounds from harmonic analysis. Although they can all be found in the
literature in some form, the specific version required here appears
to be somewhat different.

\section{The Directed Resolvent Estimate}\label{sec:directed}

This section, which can be read independently of the other sections,
presents a limiting absorption estimate for the truncated free
resolvent kernel. The crucial point is that the constants in our
estimate do not depend on the truncation. Our main tool is
H\"ormander's variable coefficient Plancherel theorem from the
appendix.

The kernel of the free resolvent $R_0^+(\lambda^2)$ in $\R^n$ is
given by\footnote{Constants $C_n$ are allowed to change from line to
line.}
\[
R_0^+(\lambda^2)(x,y) = C_n\,
\frac{\lambda^\frac{n-2}{2}}{|x-y|^{\frac{n-2}{2}}}
H_{\frac{n-2}{2}}^+(\lambda|x-y|)
\]
where $H_\nu^+$ is a Hankel function. There is the scaling relation
\begin{equation}   \label{eq:Rscale}
R_0^+(\lambda^2)(x,y)= \lambda^{n-2} R_0^+(1)(\lambda x,\lambda y)
\quad \forall\;\lambda>0
\end{equation}
and the representation, see the asymptotics of $H_\nu^+$
in~\cite{AbSteg},
\begin{equation}
  \label{eq:Hsplit}
  R_0^+(1) (x,y) = \frac{e^{i|x-y|}}{|x-y|^{\frac{n-1}{2}}}
   a(|x-y|) + \frac{b(|x-y|)}{|x-y|^{n-2}}
\end{equation}
provided $n\ge3$ where
\begin{equation}\label{eq:a}
 |a^{(k)}(r)| \les r^{-k} \quad\forall\;k\ge0, \quad a(r)=0 \quad\forall\; 0< r<1
 \end{equation}
and $b(r)=0$ for all $r>2$, with
\begin{align}
&  |b^{(k)}(r)| \les 1 \quad\forall\;k\ge0, \quad n\text{\ \
odd} \label{eq:bodd}\\
&\left. \begin{array}{lll} |b^{(k)}(r)| &\les 1 &
\forall\;0\le k< n-2 \\
|b^{(k)}(r)| &\les r^{n-k-2}|\log r| &\forall\;k\ge n-2
\end{array}\right\}
 \; n\ge 4\text{\ \ even} \label{eq:beven}
\end{align}
for all $r>0$. As in Chapter~XIV of~\cite{Hor} define
\[
\|f\|_{B}:=\sum_{j=0}^\infty 2^{\frac{j}{2}} \|f\|_{L^2(D_j)}, \quad
\|f\|_{B^*} := \sup_{j\ge0} 2^{-\frac{j}{2}} \|f\|_{L^2(D_j)}
\]
where $D_j=\{ x\::\: |x|\sim 2^j\}$ for $j\ge1$ and $D_0=\{|x|\le
1\}$.

\begin{lemma}
  \label{lem:Bscale} For any $\lambda\ge 1$,
  \[
\|f( \lambda^{-1}\cdot) \|_B \les \lambda^{\frac{n+1}{2}}
\|f\|_B,\quad \|g(\lambda \cdot) \|_{B^*} \les
\lambda^{-\frac{n-1}{2}} \|g\|_{B^*}
  \]
provided the right-hand sides are finite.
\end{lemma}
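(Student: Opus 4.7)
The plan is to unpack the definitions and keep careful track of how dilation interacts with the dyadic shells $D_j$. Fix $\lambda\ge 1$ and choose an integer $k_0\ge0$ with $2^{k_0-1}<\lambda\le 2^{k_0}$, so that $2^{k_0/2}\sim \lambda^{1/2}$. Under the dilation $x\mapsto \lambda^{-1}x$, the annulus $D_j$ pulls back to a set of comparable size to $D_{j-k_0}$ whenever $j\ge k_0$, and to a subset of $D_0$ when $j<k_0$; the $L^2$ Jacobian is $\lambda^{n/2}$ in each case.

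For the $B$-estimate, split the defining sum at $j=k_0$. In the high range $j\ge k_0$, a change of variables gives
\[
2^{j/2}\|f(\lambda^{-1}\cdot)\|_{L^2(D_j)}
\;\les\; 2^{j/2}\,\lambda^{n/2}\,\|f\|_{L^2(D_{j-k_0})}
\;=\;\lambda^{(n+1)/2}\cdot 2^{(j-k_0)/2}\|f\|_{L^2(D_{j-k_0})},
\]
using $2^{j/2}=2^{k_0/2}\cdot 2^{(j-k_0)/2}$ and $2^{k_0/2}\sim \lambda^{1/2}$. Summing in $j\ge k_0$ reproduces a shifted copy of $\|f\|_B$, giving the desired $\lambda^{(n+1)/2}\|f\|_B$. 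In the low range $j<k_0$, each contribution is bounded by $\lambda^{n/2}\|f\|_{L^2(D_0)}\le \lambda^{n/2}\|f\|_B$, and the remaining geometric sum $\sum_{j<k_0}2^{j/2}\les 2^{k_0/2}\sim \lambda^{1/2}$ again produces $\lambda^{(n+1)/2}\|f\|_B$.

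For the $B^*$-estimate, dualize the same bookkeeping. For $j\ge 1$, $\lambda D_j$ lies (up to constants) inside $D_{j+k_0}$, so
\[
2^{-j/2}\|g(\lambda\cdot)\|_{L^2(D_j)}
\;=\;2^{-j/2}\lambda^{-n/2}\|g\|_{L^2(\lambda D_j)}
\;\les\; 2^{k_0/2}\lambda^{-n/2}\cdot 2^{-(j+k_0)/2}\|g\|_{L^2(D_{j+k_0})}
\;\les\;\lambda^{-(n-1)/2}\|g\|_{B^*}.
\]
For $j=0$ the set $\lambda D_0$ is the ball of radius $\lambda$, which is contained in the union $\bigcup_{i=0}^{k_0+1}D_i$. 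Since $\|g\|_{L^2(D_i)}^2\le 2^i\|g\|_{B^*}^2$, summing the geometric series gives $\|g\|_{L^2(\lambda D_0)}^2\les 2^{k_0}\|g\|_{B^*}^2$, whence
\[
\|g(\lambda\cdot)\|_{L^2(D_0)}\;\les\;\lambda^{-n/2}\cdot 2^{k_0/2}\|g\|_{B^*}\;\les\;\lambda^{-(n-1)/2}\|g\|_{B^*}.
\]
Taking the supremum over $j$ yields the claim.

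There is no genuine obstacle here — the argument is pure bookkeeping. The only mildly delicate points are the special role of the innermost set $D_0$ (a ball, not an annulus) and the boundary indices where $2^j/\lambda\sim 1$; both are absorbed in the $\les$ constants. The factors $\lambda^{(n+1)/2}$ and $\lambda^{-(n-1)/2}$ arise transparently from pairing the $L^2$-Jacobian $\lambda^{\pm n/2}$ with the single factor of $2^{k_0/2}\sim\lambda^{1/2}$ produced by reindexing the weighted sum.
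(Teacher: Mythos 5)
Your proof is correct and follows essentially the same route as the paper: the $B$ bound via the change of variables with Jacobian $\lambda^{n/2}$, a split of the dyadic sum at $2^{j}\sim\lambda$, and the extra factor $\lambda^{1/2}$ from reindexing. The only difference is cosmetic: the paper deduces the $B^{*}$ estimate from the $B$ estimate by duality, whereas you redo the mirrored shell-by-shell bookkeeping directly (including the correct treatment of the ball $\lambda D_0$ via the geometric sum $\sum_{i\le k_0+1}2^{i}$), which changes nothing of substance.
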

\begin{proof} By duality, it suffices to prove the first estimate.
 Assume without loss of generality that $\lambda=2^N$ for some
$N\ge0$. Then
\begin{align*}
\|f( \lambda^{-1}\cdot) \|_B &\les  \sum_{j=N}^\infty
2^{\frac{j}{2}} 2^{\frac{nN}{2}} \|f\|_{L^2(D_{j-N})} +
2^{\frac{N}{2}} 2^{\frac{nN}{2}} \|f\|_{L^2(D_0)}  \les
2^{\frac{N(n+1)}{2}} \|f\|_B
\end{align*}
as claimed.
\end{proof}

This lemma and the scaling relation \eqref{eq:Rscale} immediately
imply the following statement. In what follows, $R_0$ stands for
either of $R_0^{\pm}$.

\begin{cor}
  \label{cor:R0scale} If $R_0(1):\: B\to B^*$, then
  \[
\|R_0(\lambda^2)\|_{B\to B^*} \les \lambda^{-1} \|R_0(1)\|_{B\to
B^*}
  \]
  for all $\lambda\ge1$.
\end{cor}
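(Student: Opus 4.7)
The plan is a direct change-of-variables computation combined with the two scaling estimates in Lemma~\ref{lem:Bscale}. The key observation is that the scaling identity \eqref{eq:Rscale} says $R_0(\lambda^2)(x,y)=\lambda^{n-2}R_0(1)(\lambda x,\lambda y)$, so up to multiplicative factors the operator $R_0(\lambda^2)$ is conjugate to $R_0(1)$ by the dilations $f\mapsto f(\cdot/\lambda)$ and $g\mapsto g(\lambda\cdot)$. Concretely, substituting $y'=\lambda y$ in the integral gives
\[
(R_0(\lambda^2)f)(x) = \lambda^{-2}\,(R_0(1) g)(\lambda x), \qquad g(y):=f(y/\lambda).
\]

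From here the argument is mechanical. I would first apply the second inequality of Lemma~\ref{lem:Bscale} to the function $h:=R_0(1)g$, giving
\[
\|R_0(\lambda^2)f\|_{B^*} = \lambda^{-2}\|h(\lambda\,\cdot)\|_{B^*} \lesssim \lambda^{-2-\frac{n-1}{2}}\|h\|_{B^*}.
\]
Then, by hypothesis, $\|h\|_{B^*}\le \|R_0(1)\|_{B\to B^*}\|g\|_B$, and the first inequality of Lemma~\ref{lem:Bscale} controls $\|g\|_B=\|f(\cdot/\lambda)\|_B\lesssim \lambda^{\frac{n+1}{2}}\|f\|_B$. Multiplying the exponents, $-2-\frac{n-1}{2}+\frac{n+1}{2}=-1$, which yields exactly the asserted bound.

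There is no real obstacle here; the corollary is essentially a bookkeeping consequence of the scaling relation together with the two dilation estimates for the Agmon--H\"ormander norms already proved in Lemma~\ref{lem:Bscale}. The only small point to record is that one is free to assume $\lambda=2^N$ by the usual dyadic reduction (as in the proof of Lemma~\ref{lem:Bscale}), and that the same argument applies verbatim to $R_0^+$ and $R_0^-$ since \eqref{eq:Rscale} holds for both.
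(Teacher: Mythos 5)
Your proof is correct and is essentially the paper's own argument: both rest on the conjugation identity $(R_0(\lambda^2)f)(x)=\lambda^{-2}(R_0(1)f(\cdot/\lambda))(\lambda x)$ from \eqref{eq:Rscale}, followed by the two dilation estimates of Lemma~\ref{lem:Bscale} applied to the $B^*$ and $B$ norms respectively, with the same exponent bookkeeping $-2-\frac{n-1}{2}+\frac{n+1}{2}=-1$.
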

\begin{proof}
  First, from \eqref{eq:Rscale}
  \[
 (R_0^+(\lambda^2) f)(x) = \lambda^{-2} [R_0^+(1)
 f(\cdot\lambda^{-1})] (\lambda x)
  \]
  Hence, by the previous lemma,
  \[
\|R_0^+(\lambda^2) f\|_{B^*} \les \lambda^{-2}
\lambda^{-\frac{n-1}{2}} \| R_0^+(1)
 f(\cdot\lambda^{-1}) \|_{B^*}
 \les \lambda^{-1} \|R_0^+(1)f\|_{B^*}
  \]
  as claimed.
\end{proof}

For any $\delta\in(0,1)$, let $\Phi_\delta$ be a smooth cut-off
function to a $\delta$-neighborhood of the north pole in $S^{n-1}$.
Also, for any $d\in(0,\infty)$,  $\eta_d(x)=\eta(|x|/d)$ denotes a smooth
cut-off to the set $|x|>d$. In what follows, we shall use the
notation
\[
\calR_{d,\delta}(\lambda^2)(x,y)=[R_0(\lambda^2) \eta_d \Phi_\delta]
(x,y) = R_0(\lambda^2)(x,y) \eta_d(|x-y|)
\Phi_\delta\Big(\frac{x-y}{|x-y|}\Big)
\]
Note that this operator obeys the same scaling as $R_0$,
see~\eqref{eq:Rscale}. More precisely,
\[
\calR_{d,\delta}(\lambda^2)(x,y) = \lambda^{n-2}
\calR_{d\lambda, \delta}(1)(\lambda x,\lambda y)
\]
Thus, Corollary~\ref{cor:R0scale}  applies to
$\calR_{d,\delta}(\lambda^2)$ in the form
\begin{equation}\label{eq:calRscale}
\|\calR_{d,\delta}(\lambda^2)\|_{B\to B^*} \les \lambda^{-1}
\|\calR_{d\lambda, \delta}(1)\|_{B\to B^*}
\end{equation}
for all $\lambda\ge1$ or, more generally,
\begin{equation}\label{eq:dercalRscale}
\| D^{\alpha}\calR_{d,\delta}(\lambda^2)\|_{B\to B^*} \les
\lambda^{-1+|\alpha|}
\|D^{\alpha}\calR_{d\lambda, \delta}(1)\|_{B\to B^*}
\end{equation}
for all multi-indices $\alpha$ and $\lambda\ge1$.

The main goal of this section is to prove a limiting absorption
bound for $\calR_{d,\delta}$ and its derivatives of order at most
two uniformly in the parameters $d,\delta\in(0,1)$, see
Proposition~\ref{prop:dir_res} below. This will be based on the
oscillatory integral estimate in Lemma~\ref{lem:VCP}. We first state
a simple technical fact which will be used repeatedly.

\begin{lemma}
\label{lem:L1L2} Let $K(x,y)$ be the kernel of the $L^2$ bounded
operator $T: L^2(\R^n)\to L^2(\R^m)$ with
\[ (Tf)(x) = \int_{\R^n} K(x,y) f(y)\, dy \]
Let $L_1:\R^n\to\R^n$ and $L_2:\R^m\to \R^m$ be invertible linear
transformations and define
\[ (\tilde T f)(x) = \int_{\R^n} K(L_1 x, L_2y) f(y)\, dy \]
Then
\[ \sqrt{|\det L_1||\det L_2|}\; \|\tilde T\|_{2\to2} =  \|T\|_{2\to2}
\]
\end{lemma}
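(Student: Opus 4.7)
The plan is two linear changes of variable and some tracking of Jacobian factors. First I would substitute $z = L_2 y$ in the integral defining $\tilde{T}f$ to absorb $L_2$; this contributes a factor $|\det L_2|^{-1}$ and rewrites
\[
(\tilde{T}f)(x) = |\det L_2|^{-1}(Tg)(L_1 x), \qquad g := f\circ L_2^{-1},
\]
with $\|g\|_2 = |\det L_2|^{1/2}\|f\|_2$. Second, I would substitute $u = L_1 x$ in the outer $L^2$ integral, which contributes $|\det L_1|^{-1/2}$, so that
\[
\|\tilde{T}f\|_2 = |\det L_2|^{-1}|\det L_1|^{-1/2}\|Tg\|_2.
\]
Taking the supremum over $\|f\|_2 = 1$ is equivalent to taking the supremum of $\|Tg\|_2$ over $\|g\|_2 = |\det L_2|^{1/2}$, and the extra factor of $|\det L_2|^{1/2}$ combines with the powers above to give the claimed identity $\|\tilde{T}\|_{2\to 2} = (|\det L_1||\det L_2|)^{-1/2}\|T\|_{2\to 2}$. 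Equality (rather than only an inequality) is automatic because both substitutions are bijections; alternatively, one can run the argument backward by applying the same procedure to $T$ with $L_1^{-1},L_2^{-1}$ in place of $L_1,L_2$, which recovers $\tilde{T}$ modulo reciprocal Jacobians.

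There is no substantive obstacle here --- the lemma is a scaling identity whose only purpose is to reduce affine-rescaled kernels to a canonical form in the manipulations that follow. The single point requiring attention is that the input-side Jacobian $|\det L_2|$ enters at full strength through the inner integration but is then split into half-powers when one passes from $\|f\circ L_2^{-1}\|_2$ back to $\|f\|_2$, leaving $|\det L_1|$ and $|\det L_2|$ with the same symmetric exponent $-\tfrac12$ in the final answer. This is exactly what produces the combined factor $\sqrt{|\det L_1||\det L_2|}$ on the left-hand side of the claimed identity.
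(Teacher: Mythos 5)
Your proof is correct, and it is the standard change-of-variables argument the authors evidently have in mind: the paper states this lemma as a "simple technical fact" and omits the proof entirely. Your bookkeeping of the Jacobian factors ($|\det L_2|^{-1}$ from the inner substitution, $|\det L_1|^{-1/2}$ from the outer $L^2$ norm, and $|\det L_2|^{1/2}$ recovered from renormalizing $g=f\circ L_2^{-1}$) is exactly right and yields the claimed identity with equality, since $f\mapsto g$ is a bijection of $L^2$.
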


The following lemma is the main technical tool of this section.

\begin{lemma}
  \label{lem:VCP} Let $\chi$  denote a
 smooth cut-off function to the
  region $1<|x|<2$. With $a(r)$ as in~\eqref{eq:a}, define
\begin{equation}\label{eq:pint}
(T_{\delta,p, R_1, R_2} f)(x) = \int \chi\big(\frac{x}{R_1}\big)
\frac{e^{i|x-y|}}{|x-y|^p}
   a(|x-y|)
  \Phi_\delta\Big(\frac{x-y}{|x-y|}\Big) \chi\big(\frac{y}{R_2}\big) f(y)\, dy
\end{equation}
Then, for any $n\ge3$, and
  $\frac{n-1}{2}\le p\le \frac{n+3}{2}$,
\begin{equation}\label{eq:Tbound}
\| T_{\delta,p, R_1, R_2} f \|_2 \le C_n\, \delta^{p-\frac{n-1}{2}}
\sqrt{R_1 R_2}\, \|f\|_2
\end{equation}
  for all $R_1, R_2\ge1$, $\delta\in(0,1)$. The constant $C_n$ only
  depends on $n\ge3$.
\end{lemma}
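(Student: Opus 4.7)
My plan is to apply H\"ormander's variable coefficient Plancherel theorem from the appendix. The key obstacle is that the phase $\Phi(x,y)=|x-y|$ has mixed Hessian $\partial_x\partial_y|x-y|=-|x-y|^{-1}(I-\hat r\hat r^T)$, where $\hat r=(x-y)/|x-y|$. This matrix has rank only $n-1$ because the radial direction $\hat r$ lies in its kernel, so the theorem does not apply directly; a preparatory change of variables is required to isolate the degenerate radial direction.

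First I would use Lemma~\ref{lem:L1L2} to perform an isotropic rescaling $x=R_1\tilde x$ and $y=R_2\tilde y$, normalizing the spatial cutoffs $\chi(x/R_1)\chi(y/R_2)$ to unit scale. The $\sqrt{R_1 R_2}$ factor in the claimed bound is generated by the determinant identity in Lemma~\ref{lem:L1L2} combined with the dimension count in the subsequent H\"ormander gain. Within the cone $\Phi_\delta$ around the north pole, I would then introduce polar-like coordinates centered at $\tilde x$, writing $\tilde y=\tilde x-\rho\,\omega(\delta u)$ with $\omega(\theta)=(\theta,\sqrt{1-|\theta|^2})$ and $|u|\les 1$. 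The Jacobian of the change $\tilde y\mapsto(\rho,u)$ is $(\delta\rho)^{n-1}$; the cone cutoff becomes a unit-scale cutoff in $u$; the phase becomes $\rho\sqrt{1+\delta^2|u|^2}$; and the amplitude factor $|\tilde x-\tilde y|^{-p}$ becomes $\rho^{-p}$ (up to lower-order corrections).

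Having isolated the radial direction $\rho$, the problem reduces to a non-degenerate oscillatory integral in the transverse variables. Concretely, I would either slice into dyadic values of $\rho\in[1,R_1+R_2]$ and apply H\"ormander's theorem to each slice as an operator from $L^2_u(\R^{n-1})$ into $L^2_{\tilde x}(\R^n)$, or perform a Fourier transform in $\tilde x_n$ to convert the trivial $\rho$-dependence of the phase into a Plancherel identity. Either way, the mixed Hessian in the transverse $(n-1)$ directions is non-degenerate uniformly in $\delta\in(0,1)$ (because $\sqrt{1+\delta^2|u|^2}$ has all derivatives bounded on $|u|\les 1$ uniformly in $\delta<1$), so H\"ormander yields an oscillatory gain of order $(R\rho)^{-(n-1)/2}$ in the transverse variables. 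Combining this gain with the Jacobian $\delta^{n-1}\rho^{n-1}$ and integrating the amplitude factor $\rho^{-p}$ over $\rho\in[1,R_1+R_2]$ produces the target exponents $\delta^{p-(n-1)/2}\sqrt{R_1R_2}$; the hypothesis $p\in[(n-1)/2,(n+3)/2]$ is precisely the window in which this balance yields a bounded radial integral.

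The hard part will be verifying the uniformity of the H\"ormander constant in the small parameter $\delta$: after the rescaling $\theta=\delta u$, one must check that the mixed Hessian of $\rho\sqrt{1+\delta^2|u|^2}$ in the appropriate transverse variables remains bounded below uniformly in $\delta$, and that all relevant derivatives of the phase and amplitude are controlled by constants depending only on $n$. This reduces to straightforward calculus because $\delta<1$ keeps all $\delta$-weighted derivatives small, but the bookkeeping must be carried out carefully so that no implicit $\delta$- or $R$-dependent constants leak into the final bound. The endpoint values of $p$ reflect, respectively, the classical free-resolvent scaling ($p=(n-1)/2$) and the borderline at which the quadratic correction $\delta^2|u|^2$ in the phase first becomes relevant ($p=(n+3)/2$).
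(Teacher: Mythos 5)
You have chosen the right tool (Proposition~\ref{prop:vcp}) and the right heuristic (transverse oscillatory gain against the Jacobian $\delta^{n-1}\rho^{n-1}$ and the amplitude $\rho^{-p}$, followed by a radial summation), but two of your steps fail as stated. First, the $x$-centered change of variables $\tilde y=\tilde x-\rho\,\omega(\delta u)$ transfers all of the oscillation into the parametrization: the phase $|x-y|=\rho\sqrt{1+\delta^2|u|^2}$ then contains no $x$ at all, so there is no coupling between the output variable and the integration variables, and H\"ormander's theorem cannot be applied ``from $L^2_u$ into $L^2_{\tilde x}$'': in these coordinates the input is $f\big(\tilde x-\rho\,\omega(\delta u)\big)$, a function of both $\tilde x$ and $u$, so the operator is a superposition of translations and only Minkowski/Schur bounds are available. (Moreover, with the two different rescalings $x=R_1\tilde x$, $y=R_2\tilde y$ the phase is $|R_1\tilde x-R_2\tilde y|$, not a function of $\tilde x-\tilde y$, so the polar parametrization is only coherent when $R_1\sim R_2$.) The paper instead keeps $x$ and $y$ independent, rescales them anisotropically along the cone (cylinders of cross-section $\sim\delta\times$distance, with almost orthogonality), and verifies the hypotheses of Proposition~\ref{prop:vcp} for the phase $\Psi=|\cdot-\cdot|$ in the transverse variables directly. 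Second, the claimed uniform-in-$\delta$ non-degeneracy is false: after the cone-adapted rescaling the transverse curvature of the phase is of size $\delta^2$, so the gain is $\la\lambda\mu\ra^{-\frac{n-1}{2}}$ with $\mu=\delta^2$ and $\lambda\sim|x-y|$, i.e.\ $(|x-y|\,\delta^2)^{-\frac{n-1}{2}}$, not $|x-y|^{-\frac{n-1}{2}}$. The oscillatory estimate therefore only helps when $|x-y|\,\delta^2\ge1$; one needs a Schur-test fallback and then a genuine case analysis of the dyadic radial sum (the paper's \eqref{eq:sumj}), in which the hypotheses $\frac{n-1}{2}\le p\le\frac{n+3}{2}$ enter at several distinct points. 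Your claim that the radial integral is bounded ``precisely in this window'' does not survive once the correct $\delta$-dependent gain is inserted without that bookkeeping.

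There is a further gap in the unbalanced regime $R_1\ll R_2$ (or its dual): there $|x-y|\sim R_2$, and any estimate that uses only the kernel at that scale while ignoring the confinement of $x$ to the much smaller ball $|x|\sim R_1$ gives at best something like $R_2^{\frac{n+1}{2}-p}$, which for $p$ near $\frac{n-1}{2}$ exceeds $\sqrt{R_1R_2}\,\delta^{p-\frac{n-1}{2}}$. The geometric mean $\sqrt{R_1R_2}$, rather than $\max(R_1,R_2)$, is exactly what the $B\to B^*$ application in Proposition~\ref{prop:dir_res} requires, and capturing it is why the paper treats $R_2>4R_1$ separately, with the two sub-cases $\delta R_2>R_1$ (anisotropic rescaling, $\mu=\frac{R_1}{R_2}\delta$) and $\delta R_2\le R_1$ (cylinder decomposition plus almost orthogonality, $\mu=\delta^2$). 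Your proposal would need an analogous mechanism that exploits the $x$-localization before it can produce the stated bound.
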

\begin{proof}
We first consider the cases where $R_2
> 4R_1$ or $R_1>4R_2$. By duality it suffices to treat the first
case. We then distinguish two further cases, depending on whether
$\delta R_2>R_1$ or not.

\medskip {\em Case 1:} $\delta R_2>R_1$

\noindent On the support of the integrand in~\eqref{eq:pint}, we
have
\[
|y'| \les \delta R_2, \; y_n\sim R_2, \; |x|\les R_1
\]
where $y=(y',y_n)$.  By the change of variables $y=(\delta R_2 v',
R_2 v_n)$ and $x=R_1 u$ and Lemma~\ref{lem:L1L2},
\begin{equation}\label{eq:h}
\begin{split}
&\| T_{\delta,p,R_1,R_2}\|_{2\to2} \\
&= \Big\|e^{iR_2\big|\frac{R_1}{R_2} u- ( \delta v', v_n)\big|}\,\, \frac{\chi(u)\chi(\delta
v',v_n)(a\Phi_\delta)(u,v) \, }{\big|\frac{R_1}{R_2} u - (\delta v', v_n)\big|^p}\Big\|_{L^2_v\to L^2_u}
R_1^{\frac{n}{2}} R_2^{\frac{n}{2}-p} \delta^{\frac{n-1}{2}}
\end{split}
\end{equation}
where
\[
(a\Phi_\delta)(u,v) = a(|R_1 u- (R_2 \delta v', R_2 v_n)|) \Phi_\delta \Big( \frac{R_1 u- (R_2 \delta v', R_2
v_n)}{|R_1 u- (R_2 \delta v', R_2 v_n)|}\Big).
\]
We will apply Proposition~\ref{prop:vcp} to the operator in \eqref{eq:h}. First note that the derivatives of
\[
\frac{\chi(u)\chi(\delta v',v_n)(a\Phi_\delta)(u,v)}{\big|\frac{R_1}{R_2} u - (\delta v', v_n)\big|^{p}}
\]
in $u'$ are bounded using the property that $\frac{R_1}{\delta R_2}\les 1$, the symbol-like decay of $a$, the
bounds $|D^\alpha \Phi_\delta|\les \delta^{-|\alpha|}$ and the bound
\begin{equation}
\label{eq:unvn} \big|v_n - \frac{R_1}{R_2} u_n \big| \sim 1.
\end{equation}
Second, the phase $\Psi(u,v)= \big|\frac{R_1}{R_2} u- ( \delta v', v_n)\big|$ satisfies the hypothesis of
Proposition~\ref{prop:vcp}. Indeed,
\[
\nabla_{u'}\Psi(u',u_n,v',v_n) = \frac{R_1}{R_2}\frac{(\frac{R_1}{R_2} u' - \delta v',0)}{|\frac{R_1}{R_2} u- (
\delta v', v_n)\big|} = \frac{R_1}{R_2}\frac{(\frac{R_1}{R_2} u' - \delta v',0)}{|(\frac{R_1}{R_2} u'- \delta
v', \frac{R_1}{R_2}u_n - v_n)\big|}
\]
so that \begin{align*} &\nabla_{u'}\Psi(u',u_n,v',v_n) -
\nabla_{u'}\Psi(u',u_n,w',w_n) \\
&= \frac{R_1}{R_2}\frac{(\frac{R_1}{R_2} u' - \delta v',0)}{|(\frac{R_1}{R_2} u'- \delta v', \frac{R_1}{R_2}u_n
- v_n)\big|} - \frac{R_1}{R_2}\frac{(\frac{R_1}{R_2} u' - \delta w',0)}{|(\frac{R_1}{R_2} u'- \delta w',
\frac{R_1}{R_2}u_n - w_n)\big|}
\end{align*}
Now observe the following: if $x,y\in \R^{k}$, satisfy $|x|,|y|\ll 1$, then
\[\begin{split}
&\Big|\frac{x}{\sqrt{1+|x|^2}} - \frac{y}{\sqrt{1+|y|^2}} \Big| =\frac{|x-y|}{\sqrt{1+|x|^2}} + |y| \,
O\Big(\frac{1}{\sqrt{1+|x|^2}} - \frac{1}{\sqrt{1+|y|^2}} \Big) \\& \sim |x-y|
\end{split}\]
Thus, in view of \eqref{eq:unvn},
\[
|\nabla_{u'}\Psi(u',u_n,v',v_n) - \nabla_{u'}\Psi(u',u_n,w',w_n)| \sim \frac{R_1}{R_2}\delta |v'-w'|
\]
as desired. Moreover, the higher derivatives satisfy
\[
\Big|D^{\beta}_{u'}\Big[\nabla_{u'}\Psi(u',u_n,v',v_n) - \nabla_{u'}\Psi(u',u_n,w',w_n)\Big]\Big| \les
\frac{R_1}{R_2}\delta |v'-w'|
\]
for any $\beta$. In fact, we gain factors of $\frac{R_1}{R_2}$ for the higher derivatives, but this is of no use
to us. Thus, we apply Proposition~\ref{prop:vcp} with $\lambda=R_2$, $\mu= \frac{R_1}{R_2}\delta$, $n_1=n-1$ to
obtain
\[
\| T_{\delta,p,R_1,R_2}\|_{2\to2} \les R_1^{\frac{n}{2}}
R_2^{\frac{n}{2}-p} \delta^{\frac{n-1}{2}} (\delta
R_1)^{-\frac{n-1}{2}}\les \sqrt{R_1R_2}R_2^{\frac{n-1}{2}-p}
\]
which implies the stated bound since $R_2^{-1}\leq \delta/R_1 \leq \delta$.

\medskip {\em Case 2:} $\delta R_2\le  R_1$

Let $\eta$ be a smooth bump function supported in a neighborhood of
the origin such that it defines a partition of unity of $\R^{n-1}$
via
\[
\sum_{k'\in \Z^{n-1}} \eta(x'-k')=1 \quad \forall\; x'\in \R^{n-1}
\]
so that also
\[\sum_{k'\in \Z^{n-1}} \eta\Big(\frac{x'-\delta R_2
k'}{\delta R_2}\Big)=1
\]
This latter partition of unity induces a partition of the $x$ and
$y$ supports in~\eqref{eq:pint} into cylinders of dimensions $\delta
R_2 \times\ldots\times \delta R_2 \times R_1$, and $\delta R_2\times
\ldots\times \delta R_2 \times R_2$, respectively. If $x$ belongs to
a fixed cylinder, then $\Phi_\delta(x,y)\ne0$ implies that $y$
belongs to a finite number of adjacent cylinders, and this number is
uniformly controlled. By almost orthogonality, it suffices to prove
the desired bound for the kernel localized to such cylinders.  After
a translation we can assume that the cylinders are
\[
|x'|\les R_2\delta, \quad |x_n|\les R_1, \qquad |y'|\les
R_2\delta,\quad y_n\sim R_2
\]
Let
\[
(x',x_n) = (R_2\delta u', R_1 u_n), \quad (y',y_n) = (R_2\delta v',
R_2v_n)
\]
By Lemma~\ref{lem:L1L2},
\begin{align*}\begin{split}
& \|T_{\delta,p,R_1,R_2}\|_{2\to2}\\
&\les \delta^{n-1} R_1^{\frac12}R_2^{n-\frac12-p} \Big\|e^{iR_2|(
\delta u',
  \frac{R_1}{R_2} u_n)-(\delta v',v_n)|}
\,\, \frac{\chi(u)\chi(v)(a\Phi_\delta)(u,v)}{\big|
 ( \delta u',
  \frac{R_1}{R_2} u_n\big)-(\delta v',v_n) \big|^{p}}\Big\|_{L_v^2\to L^2_u}
\end{split}\end{align*} where
\begin{equation}\label{eq:aPhishit}\begin{split}
(a\Phi_\delta)(u,v) &= a(|( R_2\delta u',
  R_1 u_n)-(R_2\delta v', R_2v_n)| )\times\\
  &\qquad \times \Phi_\delta \Big(\frac{(R_2 \delta u',
  {R_1} u_n)-(R_2\delta v', R_2v_n) }{ |( R_2\delta u',
  {R_1} u_n)-(R_2\delta v', R_2 v_n)|  } \Big)
  \end{split}
\end{equation}
 On the support of the integrand, $|u|, |v|\les 1 $, and
$v_n\sim1$.  Here the kernel is bounded in absolute value by
$(\frac43)^p \chi(u)\chi(v)$ since $a\Phi_\delta$ is bounded, $v_n \sim 1$,
and $\frac{R_1}{R_2} < \frac14$.  Schur's test gives the immediate bound
\[\|T_{\delta,p,R_1,R_2}\|_{2\to2}
  \les  R_1^{\half}R_2^{n-\half-p}\delta^{n-1}
\]
If $R_2\delta^2 \le 1$, this estimate is sufficient because
$R_2^{n-1-p}\delta^{n-1} \le \delta^{p-\frac{n-1}{2}}$.  The last inequality
is verified in two ways: if
$\frac{n-1}{2} \le p \le n-1$, one can write
\[
R_2^{n-1-p}\delta^{n-1} \le \delta^{-2(n-1-p)} \delta^{n-1}\le
\delta^{2p-(n-1)} \le \delta^{p-\frac{n-1}{2}}
\]
On the other hand, if $p>n-1$, then
\[
R_2^{n-1-p}\delta^{n-1} \le  \delta^{n-1}\le
\delta^{p-\frac{n-1}{2}}
\]
since $p\le \frac{n+3}{2}\le 3\frac{n-1}{2}$ when $n\ge3$.

When $R_2\delta^2 \ge1$, an improved operator estimate can be obtained
via Proposition~\ref{prop:vcp}.
Observe that the $u'$-derivatives of~\eqref{eq:aPhishit} are uniformly bounded. Furthermore, the
same analysis as in the previous case applies to the  phase  \[\Psi(u,v)=|( \delta u',
  \frac{R_1}{R_2} u_n)-(\delta v',v_n)|\]
  with $\mu=\delta^2$, $\lambda=R_2$,
since we still have $|u|\les 1$, $|v|\les 1$, as well as $v_n\sim1$.
Proposition~\ref{prop:vcp} now provides the desired estimate
\[\begin{split}
\|T_{\delta,p,R_1,R_2}\|_{2\to2} &\les \delta^{n-1}
R_1^{\frac12}R_2^{n-\frac12-p} (R_2\delta^2)^{-\frac{n-1}{2}} \\
&= R_2^{\frac{n-1}{2}-p}\sqrt{R_1R_2} \les \delta^{p-\frac{n-1}{2}}
\sqrt{R_1R_2} \end{split}\]
where we have used the condition $p \ge \frac{n-1}{2}$ twice in the last line.

\medskip
Finally, we need to consider the case $R_1\sim R_2\sim R$ where
$R\ge 1$. Let $\|f\|_2\le1$. Then
\begin{align}
  &\| T_{\delta,p,R_1,R_2} f \|_{2} \nonumber\\
& \le \sum_{1\le 2^j\le R} \Big\| \int
e^{iR|x-y|}\,\frac{\chi_j(x,y)
  (a\Phi_\delta)(Rx,Ry) }{|x-y|^p}\,f(Ry)\, dy \Big\|_{L^2_x}
  R^{\frac{3n}{2}-p}\label{eq:2j}
\end{align}
where $\chi_j(x,y) $ is a smooth cut-off function on the set
$\{(x,y):|x|,|y|<1, |x-y|\sim2^{-j}\}$. Performing a Whitney
decomposition of the integrand away from the diagonal $x=y$, we can
estimate~\eqref{eq:2j} by
\begin{align} R^{n-p}\sum_{1\le 2^j\le R} \max_{Q_1^{(j)}\sim
Q_2^{(j)}} \Big\| \chi_{Q_1^{(j)}}(x)
  \frac{e^{iR|x-y|}}{|x-y|^p} \chi_{Q_2^{(j)}}(y)
  (a\Phi_\delta)(Rx,Ry) \Big\|_{L^2_y\to L_x^2} \label{eq:Q1Q2}
  \end{align}
Thus,  $Q_1^{(j)},  Q_2^{(j)}$ are cubes of side length $2^{-j}$ and
$Q_1^{(j)}\sim Q_2^{(j)}$ denotes that they are "related", i.e.,
$\dist(Q_1^{(j)},Q_2^{(j)})\sim 2^{-j}$. Now fix $j$ and cubes
$Q=Q_1^{(j)},Q'= Q_2^{(j)}$. We break $Q$ and $Q'$ into cylinders of
size $2^{-j}\delta\times\ldots \times 2^{-j}\delta \times 2^{-j}$.
Because of the directional cut-off $\Phi_\delta$, each $Q$ cylinder
interacts with at most finitely many $Q'$ cylinders. For one such
pair of cylinders, we can assume (after translation) that
\[
x=(2^{-j}\delta u', 2^{-j} u_n),\qquad y= (2^{-j}\delta v', 2^{-j}
v_n)
\]
where $|u|,|v|\les 1$, $v_n-u_n\sim 1$. By Lemma~\ref{lem:L1L2}
\begin{align} & \Big\| \chi_{Q}(x)
  \frac{e^{iR|x-y|}}{|x-y|^p}\; \chi_{Q'}(y)
  (a\Phi_\delta)(Rx,Ry) \Big\|_{L^2_y\to L_x^2} \nonumber\\
 & \les 2^{j(p-n)} \delta^{n-1} \Big\| e^{iR2^{-j} | (\delta u', u_n) - (\delta v',
 v_n)|}\;
  \frac{\chi(u)\chi(v) (a\Phi_\delta)(u,v)}{|(\delta
  u',u_n)-(\delta v',v_n)|^p}\Big\|_{L_v^2\to L_u^2} \nonumber\\
  &\les 2^{j(p-n)} \delta^{n-1} \min\big(1, (R\delta^2
  2^{-j})^{-\frac{n-1}{2}}\big) \label{eq:finalclaim}
\end{align}
where \[\begin{aligned} (a\Phi_\delta)(u,v) &= a(R2^{-j}|(\delta u',
  u_n)-(\delta v', v_n)| ) \Phi_\delta \Big(\frac{( \delta u',
   u_n)-(\delta v', v_n) }{ |( \delta u',
   u_n)-(\delta v', v_n)|  } \Big)
  \end{aligned}
\]
\eqref{eq:finalclaim} follows from Schur's test and Proposition~\ref{prop:vcp}. For the latter note that the
$u'$ derivatives of $(a\Phi_\delta)(u,v)$ are uniformly bounded on the support of the integrand. Furthermore,
the phase is $\Psi(u,v) = | (\delta u', u_n) - (\delta v', v_n)|$ and we have $|u|,|v|\les 1$, $v_n-u_n\sim 1$.
Thus, as in the previous cases, the proposition applies with $\mu=\delta^2$, $\lambda=R2^{-j}$.

Combining \eqref{eq:2j}, \eqref{eq:Q1Q2}, and \eqref{eq:finalclaim},
yields
\begin{equation}
\|T_{\delta,p,R_1,R_2}\|_{2\to2} \les \sum_{1\le 2^j\le R} R^{n-p}
2^{j(p-n)} \delta^{n-1} \min\big(1, (R\delta^2
  2^{-j})^{-\frac{n-1}{2}}\big) \label{eq:sumj}
\end{equation}
Note that $p<n$ unless $n=3=p$. In that case the right-hand side of
\eqref{eq:sumj} is $\les \delta^2\log R \les R\delta^2$. For the
remainder of the proof, therefore, we may assume $p<n$. First consider the case
$R\delta^2\le 1$ where we have
\[
\eqref{eq:sumj} \les R^{n-p}  \delta^{n-1} = R
\delta^{p-\frac{n-1}{2}} R^{n-1-p}\delta^{3\frac{n-1}{2}-p}\les R
\delta^{p-\frac{n-1}{2}}
\]
To prove the final inequality distinguish the cases $p\ge n-1$ and
$p<n-1$ and note that $p\le \frac{n+3}{2}\le 3\frac{n-1}{2}$.
Henceforth $R\delta^2>1$ and we  distinguish between $R\delta^2 \le
2^j\le R$ and $1\le 2^j\le R\delta^2$. The contribution to the sum
in~\eqref{eq:sumj} by the former is
\[
  R^{n-p} \delta^{n-1} (R\delta^2)^{p-n} = \delta^{2p-(n+1)} = R(R\delta^2)^{-1} \delta^{2(p-(n-1)/2)} \les
R\delta^{p-\frac{n-1}{2}}
\]
since $p\ge \frac{n-1}{2}$. The contribution by $1\le 2^j\le
R\delta^2$ to~\eqref{eq:sumj} is
\begin{equation}
  R^{\frac{n+1}{2}-p}  \sum_{1\le 2^j \le \delta^2 R}
2^{-j(\frac{n+1}{2}-p)} \label{eq:sum2}
\end{equation}
If $\frac{n-1}{2}\le p<\frac{n+1}{2}$, then
\[
\eqref{eq:sum2} \les R^{\frac{n+1}{2}-p} \les R
\delta^{p-\frac{n-1}{2}}
\]
since $R\delta\ge R\delta^2\ge1$. If $p=\frac{n+1}{2}$, then
\[
\eqref{eq:sum2} \les  \log (R\delta^2) \les R \delta^2 \les R\delta
\]
since again $R\delta^2\ge1$. Finally, if $p>\frac{n+1}{2}$, then
\[
\eqref{eq:sum2} \les  \delta^{2p-n-1} \les R\delta^{p-\frac{n-1}{2}}
\]
which concludes the proof.
\end{proof}

\begin{prop}
  \label{prop:dir_res}
  Let $n\ge3$. Then for any $d\in (0,\infty)$, $\delta\in (0,1)$, and
$\lambda\ge 1$ there is the bound
\begin{equation}\label{eq:limap}
\| D^{\alpha}\calR_{d,\delta}(\lambda^2)f \|_{B^*} \le
C_n\lambda^{-1+|\alpha|} \|f\|_B
\end{equation}
for any $0\le|\alpha|\le 2$. The constant $C_n$ depends only on the
dimension $n\ge3$.
\end{prop}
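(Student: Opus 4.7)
The plan is to apply the scaling identity \eqref{eq:dercalRscale} and reduce to $\lambda=1$; it then suffices to show $\|D^{\alpha}\calR_{d,\delta}(1)\|_{B\to B^*}\le C_n$ uniformly in $d\in(0,\infty)$, $\delta\in(0,1)$, and $|\alpha|\le 2$. Unwinding the definitions of $B$ and $B^*$, this will follow from the dyadic bound
\begin{equation}\label{eq:dyblock_plan}
\|\chi_{D_{j_1}}\, D^{\alpha}\calR_{d,\delta}(1)\,\chi_{D_{j_2}}\|_{2\to 2} \les 2^{(j_1+j_2)/2}
\end{equation}
uniform in $j_1,j_2\ge 0$, since then $\|\calR f\|_{B^*}=\sup_{j_1}2^{-j_1/2}\|\chi_{D_{j_1}}\calR f\|_2\le \sup_{j_1}\sum_{j_2}2^{-j_1/2}\cdot 2^{(j_1+j_2)/2}\|\chi_{D_{j_2}}f\|_2 = \|f\|_B$. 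The exact $\sqrt{R_1 R_2}$ form of the bound in Lemma~\ref{lem:VCP} is thus tailored perfectly to the $B\to B^*$ weights.

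To establish \eqref{eq:dyblock_plan}, split $R_0^+(1) = K_{\mathrm{osc}}+K_{\mathrm{loc}}$ via \eqref{eq:Hsplit}. For the oscillatory piece $K_{\mathrm{osc}}$, expand $D^\alpha(K_{\mathrm{osc}}\,\eta_d\,\Phi_\delta)$ by the product rule: each derivative falls either on the phase $e^{i|x-y|}$ (producing only a bounded unit-vector factor), on $|x-y|^{-(n-1)/2}a(|x-y|)$ (improving decay by $|x-y|^{-1}$ while preserving the symbol bound \eqref{eq:a}), on $\Phi_\delta$ (yielding a factor $\delta^{-1}|x-y|^{-1}$ from the angular derivative composed with the spherical-projection Jacobian), or on $\eta_d$ (which is inactive when $d<1$ because $a\equiv 0$ on $\{r\le 1\}$, and which for $d\ge 1$ contributes a bump of size $d^{-1}$ on $|x-y|\sim d$). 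Every resulting summand fits the template of Lemma~\ref{lem:VCP} with some exponent $p\in[\tfrac{n-1}{2},\tfrac{n+3}{2}]$ and a new amplitude still obeying \eqref{eq:a}, and Lemma~\ref{lem:VCP} applied with $R_i\sim 2^{j_i}$ delivers the bound $\delta^{p-(n-1)/2}\sqrt{R_1 R_2}$. The gain $\delta^{p-(n-1)/2}$ precisely cancels the $\delta^{-1}$ loss from each derivative landing on $\Phi_\delta$, since each such derivative raises $p$ by exactly one, yielding \eqref{eq:dyblock_plan} independently of $\delta$. Dyadic blocks involving $D_0$ are handled analogously, with the annular cutoff of Lemma~\ref{lem:VCP} replaced by a ball cutoff.

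The short-range piece $K_{\mathrm{loc}}$ vanishes unless $d\le 2$, being supported in $\{|x-y|\le 2\}$. In that regime \eqref{eq:bodd}--\eqref{eq:beven} together with the chain rule give $|D^\alpha(\eta_d K_{\mathrm{loc}})(x-y)|\les|x-y|^{-(n-2+|\alpha|)}$ up to logarithmic corrections in even dimensions, with contributions from derivatives of $\eta_d$ localized to $|x-y|\sim d$ obeying the same pointwise estimate. Compact support in $|x-y|$ forces the induced operator to be nonzero only between dyadic blocks with $j_1,j_2=O(1)$, where \eqref{eq:dyblock_plan} reduces to bounding an operator with bounded-support kernel; this is either fed into Lemma~\ref{lem:VCP} at $p=\tfrac{n+3}{2}$ (after multiplying by the harmless unimodular factor $e^{i|x-y|}$) or handled by Schur's test, exploiting the $\delta^{n-1}$ measure of the $\delta$-cone intersected with $\{|x-y|\le 2\}$. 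The main obstacle is the derivative bookkeeping that justifies the above cancellation of $\delta^{-1}$ losses against $\delta^{p-(n-1)/2}$ gains in Lemma~\ref{lem:VCP}; the structural fact making this work is that each angular derivative of $\Phi_\delta$ produces not only a $\delta^{-1}$ factor but also a $|x-y|^{-1}$ factor, raising $p$ by exactly one, while the vanishing of $a$ on $\{r\le 1\}$ decouples the oscillatory piece from the $\eta_d$ cutoff in the small-$d$ regime.
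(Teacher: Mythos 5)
Your reduction to the block estimate $\|\chi_{D_{j_1}}D^\alpha\calR_{d,\delta}(1)\chi_{D_{j_2}}\|_{2\to2}\les 2^{(j_1+j_2)/2}$ and your treatment of the oscillatory piece coincide with the paper's proof: the Leibniz expansion in which each derivative on $\Phi_\delta$ costs $\delta^{-1}|x-y|^{-1}$, raises $p$ by one, and is repaid by the factor $\delta^{p-\frac{n-1}{2}}$ in Lemma~\ref{lem:VCP}, with $\eta_d a$ absorbed into an amplitude satisfying \eqref{eq:a} uniformly in $d$, is exactly \eqref{eq:newT1}. The gap is in your treatment of the local piece $T_0$ (the $b$-part of \eqref{eq:Hsplit}). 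First, a small error: support in $\{|x-y|\le 2\}$ only forces $|j_1-j_2|=O(1)$, not $j_1,j_2=O(1)$; this alone is harmless, since any $d,\delta$-uniform $L^2$ bound for $D^\alpha T_0$ would suffice. The real problem is that neither of your two proposed tools produces such a bound when $|\alpha|=2$. The pointwise estimate $|D^\alpha(\eta_d b\,|x-y|^{2-n}\Phi_\delta)|\les |x-y|^{-(n-2+|\alpha|)}$ gives, at $|\alpha|=2$, a kernel of size $|x-y|^{-n}$ on $d\le|x-y|\le 2$ inside the cone; Schur's test then yields $\delta^{n-1}\log(2/d)$ (worse once the $\delta^{-|\beta|}$ losses from angular derivatives are included), which is not uniform in $d\in(0,\infty)$ as $d\to0$ --- the non-integrability of $|x-y|^{-n}$ at the diagonal cannot be beaten by pointwise bounds. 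Feeding the local kernel into Lemma~\ref{lem:VCP} is also not legitimate: the lemma's amplitude must vanish for $r<1$ (its proof uses $|x-y|\ges1$ throughout, e.g.\ in \eqref{eq:unvn} and in the Whitney decomposition restricted to $1\le 2^j\le R$), whereas $T_0$ lives near the diagonal; moreover the exponent $n-2+|\alpha|$ exceeds the admissible range $\frac{n+3}{2}$ for $n\ge4$, $|\alpha|=2$.

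What is missing is the cancellation argument the paper uses instead: $T_0$ is a convolution operator, and one shows its symbol obeys $|m_0(\xi)|\les\la\xi\ra^{-2}$ uniformly in $d$ and $\delta$, by writing $m_0$ as a radial oscillatory integral against the angular cutoff and integrating by parts twice in $r$ (with a separate change of variables when $\xi$ is nearly orthogonal to the cone direction, producing the $\delta^{n-1}$ and $\delta^{n-3}$ factors). Then $\xi^\alpha m_0(\xi)$ is bounded for $|\alpha|\le2$, so $D^\alpha T_0$ is uniformly $L^2$-bounded and the block bound is immediate. Some argument of this type, exploiting oscillation rather than absolute values of the kernel, is needed to close the $|\alpha|=2$ case of the local part; as written, your proof of that case fails.
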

\begin{proof} In view of \eqref{eq:calRscale} and \eqref{eq:dercalRscale}
it suffices to prove these estimates for $\lambda=1$.  We need to
prove that for any $0\le|\alpha|\le 2$
\begin{equation}\label{eq:star}
\| \chi(\cdot/R_1) D^\alpha \calR_{d,\delta}(1)\, \chi(\cdot/R_2) f
\|_2 \le C_n\, \sqrt{R_1 R_2}\, \|f\|_2
\end{equation}
where $R_1, R_2\ge 1$ are arbitrary.  We write
\begin{equation}\label{eq:T0T1}
\calR_{d,\delta}(1)=R_0^+(1) \eta_d\Phi_\delta   = T_0 + T_1
\end{equation}
where the kernels of $T_0, T_1$ are
\begin{equation}\label{eq:Tdef}\begin{split}
 T_0(x,y) &=
 \frac{b(|x-y|)}{|x-y|^{n-2}}
   \eta_d(|x-y|) \Phi_\delta(x,y)\\
T_1(x,y) &= \frac{e^{i|x-y|}}{|x-y|^{\frac{n-1}{2}}}
   \eta_d(|x-y|)a(|x-y|)
  \Phi_\delta(x,y),
  \end{split}
\end{equation}
respectively, see \eqref{eq:Hsplit}. The modified function $\eta_d(r)a(r)$
satisfies all decay estimates in~\eqref{eq:a} with constants independent of
the choice of $d$.

We begin by showing that
$\widehat{T_0 f} = m_0 \hat{f}$ where $|m_0(\xi)|\les \la
\xi\ra^{-2}$. This will imply~\eqref{eq:star} for~$T_0$. By
definition
\[
m_0(\xi) = \int_0^\infty\int_{S^{n-1}} rb(r)  \eta_d(r) e^{-ir
\omega\cdot\xi} \Phi_\delta(\omega) \,\sigma(d\omega) \,dr
\]
Since $b(r)=0$ if $r>2$, $|m_0(\xi)|\les 1$. Hence we may assume
that $|\xi|\ge1$.  If $|\xi_n| \ge |\xi|/10$, then $|\omega \cdot \xi| \ges
|\xi|$ and
\[
|m_0(\xi)|\les \int_{S^{n-1}} \Phi_\delta(\omega) \la
\omega\cdot\xi\ra^{-2}\,\sigma(d\omega)\les \delta^{n-1} |\xi|^{-2}
\]
where we have used that
\[
\Big| \int_0^\infty e^{-ir\rho} rb(r)\eta_d(r)\chi(r)\,dr\Big |\les
\la \rho\ra^{-2}
\]
This follows from \eqref{eq:bodd} and \eqref{eq:beven} after two
integrations by parts. Now suppose that $|\xi_n| \le
|\xi|/10$. Set $\xi=|\xi| \hat{\xi}$ and change integration
variables as follows:
\begin{align*}
&\int_{S^{n-1}}\int_0^\infty rb(r)  \eta_d(r) \chi(r) e^{-ir|\xi|
\omega\cdot\hat{\xi}}\,dr\, \Phi_\delta(\omega) \,\sigma(d\omega)\\
& = \int_{\R^{n-1}} \int_0^\infty rb(r)  \eta_d(r) \chi(r)
e^{-ir|\xi| u_1}\,dr \,\tilde \Phi_\delta(u_1,\ldots,u_{n-1})\,
du_1du_2\ldots du_{n-1}\\
& = \delta^{n-2} \int_0^\infty \int_\R rb(r)  \eta_d(r) \chi(r)
e^{-ir|\xi| u_1}\Psi_\delta(u_1)\, du_1 dr,
\end{align*}
where $(u_1, \ldots, u_{n-1})$ is a parametrization of the support of
$\Phi_\delta$, aligning $u_1$ with $\hat{\xi}$.  The function $\Psi_\delta$
is a smooth cut-off supported on an interval of
length $\sim \delta$ resulting from the integration of $\tilde{\Phi}_\delta$.
Thus,
\[
|m_0(\xi)|\les \delta^{n-2}\int_0^1
r|\widehat{\Psi_\delta}(r|\xi|)|\,dr\les
\delta^{n-2}|\xi|^{-2}\|u\widehat{\Psi_\delta}(u)\|_{L^1_u}\les
\delta^{n-3}|\xi|^{-2}.
\]
In conclusion, $|m_0(\xi)|\les \la \xi\ra^{-2}$ as claimed.

Next, consider $T_1$.  By the Leibniz rule,
\begin{align}
D^\alpha_x T_1(x,y) &= \sum_{\beta\le \alpha} c_{\alpha,\beta}\,
D^{\alpha-\beta}_x \Big[\frac{e^{i|x-y|}}{|x-y|^{\frac{n-1}{2}}}
  \eta_d(|x-y|) a(|x-y|)\Big] D^{\beta}_x
  \Phi_\delta(x,y) \nonumber\\
  & = \sum_{\beta\le \alpha} \delta^{-|\beta|}\,c_{\alpha,\beta}\,
\frac{e^{i|x-y|}}{|x-y|^{\frac{n-1}{2}+|\beta|}}
  a_{\alpha,\beta, d}(|x-y|)
  \Phi_{\delta,\beta}(x,y)\label{eq:newT1}
\end{align}
where $\Phi_{\delta,\beta}= \delta^{|\beta|} D^\beta \Phi_\delta$ is a
modified angular cut-off and $a_{\alpha,\beta, d}$ satisfies the same bounds
as $a$, see~\eqref{eq:a}, with constants that do not depend on $d$.
The estimate~\eqref{eq:star} for $T_1$ follows from Lemma~\ref{lem:VCP}
with $p=\frac{n-1}{2}+|\beta|$.
\end{proof}

For any $\lambda\ge1$ define
\begin{align*} X^*_\lambda &:= \{ f\in
B^*\::\: \la \nabla\ra f\in B^*\} \\
 \|f\|_{X^*_\lambda} &:=
\|f\|_{B^*} + \lambda^{-1} \|\la \nabla\ra f\|_{B^*}
\end{align*}
The dual norm is
\[
\|f\|_{X_\lambda} := \inf_{f=f_1+f_2} \Big(\|f_1\|_{B} + \lambda
\|\la \nabla\ra^{-1} f_2\|_{B}\Big)
\]

\begin{cor} \label{cor:R_0}
Let $\calR_{d,\delta}$ be as above.  Then for all $\lambda \ge 1$
\begin{equation} \label{eq:R_0}
\| \calR_{d,\delta}(\lambda^2)f\|_{X^*_\lambda} \le C_n \lambda^{-1}
\|f\|_{X_\lambda}
\end{equation}
uniformly in $d \in (0,\infty)$, $\delta\in [0,1]$.
\end{cor}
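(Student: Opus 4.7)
The plan is to unpack the definitions of $X^*_\lambda$ and $X_\lambda$ and reduce the bound to mapping estimates that follow directly from Proposition~\ref{prop:dir_res}. By linearity and the definition of $\|\cdot\|_{X_\lambda}$ as an infimum over splittings $f = f_1 + f_2$ with $f_2 = \la\nabla\ra g$, it suffices to prove the four operator bounds
\begin{align*}
\|\calR_{d,\delta}(\lambda^2)\|_{B\to B^*} &\lesssim \lambda^{-1}, &
 \|\la\nabla\ra \calR_{d,\delta}(\lambda^2)\|_{B\to B^*} &\lesssim 1, \\
\|\calR_{d,\delta}(\lambda^2)\la\nabla\ra\|_{B\to B^*} &\lesssim 1, &
 \|\la\nabla\ra\calR_{d,\delta}(\lambda^2)\la\nabla\ra\|_{B\to B^*} &\lesssim \lambda.
\end{align*}
Given these, the target $\|\calR_{d,\delta}(\lambda^2) f\|_{X^*_\lambda}\le C_n\lambda^{-1}\bigl(\|f_1\|_B + \lambda\|g\|_B\bigr)$ follows by distributing $\calR_{d,\delta}(\lambda^2)$ over the splitting and matching powers of $\lambda$.

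The crucial structural observation is that $\calR_{d,\delta}(\lambda^2)$ is a pure convolution operator: its kernel depends only on $x-y$ (both the Hankel factor and the cut-offs $\eta_d(|x-y|)\Phi_\delta((x-y)/|x-y|)$ are translation invariant), and therefore it commutes with every Fourier multiplier, in particular with $\la\nabla\ra$ and with each $\partial_j$. Thus the third of the four bounds coincides with the second, and the fourth becomes
\[
\|\la\nabla\ra^2 \calR_{d,\delta}(\lambda^2)\|_{B\to B^*} = \|(1-\Delta)\calR_{d,\delta}(\lambda^2)\|_{B\to B^*},
\]
which is controlled by the triangle inequality together with Proposition~\ref{prop:dir_res} at $|\alpha|=0$ (yielding $\lambda^{-1}$) and at $|\alpha|=2$ (yielding $\lambda$), summing to $\lesssim \lambda$. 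The first bound is Proposition~\ref{prop:dir_res} at $|\alpha|=0$.

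The remaining task is the second bound $\|\la\nabla\ra\calR_{d,\delta}(\lambda^2)\|_{B\to B^*}\lesssim 1$, which requires passing from the Fourier multiplier $\la\nabla\ra$ down to first-order derivatives. Decomposing the symbol as $\la\xi\ra = m_0(\xi) + \sum_{j=1}^n m_j(\xi)\cdot i\xi_j$, with $m_0$ a smooth cut-off to $\{|\xi|\lesssim 1\}$ and the $m_j$ of Riesz type supported in $\{|\xi|\gtrsim 1\}$, one has $\la\nabla\ra = m_0(\nabla) + \sum_j m_j(\nabla)\partial_j$, so combined with Proposition~\ref{prop:dir_res} at $|\alpha|=0,1$,
\[
\|\la\nabla\ra \calR_{d,\delta}(\lambda^2)h\|_{B^*} \lesssim \|\calR_{d,\delta}(\lambda^2)h\|_{B^*} + \sum_j \|\partial_j\calR_{d,\delta}(\lambda^2)h\|_{B^*} \lesssim (\lambda^{-1}+1)\|h\|_B \lesssim \|h\|_B.
\]
The main obstacle is the harmonic-analysis input that smooth, Mihlin-type Fourier multipliers such as $m_0$ and the $m_j$ are bounded on $B^*$. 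This is standard on the Agmon--H\"ormander $L^2$-scale (and analogous to the kind of multiplier facts used in Lemma~\ref{lem:frac}) but it is the only ingredient outside Proposition~\ref{prop:dir_res} itself; once it is granted, the corollary reduces to the distribution and book-keeping described above.
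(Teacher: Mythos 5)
Your proof is correct and follows essentially the same route as the paper: reduce everything to Proposition~\ref{prop:dir_res} with $0\le|\alpha|\le2$ together with the fact that Fourier multipliers with bounded symbol derivatives act boundedly on $B^*$, which yields $\|\la\nabla\ra h\|_{B^*}\les\|h\|_{B^*}+\|\nabla h\|_{B^*}$ and lets one trade $\la\nabla\ra$, $\la\nabla\ra^2$ for $D^\alpha$ with $|\alpha|\le 2$. The one external ingredient you grant (multiplier boundedness on $B^*$) is exactly what the paper supplies by citing Corollary~14.1.5 of H\"ormander, so no gap remains.
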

\begin{proof}
 This follows from Proposition~\ref{prop:dir_res} provided the
 estimate
 \[
\|\la \nabla\ra f\|_{B^*} \les \|f\|_{B^*} + \|\nabla f\|_{B^*}
 \]
 holds. This in turn will follow if we can show that
 $\|(m\hat{f})^{\vee}\|_{B^*}\les \|f\|_{B^*}$ for any
 symbol $m$ with bounded derivatives. However, this is
 guaranteed by Corollary~14.1.5 in~\cite{Hor}.
\end{proof}

\section{The high energies limiting absorption principle}
\label{sec:high_limap}

The main result of this section is a limiting absorption principle
for the perturbed resolvent
\begin{equation}\label{eq:res_iden}
R_L^+(\lambda^2) = (I + R_0^+(\lambda^2)L)^{-1} R_0^+(\lambda^2)
\end{equation}
where $L = i(\nabla \cdot A + A \cdot \nabla) + V$, see
Proposition~\ref{prop:highen} below. As before, we shall mostly drop
the superscript $+$ on the resolvent. We shall assume throughout
this section that $A,V\in Y$ where
\[ Y:=\Big\{ f\in L^\infty\::\: \sum_{j=0}^\infty 2^j \|f\|_{L^\infty(D_j)}<\infty\Big\}
\]
This is the space of functions that take $B^*\to B$ by
multiplication.

\begin{lemma} \label{lem:L}
For any $\lambda\ge1$
\begin{equation} \label{eq:L}
\norm[Lf][X_\lambda] \le C_n (\lambda \|A\|_Y +
\|V\|_Y)\norm[f][X^*_\lambda]
\end{equation}
\end{lemma}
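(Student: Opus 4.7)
The plan is to split $Lf$ into two summands matching the two-term structure of the $X_\lambda$ norm. Using $i(A\cdot\nabla + \nabla\cdot A)f = iA\cdot\nabla f + i\nabla\cdot(Af)$, I would set
\[
f_1 := Vf + iA\cdot\nabla f, \qquad f_2 := i\nabla\cdot(Af),
\]
so that $Lf = f_1 + f_2$ and hence $\|Lf\|_{X_\lambda} \le \|f_1\|_B + \lambda\|\langle\nabla\rangle^{-1}f_2\|_B$.

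For the multiplication estimates I would use the defining property of $Y$, namely that multiplication by any $g\in Y$ sends $B^*\to B$ with norm at most $\|g\|_Y$; this is immediate from the dyadic rearrangement
\[
\|gh\|_B = \sum_j 2^{j/2}\|gh\|_{L^2(D_j)} \le \sum_j 2^j \|g\|_{L^\infty(D_j)}\cdot 2^{-j/2}\|h\|_{L^2(D_j)} \le \|g\|_Y\|h\|_{B^*}.
\]
Combined with boundedness of $\partial_j\langle\nabla\rangle^{-1}$ on $B^*$ (its symbol $i\xi_j/\langle\xi\rangle$ has bounded derivatives of all orders, so Corollary~14.1.5 of~\cite{Hor} applies, as in the proof of Corollary~\ref{cor:R_0}), this gives
\[
\|A\cdot\nabla f\|_B \le \|A\|_Y\|\nabla f\|_{B^*} \les \|A\|_Y\|\langle\nabla\rangle f\|_{B^*} \le \lambda\|A\|_Y\|f\|_{X^*_\lambda}
\]
and $\|Vf\|_B \le \|V\|_Y\|f\|_{B^*} \le \|V\|_Y\|f\|_{X^*_\lambda}$, whence $\|f_1\|_B \les (\lambda\|A\|_Y + \|V\|_Y)\|f\|_{X^*_\lambda}$.

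By duality the same Hörmander corollary makes $\langle\nabla\rangle^{-1}\partial_j$ bounded on $B$, and therefore
\[
\lambda\|\langle\nabla\rangle^{-1}f_2\|_B = \lambda\|\langle\nabla\rangle^{-1}\nabla\cdot(Af)\|_B \les \lambda\|Af\|_B \le \lambda\|A\|_Y\|f\|_{X^*_\lambda},
\]
which combined with the previous bound yields \eqref{eq:L}. There is no serious obstacle here; the only real choice is the asymmetric placement of $A\cdot\nabla$ and $\nabla\cdot A$, which is dictated by the fact that $Y$ encodes decay but not regularity of $A$. The outer derivative in $\nabla\cdot(Af)$ must remain outside so that $\langle\nabla\rangle^{-1}\nabla$ can absorb it, while in $A\cdot\nabla f$ the derivative must act on $f$ so as to be absorbed by the $\langle\nabla\rangle$ half of $\|f\|_{X^*_\lambda}$; attempting to move derivatives across $A$ in either term would require a commutator estimate that $Y$ does not supply.
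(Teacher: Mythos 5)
Your proposal is correct and follows essentially the paper's own argument: the multiplication property of $Y$ (taking $B^*\to B$) handles $V$ and the $A$ factors, and Corollary~14.1.5 of~\cite{Hor} supplies boundedness of $\la\nabla\ra^{-1}\nabla$ so that $\nabla\cdot(Af)$ can be placed in the second slot of the $X_\lambda$ norm at cost $\lambda\|Af\|_B$. The only (harmless) deviation is that you estimate the term $A\cdot\nabla f$ directly, using the $B^*$-boundedness of $\nabla\la\nabla\ra^{-1}$ together with the $\lambda^{-1}\|\la\nabla\ra f\|_{B^*}$ component of $\|f\|_{X^*_\lambda}$, whereas the paper dispatches that term by duality from the bound already proved for $\nabla\cdot A:X^*_\lambda\to X_\lambda$.
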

\begin{proof}
Multiplication by $V$ is bounded $B^*\to B$. Also,
\[
\| \la \nabla\ra^{-1} \nabla A f\|_{B} \les \| Af\|_{B}\le
\|A\|_Y\|f\|_{B^*}
\]
where the first inequality follows  from Corollary~14.1.5
in~\cite{Hor}. Hence $\nabla\cdot A:X^*_\lambda\to X_\lambda$ with
norm $\les\lambda$. By duality the same holds for $A\cdot\nabla$.
\end{proof}

From this and Corollary~\ref{cor:R_0} it follows that
$$\norm[(I + R_0(\lambda^2)L)^{-1}f][X^*_\lambda] \le 2 \norm[f][X^*_\lambda]$$
for all $\lambda\ge 1$ provided $A$ is small in~$Y$.

The main goal of this section is to show that even when $A$ is not
small the Neumann series
\begin{equation}\label{eq:geomser}
(I + R_0(\lambda^2)L)^{-1} = \sum_{\ell=0}^\infty (-1)^\ell
(R_0(\lambda^2)L)^\ell
\end{equation}
converges for large $\lambda$. This cannot be deduced from the size
of $R_0(\lambda^2)L$ alone, but is instead a consequence of the
following crucial lemma.

\begin{lemma} \label{lem:largem}
Assume that $A, V \in Y$, with $A$ also being continuous.
Given any constant $c>0$, there
exist sufficiently large $m = m(c,A,V)$ and $\lambda_1 =
\lambda_1(c,A,V)$ such that
\begin{equation}  \label{eq:smallprod}
  \sup_{\lambda>\lambda_1}\| (R_0(\lambda^2) L)^m \|_{X^*_\lambda \to X^*_\lambda} \le c
\end{equation}
More generally, given any $r > 0$, there exist sufficiently large $m = m(r,A,V)$ and
$\lambda_1(r,A,V)$ such that
\[
\sup_{\lambda>\lambda_1}\| (R_0(\lambda^2) L)^m \|_{X^*_\lambda \to X^*_\lambda} \le
(2r)^m \]
\end{lemma}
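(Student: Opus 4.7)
The strategy is to turn the merely uniform boundedness of $R_0(\lambda^2) L$ on $X^*_\lambda$, which follows from Corollary~\ref{cor:R_0} and Lemma~\ref{lem:L}, into smallness of its $m$-th iterate by exploiting cancellations invisible at the level of a single factor. Since the bound $\|R_0(\lambda^2) L\|_{X^*_\lambda\to X^*_\lambda} \les 1$ is sharp, the decay has to be extracted from the combinatorial interplay between successive factors in the product.

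First I would reduce to the case of Schwartz $A$ and $V$. Because $A \in C(\R^n) \cap Y$, for every $\kappa > 0$ one can find Schwartz $A_\kappa, V_\kappa$ with $\|A - A_\kappa\|_Y + \|V - V_\kappa\|_Y < \kappa$; this reduction is exactly the place where the continuity hypothesis is used, since a general element of $Y$ need not admit such an approximation. Writing $L = L_\kappa + (L - L_\kappa)$ and expanding $(R_0 L)^m$ into $2^m$ terms indexed by subsets of $\{1, \ldots, m\}$, any term containing at least one factor of $(L - L_\kappa)$ contributes at most $C \kappa \, C_0^{m-1}$, where $C_0$ is the uniform bound on $R_0 L$. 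Choosing $\kappa$ small as a function of $m$ at the end, matters reduce to estimating $(R_0 L_\kappa)^m$ with Schwartz $L_\kappa$.

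For Schwartz $L_\kappa$, fix a smooth angular partition of unity $\{\Phi_{\delta, \alpha}\}_\alpha$ on $S^{n-1}$ consisting of $\sim \delta^{-(n-1)}$ cones of aperture $\delta$ and decompose $R_0(\lambda^2) = \sum_\alpha \calR_{0,\delta,\alpha}(\lambda^2)$ in the notation of Section~\ref{sec:directed}, with the distance cut-off $d$ sent to $0$ (its role being absorbed by the uniform bound of Proposition~\ref{prop:dir_res}). Substituting into each of the $m$ free resolvents produces a sum over cone tuples $\vec\alpha = (\alpha_1, \ldots, \alpha_m)$ of operators
\[
\calR_{0,\delta,\alpha_1} L_\kappa \cdots \calR_{0,\delta,\alpha_m} L_\kappa,
\]
each of which is uniformly bounded on $X^*_\lambda$ by Proposition~\ref{prop:dir_res} and Lemma~\ref{lem:L}. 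I would now split the tuples into \emph{directed} chains, in which every $\alpha_j$ lies within a common enlarged cone of fixed aperture, and \emph{undirected} ones. For a directed chain the oscillatory phases $e^{i\lambda |x_j - x_{j+1}|}$ accumulate along a single axis, and the Schwartz decay of $A_\kappa, V_\kappa$ confines every intermediate integration to a ball of fixed radius $R_\kappa$, so the $m$-fold composition acquires a Volterra-type gain of order $R_\kappa^m/m!$, which beats the $O(\delta^{-(n-1)})$ directional prefactor once $m$ is large. For an undirected chain some consecutive pair $(\alpha_j, \alpha_{j+1})$ is angularly separated by $\ges \delta$; combined with the compact essential support of $L_\kappa$, this produces a numerical gain of a factor $r$ per such gap, and summation over undirected chains is controlled by $r^m$ up to a fixed constant.

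The main obstacle will be the rigorous implementation of the Volterra argument: one must track how the aligned oscillation and the Schwartz localization combine across $m$ successive kernels to produce the claimed factorial decay, despite every individual factor being of order one — this requires reworking the stationary-phase estimates in the spirit of Lemma~\ref{lem:VCP} with the additional bookkeeping of an iterated one-dimensional propagation variable. Once this is in hand, choosing in sequence $\kappa$ small, then $\delta$ small, then $m$ large, and finally $\lambda_1$ large yields the quantitative bound $\|(R_0(\lambda^2) L)^m\|_{X^*_\lambda\to X^*_\lambda} \le (2r)^m$ uniformly in $\lambda > \lambda_1$. The first assertion of the lemma then follows by taking any $r < \half$ and choosing $m$ sufficiently large that $(2r)^m \le c$.
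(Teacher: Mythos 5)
There is a genuine gap, and it sits exactly where you flag ``the main obstacle'': the directed-product estimate. You replace the paper's mechanism by a claimed Volterra-type gain $R_\kappa^m/m!$, but nothing in the proposal substantiates it, and in the paper the Volterra analogy is only a heuristic remark. The actual gain for directed products is a support-translation argument that essentially requires keeping the short-range cutoff $d>0$, which you send to $0$. Each conically restricted resolvent $\calR_{d,\delta}$ (cone of aperture $\delta\le\frac{1}{20m}$ about, say, the north pole, kernel supported where $|x-y|>d$) maps a function supported in $\{x_n>a\}$ to one supported in $\{x_n>a+\frac23 d\}$; after roughly $3R/(2d)$ steps the support lies in $\{x_n>R\}$, where the improved bound $\|Lf\|_{X_\lambda}\les\lambda\,\|A\chi_{[x_n>R]}\|_Y\|f\|_{X^*_\lambda}$ applies, and the tail norm of $A$ in $Y$ can be made as small as desired. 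This produces a geometric factor $r^{2m}$ with a constant depending on $A,r$ but not on $m$; the short-range factors $R_d$, which move supports backwards, are compensated by their small norm $\les d$, and one starts the march with a cutoff $\chi(\pm x_n)$, treating the $\chi(-x_n)$ piece by duality. With $d=0$ there is no minimal forward step per factor, so the march into the region where $A$ is small is never quantified; nor is a $1/m!$ gain available, since the factors have $X^*_\lambda$ operator norm of order one and are not of the bounded-kernel Volterra type for which the factorial improvement is classical. Note also that this part of the argument needs only $A\in Y$ (density of compactly supported functions in $Y$), not smoothness.

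Two further problems. First, the order of quantifiers in your opening reduction to Schwartz potentials is circular: the error from replacing $L$ by $L_\kappa$ is $\les\kappa\,C^{m-1}2^m$, so $\kappa$ must be chosen after $m$ (exponentially small in $m$ if the target is $(2r)^m$), while your directed estimate carries constants depending on the support radius $R_\kappa$, which forces $m$ to be chosen after $\kappa$. The paper avoids this by proving the directed bound for the original $A\in Y$ and using the smooth compactly supported approximation only for the undirected products, of which there are finitely many once $m$, $\delta$, $d$ are fixed. Second, your undirected bound of ``a factor $r$ per gap'' is unjustified: the gain from angular separation comes from nonstationary phase and is $\O(\lambda^{-N})$, valid for smooth compactly supported $A$; after approximating a merely continuous $A\in Y$ one only gets vanishing as $\lambda\to\infty$ with no rate. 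Since the number of undirected tuples is about $(20m)^{(n-1)m}$, the only way to beat it is to fix $m,\delta,d$ first and then take $\lambda_1$ large, which is how the paper concludes, and which your proposal does not make precise.
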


By choosing $c = \frac12$, the series in~\eqref{eq:geomser} becomes absolutely
convergent.  In view of \eqref{eq:res_iden}, we thus
conclude the following limiting absorption principle for large
energies:

\begin{prop}\label{prop:highen} Under the conditions of the previous
lemma,  there exists $\lambda_1=\lambda_1(A,V)$ so that for all
$\lambda\ge\lambda_1$ one has $R_L(\lambda^2):X_\lambda\to
X^*_\lambda$ with norm estimate
\[ \| R_L(\lambda^2)f\|_{X^*_\lambda} \le C_n\, \lambda^{-1}
\|f\|_{X_\lambda}
\]
for all $\lambda\ge\lambda_1$.
\end{prop}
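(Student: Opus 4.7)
The plan is to convert Lemma~\ref{lem:largem} (applied with constant $c=\half$) into invertibility of $I+R_0(\lambda^2)L$ on $X^*_\lambda$, then use the identity $R_L(\lambda^2)=(I+R_0(\lambda^2)L)^{-1}R_0(\lambda^2)$ together with Corollary~\ref{cor:R_0}. The only non-routine point is that the operator $R_0(\lambda^2)L$ itself is \emph{not} small in $X^*_\lambda\to X^*_\lambda$ norm for large $A$; we only have smallness for an $m$-th power. This forces the Neumann series argument to be done in blocks of length $m$.

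First I would record the simple bound
\[
\|R_0(\lambda^2)L\|_{X^*_\lambda\to X^*_\lambda}\le K:=C_n(\|A\|_Y+\|V\|_Y),\qquad \lambda\ge 1,
\]
obtained by composing Corollary~\ref{cor:R_0} (which gives $\|R_0(\lambda^2)\|_{X_\lambda\to X^*_\lambda}\lesssim \lambda^{-1}$) with Lemma~\ref{lem:L} (which gives $\|L\|_{X^*_\lambda\to X_\lambda}\lesssim \lambda\|A\|_Y+\|V\|_Y$); the factors of $\lambda$ cancel. In particular $\|(R_0L)^r\|_{X^*_\lambda\to X^*_\lambda}\le K^r$ for every $r\ge 0$, uniformly in $\lambda\ge 1$.

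Next, apply Lemma~\ref{lem:largem} with $c=\half$ to produce $m=m(A,V)$ and $\lambda_1=\lambda_1(A,V)$ such that $\|(R_0(\lambda^2)L)^m\|_{X^*_\lambda\to X^*_\lambda}\le\half$ for all $\lambda\ge\lambda_1$. Writing any integer $k\ge 0$ as $k=qm+r$ with $0\le r<m$ and using submultiplicativity,
\[
\|(R_0(\lambda^2)L)^k\|_{X^*_\lambda\to X^*_\lambda}\le 2^{-q}K^r,
\]
so the Neumann series $\sum_{\ell\ge 0}(-1)^\ell(R_0(\lambda^2)L)^\ell$ converges absolutely in $X^*_\lambda\to X^*_\lambda$ with
\[
\|(I+R_0(\lambda^2)L)^{-1}\|_{X^*_\lambda\to X^*_\lambda}\le \sum_{q=0}^\infty 2^{-q}\sum_{r=0}^{m-1}K^r \le 2\,\frac{\max(1,K^m)}{\max(1,K-1)}=:M(A,V),
\]
uniformly in $\lambda\ge\lambda_1$. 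Combining with Corollary~\ref{cor:R_0} yields
\[
\|R_L(\lambda^2)f\|_{X^*_\lambda}\le M(A,V)\,\|R_0(\lambda^2)f\|_{X^*_\lambda}\le C_n M(A,V)\,\lambda^{-1}\|f\|_{X_\lambda},
\]
which is the desired bound (absorbing $M(A,V)$ into the constant). The only obstacle worth flagging is already absorbed in Lemma~\ref{lem:largem}; once that is granted, the rest is the standard block-geometric-sum trick, which handles the failure of $R_0(\lambda^2)L$ itself to be a contraction.
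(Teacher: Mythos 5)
Your argument is correct and is essentially the paper's own proof: the paper deduces Proposition~\ref{prop:highen} exactly by taking $c=\frac12$ in Lemma~\ref{lem:largem}, summing the Neumann series for $(I+R_0(\lambda^2)L)^{-1}$, and invoking the identity $R_L=(I+R_0L)^{-1}R_0$ together with Corollary~\ref{cor:R_0}. You merely make explicit the block-geometric-sum step (controlling intermediate powers by the uniform bound $K$ on $\|R_0(\lambda^2)L\|_{X^*_\lambda\to X^*_\lambda}$) that the paper leaves implicit; your closed-form bound for $\sum_{r<m}K^r$ is slightly off when $1<K<2$, but any finite bound depending only on $A,V$ suffices, so this is immaterial.
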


As a corollary, we obtain the desired $L^2$ bounds on
$Z_0 R_L(\lambda^2)Z_0^*$ as required in Section~\ref{sec:setup}.

\begin{cor}\label{cor:highenZ}
With $\lambda_1$ as above, there are the bounds
\[ \begin{split}
\sup_{\lambda\ge\lambda_1} \big\| \la x\ra^{-\sigma}
\la\nabla\ra^{\half} R_L(\lambda^2+i0)\la\nabla\ra^{\half}\la
x\ra^{-\sigma} \big\|_{2\to2}
 \le C_n  \\
\sup_{\lambda\ge\lambda_1} \big\| \la x\ra^{-\sigma}
|\nabla|^{\half} R_L(\lambda^2+i0) |\nabla|^{\half}\la
x\ra^{-\sigma} \big\|_{2\to2}
 \le C_n
\end{split}
\]
for any $\sigma > \frac12$.  In particular, $Z_0 R_L(\lambda^2)Z_0^*$
is uniformly bounded in~$L^2$ for $\lambda\ge \lambda_1$.
\end{cor}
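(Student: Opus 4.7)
The plan is to deduce the corollary by sandwiching Proposition~\ref{prop:highen} between two copies of the weighted half-derivative operator. The main auxiliary bound needed is that for $\sigma>\half$ and $\lambda \ge 1$,
\[
\|\la\nabla\ra^{\half}\la x\ra^{-\sigma} f\|_{X_\lambda} \les \lambda^{\half}\|f\|_2,
\]
together with its $|\nabla|^{\half}$-analogue. Its dual reads $\|\la x\ra^{-\sigma}\la\nabla\ra^{\half} u\|_2 \les \lambda^{\half}\|u\|_{X_\lambda^*}$. Chaining $L^2 \to X_\lambda \to X_\lambda^* \to L^2$ via Proposition~\ref{prop:highen} yields
\[
\|\la x\ra^{-\sigma}\la\nabla\ra^{\half} R_L(\lambda^2)\la\nabla\ra^{\half}\la x\ra^{-\sigma} f\|_2 \les \lambda^{\half}\cdot \lambda^{-1}\cdot \lambda^{\half}\|f\|_2 = C_n\|f\|_2,
\]
which is the first asserted bound; the $|\nabla|^{\half}$ version follows identically, and this is exactly the uniform $L^2$-boundedness of $Z_0 R_L(\lambda^2)Z_0^*$ claimed at the end.

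To prove the mapping property, decompose $f = P_{\le\lambda}f + P_{>\lambda}f$ via a smooth Littlewood--Paley cutoff at frequency $\lambda$, and set $h_1 := \la\nabla\ra^{\half}\la x\ra^{-\sigma}P_{\le\lambda}f$, $h_2 := \la\nabla\ra^{\half}\la x\ra^{-\sigma}P_{>\lambda}f$, so that $h_1 + h_2$ is a candidate decomposition for the infimum defining $\|\cdot\|_{X_\lambda}$. Commuting $\la\nabla\ra^{\half}$ past the weight, the leading part of $h_1$ reads $\la x\ra^{-\sigma}\la\nabla\ra^{\half}P_{\le\lambda}f$, whose $B$-norm is controlled by the inclusion $\la x\ra^{-\sigma}: L^2 \hookrightarrow B$ (valid for $\sigma>\half$ because $\sum_{j\ge 1} 2^{j(1-2\sigma)}<\infty$ by Cauchy--Schwarz on the dyadic shells) together with $\|\la\nabla\ra^{\half}P_{\le\lambda}f\|_2 \les \lambda^{\half}\|f\|_2$. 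For $h_2$, observe that
\[
\la\nabla\ra^{-1}h_2 = \la\nabla\ra^{-\half}\la x\ra^{-\sigma}P_{>\lambda}f,
\]
and a further commutation brings this to $\la x\ra^{-\sigma}\la\nabla\ra^{-\half}P_{>\lambda}f$ plus bounded remainders; the main term obeys $\|\la\nabla\ra^{-\half}P_{>\lambda}f\|_2 \les \lambda^{-\half}\|f\|_2$, so $\lambda\|\la\nabla\ra^{-1}h_2\|_B \les \lambda^{\half}\|f\|_2$. Adding the two contributions gives $\|h_1 + h_2\|_{X_\lambda} \les \lambda^{\half}\|f\|_2$. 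The $|\nabla|^{\half}$-variant proceeds by the same template, using the $L^2$-boundedness of $|\nabla|^{\half}\la\nabla\ra^{-\half}$ as a smooth Fourier multiplier to reroute the estimates above.

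The main obstacle I expect is the commutator bookkeeping: one must verify that the commutators $[\la\nabla\ra^{\pm\half}, \la x\ra^{-\sigma}]$ (and their $|\nabla|^{\half}$-counterparts) map $L^2$ into $B$ with the expected gain in order, so that they contribute only lower-order remainders in both $h_1$ and $h_2$. These are precisely the weighted pseudodifferential estimates packaged into Lemma~\ref{lem:comm} together with Corollary~14.1.5 of~\cite{Hor}, so the necessary input is already available in the appendix; what remains is to check that all the constants stay uniform as $\lambda \to \infty$.
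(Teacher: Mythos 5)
Your proposal is correct and follows the same overall sandwiching strategy as the paper: everything reduces to showing that $\la\nabla\ra^{\half}\la x\ra^{-\sigma}$ maps $L^2\to X_\lambda$ with norm $\les\sqrt{\lambda}$ (equivalently, $\la x\ra^{-\sigma}\la\nabla\ra^{\half}:X_\lambda^*\to L^2$ with the same norm), and then chaining $L^2\to X_\lambda\to X_\lambda^*\to L^2$ through Proposition~\ref{prop:highen}. Where you differ is in how that mapping bound is obtained. The paper works on the $X_\lambda^*$ side and gets it in one line: if $\|f\|_{X_\lambda^*}=1$, then $\|\la x\ra^{-\sigma}f\|_2\les 1$ and $\|\la x\ra^{-\sigma}\la\nabla\ra f\|_2\les\lambda$, and after interchanging weight and derivative via the appendix commutator bound, Parseval gives $\|\la\nabla\ra^{\half}\la x\ra^{-\sigma}f\|_2^2\les \|\la\nabla\ra\la x\ra^{-\sigma}f\|_2\,\|\la x\ra^{-\sigma}f\|_2\les\lambda$. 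Your Littlewood--Paley splitting at frequency $\lambda$, feeding $P_{\le\lambda}f$ into the $\|\cdot\|_B$ slot and $P_{>\lambda}f$ into the $\lambda\|\la\nabla\ra^{-1}\cdot\|_B$ slot of the infimum defining $X_\lambda$, encodes exactly the same interpolation on the predual side; it costs somewhat more commutator bookkeeping (each of $h_1,h_2$ needs a weighted commutation before the embedding $\la x\ra^{-\sigma'}L^2\subset B$, $\sigma'>\half$, can be applied), but the required inputs are indeed all in Lemma~\ref{lem:comm}, as you anticipate. One caveat on your last step: $|\nabla|^{\half}\la\nabla\ra^{-\half}$ is \emph{not} a smooth Fourier multiplier (its symbol is merely H\"older continuous at $\xi=0$), so you cannot invoke Corollary~14.1.5 of~\cite{Hor} to move it across $B$-norm estimates; either rerun your template directly with the $|\nabla|^{\half}$ commutator bounds of Lemma~\ref{lem:comm} (whose weight restrictions are satisfied for $\sigma$ close to $\half$ and $n\ge3$), or do what the paper does and deduce the $|\nabla|^{\half}$ bound from the $\la\nabla\ra^{\half}$ bound only at the level of the final sandwiched $L^2$ operators, using that $\la x\ra^{-\sigma}|\nabla|^{\half}\la\nabla\ra^{-\half}\la x\ra^{\sigma}$ is bounded on $L^2$ by Lemma~\ref{lem:frac}.
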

\begin{proof}
Let $Z= \la x \ra^{-\sigma} \la\nabla\ra^{\half}$.
In view of Proposition~\ref{prop:highen}, in order for $ZR_L(\lambda^2)Z^*$ to
be uniformly bounded in $L^2$, we need to prove that
$Z:X_\lambda^*\to L^2$ with norm $\les \sqrt{\lambda}$, and,
equivalently that $Z^*:L^2\to X_\lambda$ with the same norm.
These estimates follow rather directly from the definition of the space
$X_\lambda^*$.

If $\norm[f][X_\lambda^*] = 1$, then $f \in B^*$ and
$\japanese[x]^{-\sigma}f \in L^2$, each with bounded norm. At the same time,
$\norm[\japanese[x]^{-\sigma}\japanese[\nabla]f][2] \les \lambda$.
By the commutator bound in the appendix, it is possible to interchange the
weight and the derivative.  Therefore by Parseval's identity,
\[
\norm[\japanese[\nabla]^{\frac12}\japanese[x]^{-\sigma}f][2]^2
\ \les \ \norm[\japanese[\nabla]\japanese[x]^{-\sigma}f][2]
         \norm[\japanese[x]^{-\sigma}f][2]
\ \les \ \lambda \norm[f][X_\lambda^*]^2
\]

Once again the weight and fractional derivative can be interchanged to prove
the bound for $Z R_L(\lambda^2)Z^*$.  The bound for $Z_0 R_L(\lambda^2)Z_0^*$
follows immediately because $\japanese[x]^{-\sigma}|\nabla|^{\frac12}
\japanese[\nabla]^{-\frac12}\japanese[x]^{\sigma}$ is a bounded operator on
$L^2$, see Lemma~\ref{lem:frac}.
\end{proof}

The remainder of this section is devoted to the proof of
Lemma~\ref{lem:largem}.  Due to the estimate
\[
\norm[R_0(\lambda^2)Vf][X^*_\lambda] \les  \lambda^{-1}\|V\|_Y
\norm[f][X^*_\lambda].
\]
we can henceforth assume that $L = i(\nabla \cdot A + A \cdot
\nabla)$, with $V \equiv 0$.  A partition of unity $\{\Phi_i\}$
over $S^{n-1}$ induces a directional decomposition of the free
resolvent, namely
\begin{equation} \label{eq:decomp}
R_0(\lambda^2) = \sum_{i} \calR_i(\lambda^2) + R_d(\lambda^2)
\end{equation}
where $\calR_i(\lambda^2) := \calR_{d,\delta}(\lambda^2)$ with
$\Phi_i$ playing the role of $\Phi_\delta$ from the previous
section. Moreover, $R_d(\lambda^2)(x)=(1 - \eta_d(|x|))R_0(\lambda^2)(|x|)$ is
the ``short range piece''.  Heuristically speaking, $R_d(\lambda^2)$ behaves
like $R_0((\lambda+i\frac{d}{\lambda})^2)$ and  should therefore be bounded on $L^2$ with
operator norm $\les \frac{d}{\lambda}$.
The following lemma makes this precise.

\begin{lemma} \label{lem:R_d'}
With $R_d^+(\lambda^2)$ defined as above, the mapping estimate
\begin{equation} \label{eq:R_d}
\|D^\alpha R_d(\lambda^2)f \|_2 \le C_n\, \lambda^{-2+|\alpha|}
 \la d\lambda\ra \|f\|_2
\end{equation}
holds uniformly for every choice of $d \in (0,\infty)$, $0\le|\alpha|\le2$,
and $\lambda \ge 1$.
\end{lemma}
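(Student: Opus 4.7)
My plan is to reduce by scaling to the case $\lambda = 1$, then split the resolvent kernel into its near-diagonal (non-oscillatory) and far-field (oscillatory) parts, bounding each as a compactly supported convolution operator.

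\emph{Reduction.} The scaling identity \eqref{eq:Rscale} combined with the elementary observation $(1-\eta_d)(|x-y|) = (1-\eta_{d\lambda})(\lambda|x-y|)$ expresses the kernel of $R_d(\lambda^2)$ as $\lambda^{n-2}$ times the $\lambda$-dilation of the kernel of $R_{d\lambda}(1)$. Applying $D_x^\alpha$ introduces an extra factor $\lambda^{|\alpha|}$, and Lemma~\ref{lem:L1L2} with $L_1 = L_2 = \lambda I$ then yields
\[
\|D^\alpha R_d(\lambda^2)\|_{2 \to 2} = \lambda^{-2+|\alpha|}\, \|D^\alpha R_D(1)\|_{2 \to 2}, \qquad D := d\lambda.
\]
It therefore suffices to prove $\|D^\alpha R_D(1)\|_{2 \to 2} \le C_n \la D \ra$ for every $D \in (0, \infty)$ and $|\alpha| \le 2$.

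\emph{Splitting the kernel.} By \eqref{eq:Hsplit} and the definitions in \eqref{eq:Tdef}, write $R_D(1) = T_0^D + T_1^D$ where $T_0^D$ carries the $b(|z|)/|z|^{n-2}$ factor (supported in $|z| \le 2$, with $z := x-y$) and $T_1^D$ carries the oscillatory factor $e^{i|z|} a(|z|)/|z|^{(n-1)/2}$ (supported in $1 \le |z| \les D$, hence vanishing when $D \les 1$).  The operator $T_0^D$ is a compactly supported convolution; Schur's test handles $|\alpha| = 0$ since $|z|^{-(n-2)}$ is integrable on $|z| \le 2$. For $|\alpha| = 1, 2$, derivatives landing on $b(|z|)|z|^{-(n-2)}$ yield a Calder\'on--Zygmund kernel truncated to $|z|\le 2$, while derivatives landing on the cutoff $(1-\eta_D)$ produce kernels of size $\les D^{-|\beta|}\mathbf{1}_{|z|\sim D}$ that are trivially $L^2$-bounded by Schur. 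This delivers $\|D^\alpha T_0^D\|_{2 \to 2} \les 1$ uniformly in $D$.

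\emph{Oscillatory piece.} Assume $D \ge 1$. Decompose $T_1^D = \sum_{0 \le j \les \log_2 D} T_{1,j}$ with $T_{1,j}$ localized to $|z| \sim 2^j$. Each $T_{1,j}$ is a convolution operator, and expanding the Fourier transform of surface measure on $S^{n-1}$ via its standard stationary-phase asymptotic $\hat{d\sigma}(r\xi) \sim (r|\xi|)^{-(n-1)/2}\cos(r|\xi| - (n-1)\pi/4)$ reduces the multiplier $m_j(\xi)$ to two one-dimensional radial oscillatory integrals with phases $r(1 \pm |\xi|)$. Integration by parts in $r$ gives
\[
|m_j(\xi)| \les \min\bigl(2^j,\; ||\xi|^2 - 1|^{-1}\bigr)
\]
for $|\xi| \sim 1$ together with rapid decay as $|\xi|$ leaves this neighborhood. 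Since $|\xi|^{|\alpha|} \le 1 + |\xi|^2$ and $|m_j(\xi)| \les |\xi|^{-2}$ for $|\xi| \gg 1$, this gives $\sup_\xi |\xi|^{|\alpha|} |m_j(\xi)| \les 2^j$ for every $|\alpha| \le 2$. Summing the geometric series in $j$ yields $\|D^\alpha T_1^D\|_{2 \to 2} \les D$, completing the proof.

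\emph{Main obstacle.} The principal technical difficulty is the sharp $2^j$-bound on each dyadic oscillatory piece $T_{1,j}$. The Fourier-multiplier route sketched above is the most direct, but an alternative, more in the spirit of Section~\ref{sec:directed}, is to run a $TT^*$ argument and invoke Proposition~\ref{prop:vcp} with phase $|x-y|-|x'-y|$, essentially repeating the reasoning of Lemma~\ref{lem:VCP} but without the angular cut-off and with $\delta = 1$.
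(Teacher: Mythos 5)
Your argument is correct in outline and reaches the paper's bound, but it runs along a different track than the paper's proof. Both start with the identical scaling reduction via \eqref{eq:Rscale} and Lemma~\ref{lem:L1L2}, and both ultimately exploit that $R_d$ is a convolution; but the paper then proves a single pointwise Fourier multiplier bound, $|\widehat{R_0(1)\chi_{[|x|<\rho]}}(\xi)|\les \la\rho\ra\la\xi\ra^{-2}$ with $\rho=\lambda d$, which covers all $|\alpha|\le 2$ at once. For $\rho\le1$ this follows from an $L^1$ bound on the truncated kernel together with the equation $(\Delta+1)R_0(1)=\delta_0$ (so that $\Delta[R_0(1)\chi_\rho]$ is a point mass plus an $L^1$ function), and for $\rho>1$ from convolving $\rho^n\hat\chi(\rho\,\cdot)$ with the explicit distribution $\PV\frac{1}{|\eta|^2-1}+i\sigma_{S^{n-1}}$, which even yields the sharper bound $\rho\la\rho(|\xi|^2-1)\ra^{-1}$. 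You instead split the kernel in physical space via \eqref{eq:Hsplit}, treat the local piece by Schur/Calder\'on--Zygmund and the oscillatory piece by a dyadic decomposition and stationary phase for $\widehat{d\sigma}$, summing a geometric series in $j$; this works and is closer in spirit to Section~\ref{sec:directed}, at the cost of being longer and needing care at two points that your sketch glosses over. First, for $|\alpha|=2$ the distributional derivative of $b(|z|)|z|^{2-n}$ contains a point-mass term in addition to the principal-value Calder\'on--Zygmund kernel (this is exactly what the paper's PDE trick sidesteps), and one should say why the smoothly truncated PV part is $L^2$-bounded. Second, the asymptotic $\widehat{d\sigma}(r\xi)\sim(r|\xi|)^{-\frac{n-1}{2}}\cos(r|\xi|-\tfrac{(n-1)\pi}{4})$ is only valid for $r|\xi|\ges1$; in the regime $|\xi|\les 2^{-j}$ there is no decay from $\widehat{d\sigma}$ at all, and the bound $|m_j(\xi)|\les 2^j$ must instead come from integrating by parts in $r$ against $e^{ir}$, using that $\partial_r\widehat{d\sigma}(r\xi)=O(|\xi|)$ so each integration by parts gains $\les 2^{-j}$. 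With those two repairs (or, as you note, by running a $TT^*$ argument through Proposition~\ref{prop:vcp} with $\delta\sim1$ as in Lemma~\ref{lem:VCP}) your proof closes; the paper's route buys a shorter argument and a sharper multiplier estimate, yours buys independence from the explicit distributional Fourier transform of $R_0(1)$.
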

\begin{proof} By the scaling relation \eqref{eq:Rscale}, for any $\alpha$,
\[
\| D^\alpha R_0(\lambda^2) \chi_{[|x|<d]} \|_{2\to 2}= \lambda^{-2+|\alpha|} \|
D^\alpha R_0(1) \chi_{[|x|<\lambda d]} \|_{2\to2}
\]
where $\chi_{[|x|<\rho]}=\chi(|x|/\rho)$ is a smooth cut-off to the
set $|x|<\rho$ with $\rho>0$ arbitrary.  The notation is somewhat
ambiguous here; we are seeking an estimate for the convolution
operator with kernel $D^\alpha R_0(1)\chi_{[|x| < \lambda d]}$.  The
lemma is proved by showing that the Fourier transform of
$R_0(1)\chi_{[|x| < \rho]}$ is bounded point-wise by
$\japanese[\rho]\japanese[\xi]^{-2}$.

Consider first the case $\rho \le 1$.  The decomposition~\eqref{eq:Hsplit}
implies that $\int_{\R^n} |R_0(1)(x)\chi(|x|/\rho)|\,dx \les \rho^2$.
Furthermore, since $(\Delta+1)R_0(1)$ is a point mass at the origin,
the distribution $\Delta[R_0(1)\chi_{[|x| < \rho]}]$ consists of a point mass
plus a function of bounded $L^1$ norm.  The desired Fourier transform
estimates follow immediately.

When $\rho > 1$, it is more convenient to estimate
\[
\rho^n \Big| \int \big[\PV \frac{1}{|\eta|^2-1} +
i \sigma_{S^{n-1}}(d\eta)\big]\hat{\chi}((\xi-\eta)\rho)\, d\eta  \Big|
\]
A standard calculation shows this to be less than
$\rho\japanese[\rho(|\xi|^2 - 1)]^{-1} < \rho\japanese[\xi]^{-2}$.
\end{proof}

We shall use Lemma~\ref{lem:R_d'} in the following somewhat less precise form:

\begin{lemma}\label{lem:R_d}
For any $0<d<1$
\[
\| R_d(\lambda^2) f\|_{X_\lambda^*} \le C_n\, \lambda^{-1}d \|f\|_{X_\lambda}
\]
uniformly in $\lambda\ge d^{-1}$.
\end{lemma}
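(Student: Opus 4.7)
The plan is to upgrade the $L^2\to L^2$ mapping bounds from Lemma~\ref{lem:R_d'} to $B\to B^*$ bounds for the convolution operators $D^\alpha R_d(\lambda^2)$ with $|\alpha|\le 2$, exploiting the fact that their kernels are supported in $\{|x|\le 2d\}\subseteq\{|x|\le 2\}$. After this preliminary step, the desired $X_\lambda\to X^*_\lambda$ estimate will follow from a straightforward decomposition of the norm.

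First I would establish the following elementary fact: if a convolution operator $T$ has kernel supported in $\{|x|\le 2\}$, then $\|T\|_{B\to B^*}\le C_n\|T\|_{L^2\to L^2}$. Writing $f=\sum_j\chi_{D_j}f$, each image $T(\chi_{D_j}f)$ is supported in a bounded dilate of $D_j$ that meets only $O(1)$ of the dyadic shells $D_k$. Combining $\|\chi_{D_j}f\|_{L^2}\le 2^{-j/2}\|f\|_B$ with the observation that $2^{-k/2}2^{-j/2}\les 2^{-k}$ whenever $|j-k|\le C_0$, then summing and taking $\sup_k$, yields the claim. Applying this to $R_d(\lambda^2)$, $\nabla R_d(\lambda^2)$, and $(1-\Delta)R_d(\lambda^2)$, and using $\la d\lambda\ra\sim d\lambda$ when $\lambda\ge d^{-1}$, gives
\[
\|R_d(\lambda^2)\|_{B\to B^*}\les\lambda^{-1}d,\quad \|\nabla R_d(\lambda^2)\|_{B\to B^*}\les d,\quad \|(1-\Delta)R_d(\lambda^2)\|_{B\to B^*}\les d\lambda.
\]

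To finish, I would fix a decomposition $f=f_1+f_2\in X_\lambda$ and set $g:=\la\nabla\ra^{-1}f_2\in B$, so that $f_2=\la\nabla\ra g$. Since $R_d(\lambda^2)$ commutes with $\la\nabla\ra$ as a Fourier multiplier,
\[
R_df=R_df_1+\la\nabla\ra R_dg,\qquad \la\nabla\ra R_df=\la\nabla\ra R_df_1+(1-\Delta)R_dg.
\]
Using $\|\la\nabla\ra h\|_{B^*}\les\|h\|_{B^*}+\|\nabla h\|_{B^*}$ (Corollary~14.1.5 of~\cite{Hor}, as invoked in the proof of Corollary~\ref{cor:R_0}) together with the three displayed bounds and $\lambda\ge 1$, each of the four resulting pieces is controlled as expected: $\|R_df_1\|_{B^*}\les\lambda^{-1}d\|f_1\|_B$, $\|\la\nabla\ra R_df_1\|_{B^*}\les d\|f_1\|_B$, $\|\la\nabla\ra R_dg\|_{B^*}\les d\|g\|_B$, and $\|(1-\Delta)R_dg\|_{B^*}\les d\lambda\|g\|_B$. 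Assembling these yields $\|R_df\|_{X^*_\lambda}\les\lambda^{-1}d(\|f_1\|_B+\lambda\|g\|_B)$, and taking the infimum over decompositions completes the proof.

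The only step of substance is the $L^2\to L^2$ to $B\to B^*$ passage, which is the unique place where the hypothesis $d<1$ enters in an essential way by forcing the kernel to be supported within $\{|x|\le 2\}$; the rest is bookkeeping with Fourier multipliers and the definitions of the two norms.
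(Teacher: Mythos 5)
Your proof is correct and follows the same route as the paper: reduce to the $L^2\to L^2$ bounds of Lemma~\ref{lem:R_d'} for $D^\alpha R_d(\lambda^2)$, $|\alpha|\le 2$, and then unwind the definitions of $X_\lambda$ and $X_\lambda^*$ exactly as you do. The one point worth noting is that your dyadic-localization lemma exploiting the compact support of the kernel is superfluous: the passage from $L^2\to L^2$ to $B\to B^*$ is immediate from the embeddings $\|f\|_{L^2}\le\|f\|_B$ and $\|g\|_{B^*}\le\|g\|_{L^2}$ (this is what the paper means by ``the imbedding $B\to L^2\to B^*$''), so contrary to your closing remark the support condition and the hypothesis $d<1$ play no essential role in that step --- the only place $\lambda\ge d^{-1}$ is used is to replace $\la d\lambda\ra$ by $d\lambda$.
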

\begin{proof}
In view of the definition of the spaces $X_\lambda$, $X_\lambda^*$ this follows from Lemma~\ref{lem:R_d'}
via the imbedding $B\to L^2 \to B^*$ and the identity
$\lambda^{-1}\la d\lambda\ra \sim d$ for $\lambda > d^{-1}$.
\end{proof}

Decomposing each free resolvent in the $m$-fold product
$(R_0(\lambda^2)L)^m$ as in \eqref{eq:decomp} yields the identity
\begin{equation} \label{eq:decomp2}
(R_0(\lambda^2)L)^m = \sum_{i_1 \ldots i_m} \prod_{k=1}^m
  \big(R_{i_k}(\lambda^2)L\big).
\end{equation}
The indices $i_k$ may take numerical values corresponding to the
partition of unity $\{\Phi_i\}$, or else the letter $d$ to indicate
a short-range resolvent. There are two main types of products
represented here, namely:
\begin{itemize}
\item {\em Directed Products}, where the support of functions $\Phi_{i_k}$
and $\Phi_{i_{k+1}}$ are separated by less than $10\delta$ for each
$k$. A product is also considered to be directed if it has this
property once all instances of $i_k = d$ are removed.  The term
$(R_d(\lambda^2)L)^m$ is a vacuous example of a directed product.

\item All other terms not meeting the above criteria are
 {\em Undirected Products}.  An undirected product must contain two
adjacent numerical indices (i.e., after discarding all instances
where $i_k = d$) for which the corresponding functions $\Phi_i$ have
disjoint support with distance at least $10\delta$ between them.
\end{itemize}

\begin{lemma} \label{lem:combinatorics}
For any $\delta>0$, there exists a partition of unity $\{\Phi_i\}$
with approximately $\delta^{1-n}$ elements, having ${\rm diam}\,{\rm
supp}\,(\Phi_i) < \delta$ for each $i$ and admitting no more than
$\delta^{1-n}(C_n)^m$ directed products of length $m$
in~\eqref{eq:decomp2}.
\end{lemma}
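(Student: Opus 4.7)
\emph{Proof plan.} For the first assertion I would extract a maximal $\delta/4$-separated set $\{\omega_i\} \subset S^{n-1}$ (with respect to the geodesic distance) and let $U_i$ denote the geodesic cap of radius $\delta/2$ around $\omega_i$. Maximality forces the $U_i$ to cover $S^{n-1}$, each $U_i$ has diameter at most $\delta$, and comparing spherical volumes gives $|\{i\}| \le C_n\delta^{1-n}$. A standard smoothing produces a partition of unity $\{\Phi_i\}$ subordinate to this cover with $\dist(\supp\Phi_i,\omega_i)<\delta/2$. The geometric feature I will use is a \emph{bounded-degree} property: any given cap is within distance $10\delta$ of at most $C_n$ other caps in the collection. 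Indeed, if $\dist(\supp\Phi_i,\supp\Phi_j) < 10\delta$ then $\omega_j$ lies in the geodesic ball of radius $12\delta$ about $\omega_i$, and the separation $|\omega_i - \omega_j| \ge \delta/4$ caps the number of such $\omega_j$ by a purely dimensional constant.

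For the second assertion, each sequence $(i_1,\ldots,i_m)$ in \eqref{eq:decomp2} assigns to position $k$ either the short-range label $d$ or one of the $\lesssim\delta^{1-n}$ partition labels. By the definition of \emph{directed}, after deleting every position where $i_k=d$ the resulting sequence of numerical labels $i_{k_1},i_{k_2},\ldots$ must satisfy
\[
\dist(\supp\Phi_{i_{k_r}},\,\supp\Phi_{i_{k_{r+1}}}) < 10\delta;
\]
that is, consecutive numerical labels are neighbors in the bounded-degree graph from the previous paragraph. To count such sequences, let $k_0$ be the smallest index (if any) with $i_{k_0}\ne d$. There are at most $C_n\delta^{1-n}$ choices for $i_{k_0}$; for each $k > k_0$ there are at most $C_n+1$ choices for $i_k$, namely $d$ together with the at most $C_n$ neighbors of the last numerical label. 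Positions $k<k_0$ are forced to equal $d$, and the all-$d$ sequence contributes~$1$. Summing over $k_0\in\{1,\ldots,m\}$ gives
\[
1 + C_n\delta^{1-n}\sum_{k_0=1}^m (C_n+1)^{m-k_0} \;\le\; \delta^{1-n}(C_n')^m,
\]
which is the claimed bound after renaming the constant. There is no substantive obstacle in this argument; its whole content is the uniform-in-$\delta$ bound on local cap-degree, which is exactly what ensures the exponential base in the final estimate depends only on the dimension and not on the size of the angular partition.
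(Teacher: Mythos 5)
Your proposal is correct and follows essentially the same route as the paper: the paper likewise treats the construction of the $\delta$-cap partition as a standard geometric fact and counts directed products by noting there are $\les \delta^{1-n}$ choices for the first numerical index and only a dimensional constant $C_n$ of admissible choices at each subsequent step. Your write-up merely makes explicit the maximal-separated-set construction, the bounded local degree of the cap graph, and the bookkeeping for the short-range label $d$, all of which the paper leaves implicit.
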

\begin{proof}
The first claim is a standard fact from differential geometry. For
the second claim note that there are $\les \delta^{1-n}$ choices for
the first element in a directed product, but only $C_n$ choices at
each subsequent step.
\end{proof}

If $\delta < \frac{1}{20m}$, then a directed product is truly
``directed'' in the sense that all the participating functions
$\Phi_{i_k}$ have support well within a single hemisphere. The
convolution operators $R_{i_k}(\lambda^2)$ are therefore biased
consistently to one side.  In the one-dimensional setting this is
reminiscent of a product of Volterra operators, where a norm
improvement of $m!$ is typical.

\begin{lemma} \label{lem:directedprod}
Suppose $L = i(\nabla \cdot A + A \cdot \nabla)$, with $A(x)\in Y$.
Given any $r > 0$, there exists a distance $d = d(r) > 0$ such that
each directed product in~\eqref{eq:decomp2} satisfies the estimate
\begin{equation}
\bignorm[\prod_{k=1}^m \big(R_{i_k}(\lambda^2)L\big)f][X^*_\lambda]
\le
 C_{n,A,r}\, r^{m} \norm[f][X^*_\lambda]
\end{equation}
uniformly over all $\lambda > d^{-1}$ and all choices of $m$ and $\delta$
satisfying $\delta \le \frac{1}{20m}$.

Consequently, given any $c>0$, there exists a number $m= m(c,A)$ and a
partition of unity governed by $\delta = \frac{1}{20m}$ so that
the sum over all directed products achieves the bound
\begin{equation*} \label{eq:directedprod}
\sum_{\substack{i_1 \ldots i_m \\ directed}} \bignorm[\prod_{k=1}^m
\big(R_{i_k}(\lambda^2)L\big)][X^*_\lambda\to X^*_\lambda] \le
{\textstyle \frac{c}{2}}
\end{equation*}
uniformly in $\lambda > d^{-1}$.
\end{lemma}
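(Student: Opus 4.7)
The plan is to exploit the one-sided directional structure of the chain of resolvents to get a Volterra-type gain. First note that the trivial composition of Corollary~\ref{cor:R_0} (yielding $\|\calR_{i_k}(\lambda^2)\|_{X_\lambda \to X^*_\lambda} \les \lambda^{-1}$) with Lemma~\ref{lem:L} (yielding $\|L\|_{X^*_\lambda \to X_\lambda} \les \lambda\|A\|_Y$) only gives $\|R_{i_k}(\lambda^2) L\|_{X^*_\lambda\to X^*_\lambda} \les \|A\|_Y$, which does not shrink with $m$.  A finer geometric argument is therefore required.  Since $V\equiv 0$ by assumption, I work throughout with $L = i(\nabla\cdot A + A\cdot\nabla)$.

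The key geometric observation is that when $\delta \le \frac{1}{20m}$, the angular cut-offs $\Phi_{i_1},\ldots,\Phi_{i_m}$ of a directed product lie within a common spherical cap of angular radius $\le \frac{1}{20}$, which after rotation I place around $e_n$.  The kernel of each $\calR_{i_k}(\lambda^2)(x_{k-1},y_k)$ is then supported in $\{(x_{k-1}-y_k)\cdot e_n \ge \tfrac{19}{20}|x_{k-1}-y_k|,\ |x_{k-1}-y_k|\ge d\}$; the composition (including short range factors $R_d$, absorbed via Lemma~\ref{lem:R_d} at the cost of a factor $C_n d$ each) has kernel supported on chains $x_0,y_1,x_1,\ldots,y_m,x_m$ whose $e_n$-components are strictly monotone decreasing, with consecutive differences bounded below by $d$ in that direction.

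To extract the Volterra gain I distribute the derivatives from $L=i(\nabla\cdot A + A\cdot \nabla)$ onto the adjacent resolvents using the $|\alpha|\le 2$ version of Proposition~\ref{prop:dir_res} (packaged as the derivative form of Corollary~\ref{cor:R_0}), so that each $L$ contributes only pointwise multiplication by a component of $A(y_k)$.  Next I decouple the estimate by splitting the intermediate integrations into \emph{transverse} integrals (perpendicular to $e_n$), which are controlled by the $X_\lambda\to X^*_\lambda$ mapping norms of Corollary~\ref{cor:R_0} applied to the individual resolvents, and a \emph{longitudinal} $m$-fold integration over $y_{1,n}>y_{2,n}>\cdots>y_{m,n}$ of the product $\prod_k |A(y_k)|$.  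Because $A\in Y$ forces the one-dimensional integral $\int |A|\,dy_n$ along any vertical line to be bounded by $\|A\|_Y$, the standard Volterra identity $\int_{t_1>\cdots>t_m} \prod_k f(t_k)\,dt = \frac{1}{m!}(\int f)^m$ produces a factor of $\|A\|_Y^m/m!$.  The net per-product bound is therefore $C_n^m \|A\|_Y^m/m!$, and Stirling's formula gives $C_n^m\|A\|_Y^m/m! \le r^m$ for all $m\ge m_0(r,A)$; the finitely many exceptional small $m$ are absorbed into $C_{n,A,r}$.  The role of the free parameter $d$ is merely to ensure $\lambda > d^{-1}$ is large enough that the contributions of any interspersed $R_d$ factors are negligible.

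For the summed bound, Lemma~\ref{lem:combinatorics} counts at most $\delta^{1-n}C_n^m = (20m)^{n-1}C_n^m$ directed products of length $m$.  Given $c>0$, I first fix $m$ large enough that $(20m)^{n-1}(2C_n)^m (C_n\|A\|_Y)^m/m! < c/2$ (using the factorial decay), then set $\delta=\frac{1}{20m}$ and choose $d=d(m,A)$ correspondingly small, which converts the per-product bound into $r^m$ with $r$ small.  The main obstacle is turning the heuristic Volterra decoupling into a rigorous $X^*_\lambda$-operator-norm estimate: the transverse/longitudinal splitting must be compatible with the non-$L^2$ nature of the $B^*$ norm and with the derivative terms produced by $L$, so that the factorial gain survives the passage through the anisotropic weighted norms underlying the $X_\lambda$ scale.
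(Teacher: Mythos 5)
Your proposal hinges on a Volterra-type factorial gain: after distributing the derivatives of $L$ onto neighboring resolvents, you claim the $m$-fold composition can be ``decoupled'' into transverse integrations controlled by the $X_\lambda\to X^*_\lambda$ bounds of Corollary~\ref{cor:R_0} and a single longitudinal ordered integration of $\prod_k|A(y_k)|$ over $y_{1,n}>\cdots>y_{m,n}$, yielding $\|A\|_Y^m/m!$. This decoupling is exactly the step that is never justified, and it does not follow from any estimate available in the paper: Corollary~\ref{cor:R_0} and Proposition~\ref{prop:dir_res} bound the full operators on $B,B^*$ (spaces defined by dyadic weights in $|x|$, not adapted to a fixed hyperplane fibration), and the conical kernels $\calR_{i_k}$ couple the transverse and longitudinal variables through the phase $e^{i\lambda|x-y|}$ and the weight $|x-y|^{-(n-1)/2}$; there is no fiberwise (frozen-$y_n$) mapping bound that would let you pull the potential's longitudinal dependence out of the operator norms and then invoke the simplex identity. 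You acknowledge this yourself in the last paragraph, which means the central mechanism of the proof is a heuristic, not an argument. A second concrete gap: the interspersed short-range factors $R_d$ destroy the strict monotonicity of the chain (they can move supports backward by $\sim d$), so the ordered-simplex structure on which the $1/m!$ rests fails for precisely those products; your remark that $d$ only serves to make $\lambda>d^{-1}$ large misreads its role — $d=d(r)$ must be chosen small so that the gain $\les d$ per short-range factor (Lemma~\ref{lem:R_d}) beats the loss incurred by the backward steps.

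For comparison, the paper's proof obtains the $r^m$ bound without any factorial gain. It inserts a half-space cutoff $\chi(x_n)$, observes that each directed factor $R_{i_k}^+(\lambda^2)L$ translates supports upward by $\ges d$, and that once the support has moved past $x_n>R$ one may replace $\|A\|_Y$ in Lemma~\ref{lem:L} by the small tail norm $\|A\chi_{[x_n>R]}\|_Y<r^2/(C_n^2C_A)$ (compactly supported functions being dense in $Y$); after the first $\sim 3R/(2d)$ steps every remaining factor contributes $r^2/(C_nC_A)$, which yields \eqref{eq:alldirected}. Short-range factors are handled by choosing $d=(r/(C_nC_A))^8$ so the factor $d$ from Lemma~\ref{lem:R_d} compensates the delayed progression, and the cutoff is removed by splitting $\chi(x_n)+\chi(-x_n)$ in the middle of the product and treating the $\chi(-x_n)$ half by duality, using that the adjoint of $R_\Phi^+$ is $R_{\tilde\Phi}^-$ with the antipodal cone. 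If you want to salvage your route you would need to prove a genuine fibered kernel estimate replacing the operator-norm decoupling, and to restrict the ordered integration to the directed steps while tracking the $R_d$ insertions separately; as written, the lemma is not proved.
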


\begin{proof}
In this proof, we will keep track of the superscripts
$\pm$ on the resolvents. Also, we will write $\|A\|_Y=C_A$.
There is no loss of generality if we assume that $r < C_nC_A$, where
$C_n$ is the product of the constants in~\eqref{eq:R_0} and~\eqref{eq:L}.

After a rotation, we may assume that every function $\Phi_{i_k}$
which appears in the product has support within a half-radian
neighborhood of the north pole, where $x_n > \frac23$. If $f \in
X_\lambda$ is supported on the half plane $\{x_n
> a\}$, then the support of $R_{i_k}^+(\lambda^2)f$ must be
translated upward to $\{x_n > a + \frac23d\}$.
The short-range resolvent $R_d^+(\lambda^2)$ does
not have a preferred direction; however if $f \in X_\lambda$ is
supported on $\{x_n > a\}$ then ${\rm supp}\,R_d^+(\lambda^2)f
\subset \{x_n > a - 2d\}$.

The purpose of keeping track of supports is that if $f \in
X^*_\lambda$ is supported away from the origin, in the set $\{|x| >
a\}$, then the estimate in Lemma~\ref{lem:L} can be improved to
\begin{equation} \label{eq:distantL}
\norm[Lf][X_\lambda] \les \lambda \norm[A\chi_{[|x| > a]}][Y]
\norm[f][X^*_\lambda],
\end{equation}
since we are assuming that $V \equiv 0$. For $a>0$, the half-plane
$\{x_n > a\}$ is sufficiently far from the origin for this improved
estimate to hold.  Note that the compactly supported functions are dense in
$Y$. Given any $A \in Y$ and any $r>0$, we can choose $R < \infty$ so that
\[\norm[A\chi_{[x_n > R]}][Y] <  \frac{r^2}{C_n^2C_A} \]

Let $\chi$ be a smooth function supported on the interval $[-1,
\infty)$ such that $\chi(x_n) + \chi(-x_n) = 1$.  We will initially
estimate the operator norm of $\big(\prod_k (R_{i_k}^+(\lambda^2)L)
\big) \chi(x_n)$. Multiplication by $\chi(x_n)$ is bounded operator
of approximately unit norm in all spaces $X^*_\lambda$ and
$X_\lambda$.

The support of $\chi(x_n) f$ lies in the half-space $\{x_n > -1\}$.
Suppose every one of the indices $i_k$ is numerical.  Then each
application of an operator $R_{i_k}^+(\lambda^2)L$ translates the
support upward by $\frac23d$. For the first $\frac{3R}{2d}$ steps the
operator norm of $R_{i_k}^+(\lambda^2)L$ is bounded by~\eqref{eq:R_0}
and~\eqref{eq:L}.  Thereafter it is possible to use the stronger bound
of~\eqref{eq:distantL} in place of~\eqref{eq:L} because the support will
have moved into the half-space $\{x_n > R\}$.  The combined estimate is
\begin{equation} \label{eq:alldirected}
\begin{aligned}
\bignorm[\prod_{k=1}^m (R_{i_k}^+(\lambda^2)L)\,\chi(x_n)f][X^*_\lambda]
&\le (C_nC_A)^{m} \Big(\frac{r^2}{(C_nC_A)^2}\Big)^{m-\frac{3R}{2d}}
\norm[f][X^*_\lambda] \\
& = (C_n C_A)^{-m}(r^{-1}C_n C_A)^{\frac{3R}{d}} r^{2m} \norm[f][X^*_\lambda]
\end{aligned}
\end{equation}
This is valid for small $m$ by our assumption that $r < C_nC_A$.

If each directed resolvent $R_{\Phi_i}^+(\lambda^2)$ is seen as
taking one step forward, then the short-range resolvent
$R_d^+(\lambda^2)$ may take as many as three steps back.  Suppose a
directed product includes exactly one index $i_k = d$.  This will
have the most pronounced effect if it occurs near the beginning of
the product, delaying the upward progression of supports by a total
of $4$ steps.  In this case one combines
~\eqref{eq:distantL},~\eqref{eq:R_0}, and Lemma~\ref{lem:R_d}
to obtain
\[
\bignorm[\prod_{k=1}^m (R_{i_k}^+(\lambda^2)L)\,\chi(x_n)f][X^*_\lambda]
\le (C_nC_A)^{m} d \Big(\frac{r^2}{(C_nC_A)^2}\Big)^{m-(\frac{3R}{2d}+4)}
\norm[f][X^*_\lambda]
\]
Notice that this estimate agrees with the one in
\eqref{eq:alldirected} up to
a factor of $d(r^{-1}C_nC_A)^8$.
By setting $d = d(r) = \big(\frac{r}{C_nC_A}\big)^8$,
the bound in \eqref{eq:alldirected} is
strictly larger.  Similar arguments yield the same result for any
directed product with one or more instances of the short-range
resolvent $R_d^+(\lambda^2)$.

To remove the spatial cutoff, write
\begin{equation*}
\prod_{k=1}^m R_{i_k}^+(\lambda^2)L \ = \ \Big(\prod_{k=1}^{m/2}
(R_{i_k}^+(\lambda^2)L)\Big)(\chi(x_n) + \chi(-x_n))
\Big(\prod_{k=\frac{m}2+1}^m (R_{i_k}^+(\lambda^2)L)\Big)
\end{equation*}
Consider the $\chi(x_n)$ term.  By~\eqref{eq:alldirected}, the first half of
the product carries an operator norm bound of $(C_n\, C_A)^{-\frac{m}{2}}
(r^{-1}C_nC_A)^{\frac{3R}{d(r)}}r^m$. The second half
contributes at most $(C_n C_A)^{m/2}$, based on~\eqref{eq:R_0} and
Lemma~\ref{lem:L}.  Put together, this product has an operator norm less than
$C_{n,A,r}\,r^m$, where $C_{n,A,r} = (r^{-1}C_nC_A)^{\frac{3R}{d(r)}}$.

The $\chi(-x_n)$ term has nearly identical
estimates, by duality. The adjoint of any directed resolvent
$R_{\Phi}^+(\lambda^2)$ is precisely
$R_{\tilde{\Phi}}^-(\lambda^2)$, with $\tilde{\Phi}$ being the
antipodal image of $\Phi$.  Because the order of multiplication is
reversed, one applies the geometric argument above (modulo the
antipodal map) to a product of the form
\begin{equation*}
\Big(\prod_{k=1}^{m/2} (L R_{i_k}^-(\lambda^2))\Big) \chi(-x_n),
\end{equation*}
which is an operator on $X_\lambda$.  The
estimates~\eqref{eq:distantL},~\eqref{eq:R_0}, and~\eqref{eq:R_d}
are used in the same manner as in deriving the main
bound~\eqref{eq:alldirected}.

According to Lemma~\ref{lem:combinatorics} there are at most $\delta^{1-n}
(C_n)^m$ directed products of length $m$.
To prove \eqref{eq:directedprod}, it therefore suffices to let
$r = \frac{1}{2C_n}$, and $\delta = \frac{1}{20m}$ so that the sum of the
operator norms of all
directed products is bounded by $20^{n-1} C_{n,A} m^{n-1}2^{-m}$.
This can be made smaller than $\frac{c}{2}$ by choosing $m$ sufficiently
large.
\end{proof}

As for the undirected products, recall that their defining feature
is the presence of adjacent resolvents $R_{i}^+(\lambda^2)$ oriented
in distinct directions.  The resulting oscillatory integral has no
region of stationary phase, and therefore exhibits improved bounds
at high energy provided the potential $A(x)$ is smooth.

\begin{lemma} \label{lem:nonstatphase}
Let $\Phi_1$ and $\Phi_2$ be chosen from a partition of unity of
$S^{n-1}$ so that their supports are separated by a distance greater
than $10\delta$. Suppose $A \in C^\infty(\R^n)$ with compact
support. Then for each $j \ge 0$, and any $N\ge1$,
\begin{equation}\label{eq:Phi1Phi2}
\big\| R_{d,\Phi_2}^+(\lambda^2) (L R_d^+(\lambda^2))^j L
R_{d,\Phi_1}^+(\lambda^2) \big\|_{X_\lambda \to X^*_\lambda} =
\O(\lambda^{-N})
\end{equation}
as $\lambda\to\infty$ and similarly for $R^-(\lambda^2)$.
\end{lemma}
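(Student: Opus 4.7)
The plan is to expand the kernel of $T:=R_{d,\Phi_2}^+(\lambda^2)(LR_d^+(\lambda^2))^jLR_{d,\Phi_1}^+(\lambda^2)$ as an iterated oscillatory integral and apply non-stationary phase; the angular separation of $\Phi_1,\Phi_2$ yields a pointwise lower bound on the phase gradient, which after $N$ integrations by parts produces $\lambda^{-N}$ decay. Since $A\in C^\infty_c(\R^n)$, every factor of $L$ confines its adjacent intermediate variable to $\supp A$, so the kernel of $T$ equals an integral over $(u_1,\ldots,u_{j+1})\in(\supp A)^{j+1}$ of a product of resolvent kernels and smooth factors in $A,\nabla A$. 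Expand each resolvent via \eqref{eq:Hsplit}: the $b$-piece of the outer $d$-cut resolvents vanishes once $\lambda d>4$, while a $b$-piece appearing in any middle $R_d^+(\lambda^2)$ localizes the corresponding step to $|u_k-u_{k+1}|\les\lambda^{-1}$ and contributes an extra volume factor $\lambda^{-n}$ that easily absorbs the desired decay. It therefore suffices to treat the contribution in which every factor is the oscillatory $a$-piece.

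This contribution has phase
\begin{equation*}
\lambda\Psi = \lambda\Big(|x-u_1|+\sum_{k=1}^{j}|u_k-u_{k+1}|+|u_{j+1}-y|\Big),
\end{equation*}
and amplitude bounded by $C_{j,A}\lambda^{M(j)}$, each $L$ contributing at most one $\lambda$ from $\nabla$ landing on an exponential. Set $\hat v_0:=\widehat{x-u_1}$, $\hat v_k:=\widehat{u_k-u_{k+1}}$ for $1\le k\le j$, and $\hat v_{j+1}:=\widehat{u_{j+1}-y}$, so that $\nabla_{u_k}\Psi=\hat v_k-\hat v_{k-1}$ for $1\le k\le j+1$. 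The outer angular cut-offs force $\hat v_0\in\supp\Phi_2$ and $\hat v_{j+1}\in\supp\Phi_1$, sets separated on $S^{n-1}$ by more than $10\delta$. The telescoping identity $\hat v_{j+1}-\hat v_0=\sum_{k=1}^{j+1}(\hat v_k-\hat v_{k-1})$ and the triangle inequality then give
\begin{equation*}
\max_{1\le k\le j+1}|\nabla_{u_k}\Psi| \ges \frac{\delta}{j+1}
\end{equation*}
pointwise on the integration domain.

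Partition $(\supp A)^{j+1}$ via a smooth partition of unity $\{\chi_k\}_{k=1}^{j+1}$ adapted to dyadic ranges of $|\hat v_k-\hat v_{k-1}|$, arranged so that on $\supp\chi_k$ the $k$-th gradient realizes the maximum and all derivatives of $\chi_k$ and of $|\nabla_{u_k}\Psi|^{-2}$ satisfy symbol-type bounds. On each piece apply the non-stationary phase identity $e^{i\lambda\Psi}=(i\lambda|\nabla_{u_k}\Psi|^2)^{-1}\nabla_{u_k}\Psi\cdot\nabla_{u_k}e^{i\lambda\Psi}$ and integrate by parts $N$ times in $u_k$; each step trades one power of $\lambda^{-1}$ for at most $(j+1)\delta^{-1}$ from the reciprocal phase gradient, with the derivatives landing on smooth, uniformly bounded components governed by \eqref{eq:a} and the smoothness of $A$. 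Schur's test applied to the resulting pointwise estimate $|K(x,y)|\le C_{N,j,\delta,A}\lambda^{M(j)-N}$ (times a kernel with bounded $L^1$ mass in either variable, since the cones on $\widehat{x-u_1},\widehat{u_{j+1}-y}$ together with $u_1,u_{j+1}\in\supp A$ control $x,y$ respectively) yields $\|T\|_{L^2\to L^2}=\O(\lambda^{-N})$ for every $N$. Since $X_\lambda,X_\lambda^*$ are comparable to weighted $L^2$ with at most polynomial-in-$\lambda$ loss, this implies the claimed $\|T\|_{X_\lambda\to X_\lambda^*}=\O(\lambda^{-N})$. The chief obstacle is the combinatorial bookkeeping of the microlocal partition and the amplitude tracking through repeated integration by parts; the essential structural input is the pointwise phase-gradient lower bound produced by the angular separation of $\Phi_1,\Phi_2$.
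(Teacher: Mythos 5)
There is a genuine gap, and it is precisely at the point your argument treats as routine: the interior short-range resolvents $R_d^+(\lambda^2)$. First, you cannot discard their non-oscillatory $b$-pieces by the volume factor $\lambda^{-n}$: the lemma requires $\O(\lambda^{-N})$ for \emph{every} $N$, while the localization $|u_k-u_{k+1}|\les \lambda^{-1}$ buys only a fixed power of $\lambda$, which does not even dominate the prefactors $\lambda^{(n-3)(j+2)/2}$ from the resolvent kernels and the $\lambda$'s produced by the gradients in $L$. Worse, once one link carries no phase, your telescoping bound collapses: the chain breaks into two sub-chains each constrained by only one of the cones $\Phi_1,\Phi_2$, so all remaining phase gradients can vanish simultaneously (collinear configurations), and integrating by parts in a variable adjacent to the scale-$\lambda^{-1}$ kernel loses a factor $\lambda$ per step from differentiating it. Second, even in the all-oscillatory terms your key claim that the differentiated components are ``smooth, uniformly bounded'' is false: if the maximal gradient occurs at an interior $u_k$, the $u_k$-derivatives hit $a(\lambda|u_k-u_{k\pm1}|)\,|u_k-u_{k\pm1}|^{-(n-1)/2}$ and the cut-offs built from $\hat v_k$, producing factors of size $|u_k-u_{k\pm1}|^{-1}$, which on the support of the $a$-piece can be as large as $\lambda$; so each integration by parts can cancel its own $\lambda^{-1}$ gain, and after $N$ steps the singularity $|u_k-u_{k\pm1}|^{-(n-1)/2-N}$ is not even locally integrable for large $N$. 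So the scheme as written does not deliver arbitrary polynomial decay.

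The paper's proof is built to avoid exactly this. It keeps the full short-range kernels (oscillatory plus non-oscillatory parts combined into a single symbol $b_{\lambda d}$) and performs the change of variables $w_i=\frac12(u_i-u_{i+1})$ for $0\le i\le j-1$, $w_j=\frac12(u_0+u_j)$, so that every interior kernel depends only on $w_0,\dots,w_{j-1}$, while the two outer phases $\lambda|x-u_0|$, $\lambda|u_j-y|$ both contain $w_j$. Integration by parts is then carried out \emph{only} in $w_j$, where the phase gradient is $\lambda\big|\widehat{x-u_0}-\widehat{u_j-y}\big|>\lambda\delta$ by the angular separation, and the only differentiated factors are the outer amplitudes (with symbol-type decay in $\lambda|x-u_0|$, $\lambda|u_j-y|$, so chain-rule $\lambda$'s are compensated) and the smooth compactly supported $A$; the singular interior kernels are never differentiated beyond the single $\nabla_{u_i}$ already present in $L$ (see the paper's remark on local integrability). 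If you want to salvage your approach, you need an analogous device that shields the interior kernels from the repeated integrations by parts; choosing the integration variable by maximizing $|\nabla_{u_k}\Psi|$ does not do this.
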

\begin{proof}
In view of \eqref{eq:Rscale} and \eqref{eq:Hsplit} we can write
\begin{align*}
   &R_0^+(\lambda^2)(|x|)\eta_d(|x|)\Phi(x/|x|) \\&= \lambda^{n-2} R_0(1)(\lambda
   |x|) \eta_{d}(|x|)\Phi(x/|x|)\\
   &= \lambda^{\frac{n-3}{2}} \frac{e^{i\lambda|x|}}{|x|^{\frac{n-1}{2}}}\Big[
   a(\lambda|x|) +
   \frac{e^{-i\lambda|x|}b(\lambda|x|)}{|\lambda x|^{\frac{n-3}{2}}}\Big]\eta_{\lambda d}
   (\lambda |x|)\Phi(x/|x|)\\
   &= \lambda^{\frac{n-3}{2}}
   \frac{e^{i\lambda|x|}}{|x|^{\frac{n-1}{2}}} a_{\lambda d}(\lambda
   |x|) \Phi(x/|x|)
\end{align*}
where for arbitrary $\tilde d=\lambda d>0$
\[
a_{\tilde d}(r) = \Big[a(r) +
   \frac{e^{-ir}b(r)}{r^{\frac{n-3}{2}}}\Big]\eta_{\tilde d}(r)
\]
is supported on $\{r \ge d\}$ and satisfies the bounds, for all $\ell\ge0$
and $r>d$,
\[
|\partial_r^{\ell} a_{\tilde d}(r) |\le C_\ell\, \tilde
d^{-\frac{n-3}{2}}\, r^{-\ell} \le C_\ell\,
d^{-\frac{n-3}{2}}\, r^{-\ell}
\]
uniformly in $\lambda\ge1$. The kernel of the operator
of~\eqref{eq:Phi1Phi2} with $j=0$ equals
\begin{align}
K_{d,\lambda}(x,y)&:= \lambda^{n-3}\int_{\R^{n}} \frac{e^{i\lambda
|x-u|}}{|x-u|^{\frac{n-1}{2}}} \Phi_1\Big(\frac{x-u}{|x-u|}\Big)
a_{\lambda d}(\lambda|x-u|)(\nabla_u A(u)+\nonumber\\
&\qquad +A(u)\nabla_u)  \frac{e^{i\lambda |u-y|}}{
|u-y|^{\frac{n-1}{2}}}\, a_{\lambda d}(\lambda|u-y|)
\Phi_2\Big(\frac{u-y}{|u-y|}\Big)\, du \label{eq:Kdlam}
\end{align}
By our assumption on $A$ we can integrate by parts any number of
times in the $u$ variable since
\[
\big|\partial_u[ |x-u|+|u-y|]\big| = \Big|\frac{x-u}{|x-u|} -
\frac{u-y}{|u-y|} \Big| > \delta
\]
by the angular separation hypothesis between $\supp \Phi_1$ and
$\supp \Phi_2$. In conclusion, for arbitrary $N$,
\[
|K_{d,\lambda}(x,y)|\le C_N(A,d,\delta,n) \lambda^{-N}  \la
y\ra^{-\frac{n-1}{2}}\la x\ra^{-\frac{n-1}{2}}
\]
Here we also used the compact support assumption on $A$ which
restricts the size of $u$ in~\eqref{eq:Kdlam}. This kernel takes
$B\to B^*$ with norm $\les \lambda^{-N}$. In the same way one bounds
the kernel $D^\alpha_{x,y} K_{d,\lambda}(x,y)$ for any $\alpha$ which concludes the argument for $j=0$.

If
$j\ge1$, then write
\begin{align*}
   &R_0^+(\lambda^2)(|x|)[1-\eta_d(|x|)] \\
   &= \lambda^{n-2} R_0(1)(\lambda
   |x|) [1-\eta_{d}(|x|)]\\
   &= \lambda^{\frac{n-3}{2}} \frac{e^{i\lambda|x|}}{|x|^{\frac{n-1}{2}}}\Big[
   a(\lambda|x|) +
   \frac{e^{-i\lambda|x|}b(\lambda|x|)}{|\lambda x|^{\frac{n-3}{2}}}\Big][1-\eta_{\lambda d}(
   \lambda |x|)]\\
   &= \lambda^{\frac{n-3}{2}}
   \frac{e^{i\lambda|x|}}{|x|^{\frac{n-1}{2}}} b_{\lambda d}(\lambda
   |x|)
\end{align*}
where for arbitrary $\tilde d=\lambda d>0$
\[
b_{\tilde d}(r) = \Big[a(r) +
   \frac{e^{-ir}b(r)}{r^{\frac{n-3}{2}}}\Big][1-\eta_{\tilde d}(r)]
\]
satisfies the bounds
$|\partial_r^{\ell} b_{\tilde d}(r) |\le C_\ell\, \tilde
d^{\frac{n-3}{2}}\, r^{-\frac{n-3}{2}-\ell}$ for all $\ell \ge 0$ and $r>0$.
In particular,
\[
\big|D_x^{\ell}[b_{\lambda d}(\lambda|x|)]\big| \le C_\ell d^{\frac{n-3}{2}}
 |x|^{\frac{n-3}{2}- \ell}
\]
uniformly in $\lambda\ge1$. The kernel of the operator
of~\eqref{eq:Phi1Phi2} with $j>0$ now equals
\begin{align}
K_{j,d,\lambda}(x,y)&:= \lambda^{(n-3)(j+2)/2}\int_{\R^{(j+1)n}}
\frac{e^{i\lambda |x-u_0|}}{|x-u_0|^{\frac{n-1}{2}}}
\Phi_1\Big(\frac{x-u_0}{|x-u_0|}\Big) a_{\lambda
d}(\lambda|x-u_0|)\nonumber\\
& \prod_{i=0}^{j-1} (\nabla_{u_i} A(u_i) +A(u_i)\nabla_{u_i})
\frac{e^{i\lambda |u_i-u_{i+1}|}}{|u_i-u_{i+1}|^{\frac{n-1}{2}}}
b_{\lambda
d}(\lambda|u_i-u_{i+1}|) \nonumber\\
&(\nabla_{u_j} A(u_j)+A(u_j)\nabla_{u_j})\frac{e^{i\lambda |u_j-y|}}{
|u_j-y|^{\frac{n-1}{2}}}\, a_{\lambda d}(\lambda|u_j-y|)
\Phi_2\Big(\frac{u_j-y}{|u_j-y|}\Big)\, du \nonumber
\end{align}
We change variables
\[
w_i = \frac12 (u_i-u_{i+1}), \quad 0\le i\le j-1, \quad w_j =
\frac12 (u_0+u_j)
\]
so that $u_0= \sum_{i=0}^j w_i$, $u_j = w_j - \sum_{i=0}^{j-1} w_i$,
and $u_1=u_0-2w_0$, $u_2=u_1-2w_1$ etc. After this substitution we obtain
\begin{align}
K_{j,d,\lambda}(x,y)&:= c\lambda^{(n-3)(j+2)/2}\int_{\R^{(j+1)n}}
\frac{e^{i\lambda |x-u_0|}}{|x-u_0|^{\frac{n-1}{2}}}
\Phi_1\Big(\frac{x-u_0}{|x-u_0|}\Big) a_{\lambda
d}(\lambda|x-u_0|)\nonumber\\
& \prod_{i=0}^{j-1} (\nabla_{u_i} A(u_i) +A(u_i)\nabla_{u_i})
\frac{e^{2i\lambda |w_i|}}{|w_i|^{\frac{n-1}{2}}}
b_{\lambda
d}(2\lambda|w_i|) \nonumber\\
&(\nabla_{u_j} A(u_j)+A(u_j)\nabla_{u_j})\frac{e^{i\lambda |u_j-y|}}{
|u_j-y|^{\frac{n-1}{2}}}\, a_{\lambda d}(\lambda|u_j-y|)
\Phi_2\Big(\frac{u_j-y}{|u_j-y|}\Big)\, dw \nonumber
\end{align}
where it is understood that $u_0=u_0(w)$ and $u_j=u_j(w)$.  Of particular
interest, the phase functions involving $x,y$ contain the variable $w_j$, viz.
\[
\lambda |x-u_0| = \lambda \big|x-w_j- \sum_{i=0}^{j-1} w_i\big|,\qquad \lambda|y-u_j| = \lambda\big|y- w_j + \sum_{i=0}^{j-1} w_i\big|
\]
whereas none of the short range free resolvent kernels contains $w_j$. Thus, since
\[
 \Big|\frac{x-u_0}{|x-u_0|} -
\frac{u_j-y}{|u_j-y|} \Big| > \delta
\]
integration by parts in~$w_j$
yields as before
\[
|D_{x,y}^\alpha K_{j,d,\lambda}(x,y)|\le C_{N,\alpha}(A,d,\delta,n,j) \lambda^{-N}  \la
y\ra^{-\frac{n-1}{2}}\la x\ra^{-\frac{n-1}{2}}
\]
for any $\alpha, N$ and we are done.
\end{proof}
\begin{remark}
One should be careful that the integrand above is locally integrable.  Each
short range resolvent contains a singularity on the order of $|w_i|^{2-n}$
which becomes more severe with repeated differentiation.

Fortunately, in the full change of coordinates $\nabla_{u_i} =
\frac12(\nabla_{w_i} - \nabla_{w_{i-1}})$ for each $i = 1, 2, \ldots, j$,
while $\nabla_{u_0} = \frac12(\nabla_{w_0} + \nabla_{w_j})$.  Therefore
the dangerous piece $\frac{b_{\lambda d}(2\lambda|w_i|)}{|w_i|^{(n-1)/2}}$
experiences the $\nabla_{u_i}$ immediately preceding it, but no other
derivatives, creating local singularities no worse than $|w_i|^{1-n}$.
\end{remark}

Note that under the conditions of Lemma~\ref{lem:nonstatphase}
 each undirected product in~\eqref{eq:decomp2} satisfies the bound
\begin{equation} \label{eq:nonstatphase}
\bignorm[\prod_{k=1}^m \big(R_{i_k}(\lambda^2)L\big)][X^*_\lambda \to
X^*_\lambda] = \O(\lambda^{-N})
\end{equation}
for any $N\ge1$.
We now show by approximation that vanishing still holds for merely
continuous $A$, but without any control over the rate.

\begin{lemma} \label{lem:RiemannLebesgue}
Let $\Phi_1$ and $\Phi_2$ be chosen as in
Lemma~\ref{lem:nonstatphase}. Suppose $A$ is a continuous function
with $A\in Y$.  Then each undirected product in~\eqref{eq:decomp2}
satisfies the limiting bound
\begin{equation} \label{eq:RiemannLebesgue}
\lim_{\lambda \to \infty} \bignorm[ \prod_{k=1}^m
\big(R_{i_k}(\lambda^2)L\big)][X^*_\lambda \to X^*_\lambda] = \ 0.
\end{equation}
for any $\lambda\ge1$.
\end{lemma}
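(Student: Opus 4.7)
The plan is to reduce the claim to Lemma~\ref{lem:nonstatphase} by approximating $A$ in the $Y$-norm with smooth compactly supported functions; the hypothesis $A\in C^0(\R^n)$ is precisely what makes this density step possible. Given $\eta>0$, choose $N$ with $\sum_{j>N}2^j\|A\|_{L^\infty(D_j)}<\eta/3$ and let $\chi$ be a smooth cutoff equal to $1$ for $|x|<2^{N+1}$ and supported in $|x|<2^{N+2}$; then $\|A-A\chi\|_Y<\eta/3$. Mollifying the continuous, compactly supported function $A\chi$ converges uniformly on $\R^n$ by uniform continuity, and since all supports remain in a fixed compact set this upgrades to convergence in $Y$. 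The result is $\tilde A\in C_c^\infty(\R^n)$ with $\|A-\tilde A\|_Y<\eta$. Set $\tilde L=i(\nabla\cdot\tilde A+\tilde A\cdot\nabla)$ and $L'=L-\tilde L$.

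Expanding $L=\tilde L+L'$ inside $\prod_{k=1}^m(R_{i_k}(\lambda^2)L)$ produces $2^m$ terms. By Corollary~\ref{cor:R_0}, Lemma~\ref{lem:R_d}, and Lemma~\ref{lem:L} (with $V\equiv 0$ as already reduced at the start of this section), each factor $R_{i_k}(\lambda^2)\tilde L$ or $R_{i_k}(\lambda^2)L'$ maps $X^*_\lambda\to X^*_\lambda$ with norm $\le C_n\|\tilde A\|_Y$ or $\le C_n\eta$, respectively. The sum of $X^*_\lambda\to X^*_\lambda$ operator norms over all terms containing at least one copy of $L'$ is therefore bounded by
\[
\sum_{r=1}^m\binom{m}{r}C_n^m\|\tilde A\|_Y^{m-r}\eta^r\ =\ C_n^m\bigl[(\|\tilde A\|_Y+\eta)^m-\|\tilde A\|_Y^m\bigr],
\]
which, with $m$ fixed, can be made arbitrarily small by choosing $\eta$ sufficiently small.

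The one remaining term has every $L$ replaced by $\tilde L$. Because the product is undirected, there exist indices $1\le k_1<k_2\le m$ for which $i_{k_1},i_{k_2}$ are numerical with $\dist(\supp\Phi_{i_{k_1}},\supp\Phi_{i_{k_2}})>10\delta$, while $i_k=d$ for every $k_1<k<k_2$. Factor this all-$\tilde L$ term as
\[
\Bigl(\prod_{k=k_2+1}^m(R_{i_k}(\lambda^2)\tilde L)\Bigr)\Bigl[R_{i_{k_2}}(\lambda^2)\tilde L(R_d(\lambda^2)\tilde L)^{k_2-k_1-1}R_{i_{k_1}}(\lambda^2)\Bigr]\Bigl(\tilde L\prod_{k=1}^{k_1-1}(R_{i_k}(\lambda^2)\tilde L)\Bigr).
\]
The middle bracket matches exactly the operator estimated in Lemma~\ref{lem:nonstatphase}, hence its $X_\lambda\to X^*_\lambda$ norm is $O(\lambda^{-N})$ for every $N$; the outer factors have combined norm $\le C_n^m\|\tilde A\|_Y^m\lambda$, the single power of $\lambda$ coming from the isolated leftmost $\tilde L$ in the right block (which maps $X^*_\lambda\to X_\lambda$). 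The all-$\tilde L$ term thus vanishes as $\lambda\to\infty$. Given any $c>0$, first choose $\eta$ small enough that the perturbative terms sum to below $c/2$, then take $\lambda$ large so the all-$\tilde L$ term is also below $c/2$; since $c$ was arbitrary, \eqref{eq:RiemannLebesgue} follows. The only genuinely subtle step is the density reduction, which relies crucially on continuity of $A$ throughout $\R^n$ to push mollification errors to zero in the $Y$-norm.
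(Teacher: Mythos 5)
Your proof is correct and follows essentially the same route as the paper: approximate $A$ in the $Y$-norm by a smooth compactly supported $\tilde A$ (possible by continuity), bound all terms containing the error $L-\tilde L$ uniformly in $\lambda$ via Corollary~\ref{cor:R_0}, Lemma~\ref{lem:R_d}, and Lemma~\ref{lem:L}, and let the nonstationary-phase bound of Lemma~\ref{lem:nonstatphase} kill the all-smooth term as $\lambda\to\infty$. The only differences are expository: you spell out the density step and the factorization behind~\eqref{eq:nonstatphase} explicitly, and you organize the error via a $2^m$-term expansion rather than the paper's telescoping bound $\les \gamma(2\|A\|_Y)^{m-1}$.
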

\begin{proof}
For any small $\gamma > 0$, there exists a smooth approximation
$A_\gamma \in C^\infty(\R^n)$ of compact support so that $\norm[A -
A_\gamma][Y] < \gamma$ and $\|A_\gamma\|_Y<2\|A\|_Y$. Define the
operator $L_\gamma$ accordingly. By Lemma~\ref{lem:L} and
Corollary~\ref{cor:R_0}
\begin{equation*}
\bignorm[\prod_{k=1}^m \big(R_{i_k}(\lambda^2)L\big) -
 \prod_{k=1}^m \big(R_{i_k}(\lambda^2)L_\gamma \big)][X^*_\lambda \to X^*_\lambda]
\les \gamma (2\|A\|_Y)^{m-1}
\end{equation*}
uniformly in $\lambda\ge1$.  Thus, by \eqref{eq:nonstatphase},
\begin{equation*}
\limsup_{\lambda \to \infty} \bignorm[\prod_{k=1}^m
\big(R_{i_k}(\lambda^2)L\big)][X^*_\lambda \to X^*_\lambda] \les \gamma
(2\|A\|_Y)^{m-1}
\end{equation*}
Sending $\gamma\to0$ finishes the proof.
\end{proof}

\begin{proof}[Proof of Lemma~\ref{lem:largem}]  Lemma~\ref{lem:directedprod} provides a
recipe for selecting a value of $m$, together with a partition of
unity $\{\Phi_i\}$ and a short-range threshold $d$, so that the sum
over all directed products in~\eqref{eq:decomp2} will be an operator
of norm less than $\frac{c}{2}$, or $C_{r}m^{n-1}r^m$.  We may choose $m$ so that
$2^m > C_{r}m^{n-1}$. This fixes the number of undirected
products as approximately $\delta^{m(1-n)} = (20 m)^{m(n-1)}$.  For
each of these, Lemma~\ref{lem:RiemannLebesgue} asserts that its
operator norm tends to zero as $\lambda \to \infty$.  The same is
true for the finite sum over all undirected products of length $m$.
In particular it is less than the directed product estimate provided $\lambda >
\lambda_1(m)$ is sufficiently large.
\end{proof}

\section{Small energies} \label{sec:small}

The remaining task is to verify that for sufficiently small $\lambda_0$ (and
following our convention regarding $\lambda^2\pm i0$ from before)
\begin{align}\label{eq:sm1}
&\sup_{0<\lambda<\lambda_0} \|Z_0 R_L(\lambda^2) Z_0^* \|_{2\to2} < \infty,
\end{align}
where $Z_0=\la x\ra^{-\sigma}|\nabla|^{\half}$ for some $\sigma > \frac12$.
As in the high energy case (and implicitly for intermediate energies), we need
to impose an invertibility condition which allows the resolvent
$R_L(\lambda^2)$ to be bounded between suitable spaces.
More precisely, by the resolvent identity,
\[ R_L(\lambda^2+i0)= (1+R_0(\lambda^2+i0)L)^{-1}R_0(\lambda^2+i0)
\]
provided the inverse on the right-hand side exists. We have
\begin{align*}
&\| Z_0 R_L(\lambda^2) Z_0^* \|_{2\to2} \\
&=  \| Z_0 (1+R_0(\lambda^2)L)^{-1} Z_0^{-1} Z_0 R_0(\lambda^2)
Z_0^*
 \|_{2\to2} \\
 &\le \| Z_0
(1+R_0(\lambda^2)L)^{-1} Z_0^{-1} \|_{2\to2}\,\,\, \|Z_0
R_0(\lambda^2) Z_0^*
 \|_{2\to2}
\end{align*}
By the smoothing properties of $Z_0$ relative to $H_0$,
\[
\sup_{\lambda}\|Z_0 R_0(\lambda^2) Z_0^*
 \|_{2\to2} <\infty
\]
Thus, it will suffice to verify that
\begin{equation}\label{eq:Z0small}
\sup_{|\lambda|<\lambda_0} \| Z_0 (1+R_0(\lambda^2)L)^{-1} Z_0^{-1}
\|_{2\to2}<\infty
\end{equation}
Let $G=R_0(0)$, and $B_\lambda=R_0(\lambda^2)-G$. We will prove that
under suitable conditions
\begin{align}
\label{eq:z0GL} &Z_0(I + GL)^{-1}Z_0^{-1} = (I+Z_0GLZ_0^{-1})^{-1}: L^2\to L^2\\
\label{eq:z0BL} &\|Z_0B_\lambda L Z_0^{-1}\|_{2\to2} \rightarrow
0\;\;\;\;\;\text{ as } \lambda \to 0
\end{align}
This implies \eqref{eq:Z0small} by summing the Neumann series directly.
The proof of \eqref{eq:z0GL} is a standard Fredholm alternative argument,
while~\eqref{eq:z0BL} will follow from properties of the kernel of $B_\lambda$.

\begin{lemma}\label{lem:frac}
Let $n>\beta > 0$, $\beta\ge\alpha$. Then
\[ \la \nabla\ra^{\alpha}|\nabla|^{-\beta} \::\: L^{2,\sigma_1}\to
L^{2,-\sigma_2}
\]
for all pairs $\sigma_1,\sigma_2> \beta-\frac{n}{2}$ satisfying
$\sigma_1+\sigma_2> \beta$, and is a compact operator whenever the
strict inequality $\beta > \alpha$ holds.

The same result holds when $\beta \le 0$ and $\beta \ge \alpha$, under the
conditions $\sigma_1, \sigma_2 > \beta - \frac{n}{2}$ and
$\sigma_1 + \sigma_2 \ge 0$.  Compactness in this case requires strict
inequalities for both $\beta > \alpha$ and $\sigma_1 + \sigma_2 > 0$.
\end{lemma}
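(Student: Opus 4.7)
The plan is to decompose the Fourier symbol $m(\xi) := \la\xi\ra^{\alpha}|\xi|^{-\beta}$ into low- and high-frequency parts and reduce the weighted mapping property to a single bilinear estimate for the Riesz potential.  Writing $m = m_0 + m_\infty$ with a smooth cutoff at unit frequency, the high-frequency part $m_\infty(\nabla)$ is a pseudodifferential operator of nonpositive order $\alpha - \beta \le 0$ whose kernel has rapid off-diagonal decay and a local diagonal singularity of order $|x-y|^{\beta-\alpha-n}$.  Its weighted boundedness follows by commuting weights past the smoothing operator (using the commutator lemmas in the appendix), and when $\beta > \alpha$ the smoothing gain together with the Rellich compact embedding yields compactness between the weighted $L^2$ spaces.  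The low-frequency part $m_0(\nabla)$ differs from the pure Riesz convolution $c_{n,\beta}|x-y|^{\beta-n}$ by a Schwartz correction, so the remaining work reduces to estimating this Riesz kernel.

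The core bilinear estimate I would prove is
\[
  \int\!\!\int_{\R^n\times\R^n} \la x\ra^{-\sigma_2}\,|x-y|^{\beta-n}\,\la y\ra^{-\sigma_1}\,|f(y)\,g(x)|\,dx\,dy \;\lesssim\; \|f\|_2\,\|g\|_2
\]
valid under $\sigma_1,\sigma_2 > \beta - n/2$ and $\sigma_1+\sigma_2 > \beta$.  I would split $\R^n \times \R^n$ according to the relative sizes of $|x|$ and $|y|$.  In the off-diagonal regions $\{|x| \le |y|/2\}$ and $\{|y| \le |x|/2\}$ one has $|x-y| \sim \max(|x|,|y|)$, so the kernel factorizes as a tensor product such as $\la x\ra^{-\sigma_2}\la y\ra^{\beta-n-\sigma_1}$; Cauchy-Schwarz in each variable produces the convergence condition $\sigma_1+\sigma_2 > \beta$ when the complementary weight is subcritical ($\sigma_i < n/2$) and $\sigma_i > \beta - n/2$ when it is supercritical.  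In the near-diagonal region $\{|x|\sim|y| \sim 2^j\}$, the two weights freeze to constants of size $2^{-j(\sigma_1+\sigma_2)}$, while the restricted Riesz kernel has $L^2 \to L^2$ operator norm $\lesssim 2^{j\beta}$ on the annular shell by Schur's test; the resulting sum $\sum_{j\ge 0} 2^{j(\beta - \sigma_1 - \sigma_2)}\|f_j\|_2\|g_j\|_2 \lesssim \|f\|_2\|g\|_2$ converges precisely because $\sigma_1+\sigma_2 > \beta$.

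Compactness when $\beta > \alpha$ is obtained by approximating $T$ by operators whose kernels are truncated to $\{\varepsilon < |x-y| < R\} \cap \{|x|,|y| < R\}$: each truncation is Hilbert-Schmidt between the weighted spaces, and the truncation error vanishes as $\varepsilon \to 0$ and $R \to \infty$ by the strict versions of the inequalities.  For the second case $\beta \le 0$ with $\beta \ge \alpha$, the symbol $m$ is bounded and $T$ is $L^2$-bounded as a multiplier with an off-diagonal kernel of rapid (Bessel-type) decay, so the same Schur/annular dichotomy gives the weighted bound under the weaker requirement $\sigma_1+\sigma_2 \ge 0$, and compactness then requires the strict versions of both $\beta > \alpha$ and $\sigma_1+\sigma_2 > 0$.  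The main technical obstacle will be the bilinear estimate at the sharp endpoints: one must choose the correct subregion condition ($\sigma_1+\sigma_2 > \beta$ versus $\sigma_i > \beta - n/2$) uniformly in the dyadic index and be careful not to lose the small margin in either inequality.
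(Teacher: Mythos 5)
Your strategy is sound and reaches the stated conclusions, but it is organized differently from the paper's argument. The paper proves boundedness only for the extreme symbol $\la \xi\ra^{\beta}|\xi|^{-\beta}$, subtracts the identity so that the remaining kernel has an integrable local singularity $|x-y|^{2-n}$ and long-range decay $|x-y|^{\beta-n}$, and then runs a dyadic decomposition in $|x|\sim 2^i$, $|y|\sim 2^j$ with Hilbert--Schmidt bounds off the diagonal band and Schur's test on it; the general case $\alpha\le\beta$ and all compactness statements are then obtained at one stroke by peeling off the compact factor $\la x\ra^{-\eps}\la\nabla\ra^{-\delta}$ (which also explains why only $\sigma_2'<\sigma_2$ is needed in the kernel estimate). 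You instead split the symbol at unit frequency, reduce the low-frequency piece to the weighted bilinear estimate for the Riesz kernel $|x-y|^{\beta-n}$, and prove that estimate by the region decomposition $|x|\ll|y|$, $|y|\ll|x|$, $|x|\sim|y|$ with Cauchy--Schwarz and annular Schur bounds; compactness comes from Hilbert--Schmidt truncations, with the $\eps$-truncation error of size $\eps^{\beta-\alpha}$ using $\beta>\alpha$ and the $R$-tails controlled by the strict weight inequalities. The analytic core (dyadic localization in the weights plus Schur/Hilbert--Schmidt) is the same; what your version buys is a self-contained bilinear estimate and an explicit compactness mechanism, while the paper's version buys economy, since one kernel estimate plus the compactness of $\la x\ra^{-\eps}\la\nabla\ra^{-\delta}$ handles every $\alpha\le\beta$ at once.

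Two side claims should be corrected, though neither derails the scheme. First, the low-frequency kernel does not differ from $c_{n,\beta}|x-y|^{\beta-n}$ by a Schwartz function; what is true (and all you need) is that the kernel of $\chi_0(\nabla)\la\nabla\ra^{\alpha}|\nabla|^{-\beta}$ is bounded near the diagonal and decays like $|x-y|^{\beta-n}$, hence is dominated pointwise by the Riesz kernel entering your bilinear estimate. Second, in the case $\beta<0$ the kernel is \emph{not} of rapid Bessel type: the conormal singularity $|\xi|^{-\beta}$ at $\xi=0$ limits the decay to $|x-y|^{\beta-n}$, so a crude Schur bound that absorbs growing weights through $\la x-y\ra^{|\sigma_1|+|\sigma_2|}$ would fail for $\sigma_1$ (or $\sigma_2$) sufficiently negative within the allowed range $\sigma_i>\beta-\tfrac n2$; you must run the same annular dichotomy as in the case $\beta>0$, using only the polynomial decay, which is exactly what your "same Schur/annular dichotomy" sentence amounts to. (Rapid decay is correct only at the endpoint $\beta=0$, which is also the borderline $\sigma_1+\sigma_2=0$ that the paper treats as a trivial special case.) Finally, for the high-frequency piece the appendix commutator lemmas are not the natural tool; a direct Schur argument with the kernel bound $|x-y|^{\beta-\alpha-n}$ locally and rapid decay at infinity, together with $\sigma_1+\sigma_2\ge0$, is what actually does the work there.
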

\begin{proof}
Note that $\la x\ra^{-\eps}\la \nabla\ra^{-\delta}$ is compact on
$L^2$ for any choice of $\eps,\delta>0$. Therefore, it suffices to establish
the boundedness of $\japanese[x]^{-\sigma_2'}
\japanese[\nabla]^\beta |\nabla|^{-\beta}\japanese[x]^{-\sigma_1}$ on~$L^2$
for any value $\sigma_2' < \sigma_2$.
Strict inequalities are necessary to ensure that $\sigma_2'$ can also be
chosen to satisfy the hypotheses of the lemma.  Let
\[K(x,y)= \la x\ra^{-\sigma_2'}[\la\xi\ra^{\beta}|\xi|^{-\beta}]^{\vee}(x-y)
\la y\ra^{-\sigma_1}\]
Then
\[
|I - K(x,y)|\les \left\{ \begin{array}{cc} \la
x\ra^{-\sigma_2}|x-y|^{2-n}\la y\ra^{-\sigma_1} &
|x-y|\le 1\\
\la x\ra^{-\sigma_2}|x-y|^{\beta-n}\la y\ra^{-\sigma_1} & |x-y|>1
\end{array}\right.
\]
based on the fact that $1 - \japanese[\xi]^\beta|\xi|^{-\beta} \sim |\xi|^{-2}$
for all $|\xi| \ge 1$.

Define $K_{ij}(x,y)= [I- K(x,y)]\chi_i(x)\chi_j(y)$ where
$\chi_i(x)=1_{[|x|\le1]}$ if $i=0$ and $\chi_i(x)=1_{[2^{i-1}<|x|\le
2^i]}$ if $i\ge1$. Then
\[ |K_{ij}(x,y)| \les 2^{-i\sigma_2' - j\sigma_1} 2^{- \max(i,j)
(n-\beta)}
\]
provided $|i-j|>1$. Hence, $ K_1 := \sum_{|i-j|>1} K_{ij}$ defines a
bounded  operator (in fact, compact operator) on $L^2$ since its
Hilbert-Schmid norm is controlled by
\[
\| K_1\|_{HS}^2 \les \sum_{|i-j|>1} 2^{-2(i\sigma_2' + j\sigma_1)}
2^{- 2\max(i,j) (n-\beta)} 2^{(i+j)n} <\infty
\]
For fixed $i$, the sum over $j= i+2, i+3, \ldots $ is only finite if
$\sigma_1 > \beta-\frac{n}{2}$, and its value is then comparable
to $2^{-2i(\sigma_1 + \sigma_2' -\beta)}$.  Finite summation over $i$
then requires that $\sigma_1 + \sigma_2' > \beta$.
Similar conditions are noted, with the roles of $\sigma_1$ and $\sigma_2'$
reversed, when considering the summation over all $j > i+1$.

By almost orthogonality, $K_0=\sum_{|i-j|\le 1} K_{ij}$ satisfies
\[
\|K_0\|_{2\to2} \les \max_{|i-j|\le 1} \|K_{ij}\|_{2\to2}
\]
By Schur's test,
\[
\|K_{ij}\|_{2\to 2} \les 2^{-i(\sigma_1+\sigma_2')} 2^{i\max(\beta, 0)}
\]
when $|i -j| \le 1$, which is uniformly bounded provided $\sigma_1 + \sigma_2'
\ge \max(\beta, 0)$.  We are done evaluating the three components of
the decomposition $K = I - K_0 - K_1$.
\end{proof}
\begin{remark}
To be precise, the above proof did not capture the points $\beta = 0$,
$\sigma_1 + \sigma_2 = 0$, but this can be shown trivially as a special case.
\end{remark}

Next, we apply this result to prove compactness of the zero energy
operators.

\begin{lemma}
\label{lem:G} Assume that $L$ is as in \eqref{eq:ass1},
\eqref{eq:ass2}, and $Z_0 = \la x\ra^{-\half-}|\nabla|^{\half}$.
Then $Z_0 GL Z_0^{-1}$ is a compact operator on $L^2$.
\end{lemma}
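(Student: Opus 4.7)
The plan is to rewrite
\[
Z_0 G L Z_0^{-1} = \langle x\rangle^{-\sigma}|\nabla|^{-3/2}\,L\,|\nabla|^{-\half}\langle x\rangle^{\sigma}
\]
using $|\nabla|^{\half} G = |\nabla|^{-3/2}$, split $L = V + i\nabla\cdot A + iA\cdot\nabla$, and exhibit each summand as $(\text{bounded})\cdot(\text{compact})\cdot(\text{bounded})$ on $L^2$, with the compact factor always supplied by Lemma~\ref{lem:frac}. Throughout I exploit the freedom $\sigma\in(\half,\half+\eps')$ (with $\eps'$ as small as desired) from Section~\ref{sec:setup} to distribute weights as needed.

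For the potential piece, factor $V = \langle x\rangle^{-(1+\eps/2)}\tilde V\langle x\rangle^{-(1+\eps/2)}$ with $\tilde V := \langle x\rangle^{2+\eps}V\in L^\infty$, using $|V|\les\langle x\rangle^{-2-\eps}$. The operator then splits as
\[
\bigl[\langle x\rangle^{-\sigma}|\nabla|^{-3/2}\langle x\rangle^{-(1+\eps/2)}\bigr]\cdot\tilde V\cdot\bigl[\langle x\rangle^{-(1+\eps/2)}|\nabla|^{-\half}\langle x\rangle^{\sigma}\bigr].
\]
The rightmost factor is $L^2$-bounded by Lemma~\ref{lem:frac} with $\beta=\half$, $\alpha=0$, and weights $-\sigma$, $1+\eps/2$ summing above $\half$ once $\eps'<\eps/2$. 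The leftmost factor is compact by the same lemma with $\beta=3/2>0=\alpha$ and weights summing well beyond $3/2$. For the $\nabla\cdot A$ piece, the identity $|\nabla|^{-3/2}\partial_j = R_j|\nabla|^{-\half}$ with $R_j$ a Riesz transform recasts the operator in the symmetric form $\langle x\rangle^{-\sigma}|\nabla|^{-\half}R\cdot A\,|\nabla|^{-\half}\langle x\rangle^{\sigma}$, and the same factorization (now using $|A|\les\langle x\rangle^{-1-\eps}$) produces compactness.

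The $A\cdot\nabla$ piece is the hard case because the derivative ends up to the right of $A$. Using $\partial_j|\nabla|^{-\half}=|\nabla|^{\half}R_j$ produces an extra $|\nabla|^{\half}$ that must be commuted through $A$, and this is precisely what hypothesis~\eqref{eq:ass2} is tailored for. Lemma~\ref{lem:Amap} asserts that $|\nabla|^{\half}A\langle x\rangle^{1+\eps'}|\nabla|^{-\half}$ is $L^2$-bounded, so after inserting $\langle x\rangle^{\pm(1+\eps')}$ we may replace $|\nabla|^{\half}A$ by a bounded operator followed by $\langle x\rangle^{-(1+\eps')}|\nabla|^{\half}$. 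The remaining $|\nabla|^{\half}$ is then slid past the outer weight $\langle x\rangle^{\sigma}$ via the commutator estimate of Lemma~\ref{lem:comm}, after which the operator reduces to the same bounded--compact--bounded template as above.

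The main obstacle, and the reason the full strength of~\eqref{eq:ass2} is needed, is exactly this commutation of a half-derivative across the coefficient $A$ in the $A\cdot\nabla$ term; once it is in hand, all remaining bookkeeping (boundedness of Riesz transforms on weighted $L^2$, commutators of fractional derivatives with smooth weights, and the verification of the weight inequalities in Lemma~\ref{lem:frac}) is routine.
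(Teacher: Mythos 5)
Your overall strategy is the paper's own: split $L$ into $V$, $\nabla\cdot A$, $A\cdot\nabla$, write $Z_0G=\la x\ra^{-\sigma}|\nabla|^{-3/2}$, and exhibit each piece as bounded$\cdot$compact$\cdot$bounded using Lemma~\ref{lem:frac} for compactness, Lemma~\ref{lem:comm} for weights versus fractional derivatives, and Lemma~\ref{lem:Amap} for the half-derivative across $A$. The $V$ piece is fine, and the $\nabla\cdot A$ piece works too, though not with the literal ``same factorization'': if you split $A$'s decay evenly as $\la x\ra^{-(\half+\eps/2)}$ on each side, the right factor $\la x\ra^{-(\half+\eps/2)}|\nabla|^{-\half}\la x\ra^{\sigma}$ violates the condition $\sigma_1+\sigma_2>\beta$ of Lemma~\ref{lem:frac} (you would need $\sigma<\eps/2$); essentially all of $A$'s decay must be sent to the right (any amount exceeding $\half+\sigma$), with the left block $\la x\ra^{-\sigma}R_j|\nabla|^{-\half}$ compact from $\sigma>\half$ alone. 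That is within the weight-distribution freedom you claim, so it is only a bookkeeping correction.

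The genuine problem is the order of operations in the $A\cdot\nabla$ term. There the operator is $\la x\ra^{-\sigma}|\nabla|^{-3/2}A_j|\nabla|^{\half}R_j\la x\ra^{\sigma}$, so the half-derivative sits to the \emph{right} of $A$ and it is the adjoint of Lemma~\ref{lem:Amap} that applies; more seriously, if you first ``replace'' $A|\nabla|^{\half}$ by $\la x\ra^{-(1+\eps')}|\nabla|^{\half}S$ with $S$ bounded, the outer growing weight $\la x\ra^{\sigma}$ is left stranded on the far right behind $S$ and $R_j$, which are bounded operators but not multipliers, so there is nothing left to absorb it and no way to ``slide the remaining $|\nabla|^{\half}$ past $\la x\ra^{\sigma}$''. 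Moreover Lemma~\ref{lem:comm} does not permit commuting $|\nabla|^{\half}$ across a bare growing weight: its hypotheses require the inner decaying weight to be strictly stronger than the outer growing one ($\sigma_1>\sigma_2$). The correct order — and the reason the paper introduces the auxiliary weight $w=\la x\ra^{-\tau}$, $\tau\in(\sigma,\half+\eps')$, in the $Y_j^*Z_j$ decomposition — is to first move $\la x\ra^{\sigma}$ inward, through $R_j$ by weighted boundedness of Riesz transforms and through $|\nabla|^{\half}$ by Lemma~\ref{lem:comm} at the cost of a reserved slice $\la x\ra^{-\tau}$ of $A$'s decay, and only then apply (the adjoint of) Lemma~\ref{lem:Amap} to $A\la x\ra^{1+\eps'}$; the left block $\la x\ra^{-\sigma}|\nabla|^{-3/2}\la x\ra^{-(1+\eps'-\tau)}|\nabla|^{\half}$ is then compact by factoring through $\la x\ra^{-\sigma}|\nabla|^{-1}\la x\ra^{-\half}$ as in the paper's treatment of $O_3$. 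With that reordering your argument closes; as written, the step fails.
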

\begin{proof} We shall use the decomposition
\[L= Y_1^* Z_1 + Y_2^* Z_2 + V
\] where
\[
  Y_1  := i \nabla |\nabla|^{-\frac12} \cdot Aw^{-1} , \quad Z_1
  := |
 \nabla|^{\frac12} w, \quad  Y_2 := Z_1,\quad Z_2 := Y_1
\]
As before, $w = \japanese[x]^{-\tau}$ for some
$\tau \in (\frac12, \frac12+\eps')$.  For convenience we will take
$\tau = \frac12(1+ \eps')$ in the calculations below.
Our goal is to prove that the operators
\[\begin{split}
O_1&= \la x\ra^{-\sigma}|\nabla|^{\half}GV|\nabla|^{-\half}\la
x\ra^{\sigma} \\
O_2&= \la x\ra^{-\sigma}|\nabla|^{\half}G Y_1^* Z_1|\nabla|^{-\half}\la x\ra^{\sigma}\\
O_3&= \la x\ra^{-\sigma}|\nabla|^{\half}G Y_2^* Z_2|\nabla|^{-\half}\la x\ra^{\sigma}
\end{split}
\]
are compact in $L^2$ for some $\sigma > \frac12$.
In what follows, we will use the commutator
bounds of the appendix without further mention. The same applies to
the fact that $|\nabla|^{\frac12} A\la x\ra^{1+}
|\nabla|^{-\frac12}$
(and therefore its adjoint) are bounded on $L^2$,
see Lemma~\ref{lem:Amap} above.

For $O_1$, it suffices to observe that
\[
\la x\ra^{-\sigma}|\nabla|^{\half}GV|\nabla|^{-\half} \la x\ra^{\sigma}=
\big(\la x\ra^{-\sigma}|\nabla|^{-\frac32}
\la x\ra^{-1}\big)\big(\la x\ra V |\nabla|^{-\half}\la x\ra^{\sigma}\big)
\]
is compact by Lemma~\ref{lem:frac} provided
$\sigma \in (\frac12, \frac12 + \eps)$.

Denote the bounded and compact operators on $L^2$ by $\calB$ and
$\calC$, respectively. Then,
\begin{align*}
  O_2 &= \la x\ra^{-\sigma}|\nabla|^{-\frac32} Aw^{-1} \nabla |\nabla|^{-\half}
  \big(|\nabla|^{\frac12}w |\nabla|^{-\frac12}\la
  x\ra^{\sigma}\big)\\
  &\in  \la x\ra^{-\sigma}|\nabla|^{-\frac32}w |\nabla|^{\frac12}
  \big(|\nabla|^{-\frac12} Aw^{-2} |\nabla|^{\half}\big) \nabla
  |\nabla|^{-1} \calB \\
  &\subset  \big(\la x\ra^{-\sigma}|\nabla|^{-1}  \la x\ra^{-\frac12}\big)
  \big(\la x\ra^{\frac12}|\nabla|^{-\frac12} w |\nabla|^{\frac12}\big)
\calB \subset \calC.
\end{align*}
requires $\sigma \in (\frac12, \tau)$.  Finally, under the same conditions,
\begin{align*}
  O_3 &= \la x\ra^{-\sigma}|\nabla|^{-\frac32}
  w|\nabla|^{\half} (\nabla |\nabla|^{-1})
 \big(|\nabla|^{\half} Aw^{-2} |\nabla|^{-\half}\big)
  \big(|\nabla|^{\half} w |\nabla|^{-\half} \la x\ra^{\sigma}\big) \\
  &\in \la x\ra^{-\sigma}|\nabla|^{-\frac32}
  w|\nabla|^{\half} \calB \\&
  \subset \big(\la x\ra^{-\sigma}|\nabla|^{-1}\la x\ra^{-\half}\big)
  \big(\la x\ra^{\half} |\nabla|^{-\half} w|\nabla|^{\half}\big) \calB
  \subset \calC
\end{align*}
and we are done.
\end{proof}

As an immediate consequence we arrive at the following.
\begin{cor}
  \label{cor:invert}
Let $Z_0=\la x\ra^{-\sigma}|\nabla|^{\half}$
with $\sigma \in (\frac12, \frac12+\eps')$.
Assume that $\ker(I+Z_0 GLZ_0^{-1})=\{0\}$ as an operator on $L^2(\R^n)$.
Then $I+ Z_0 GL Z_0^{-1}$ is invertible on $L^2$.
\end{cor}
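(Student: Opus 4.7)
The plan is to apply the Fredholm alternative directly. Lemma~\ref{lem:G} establishes that $Z_0 G L Z_0^{-1}$ is a compact operator on $L^2(\R^n)$ under the hypothesis $\sigma \in (\frac12, \frac12 + \eps')$. Consequently, $I + Z_0 G L Z_0^{-1}$ is a compact perturbation of the identity on the Hilbert space $L^2$.

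By the standard Riesz--Schauder theory for compact operators (see e.g.\ \cite{RS4}), any such operator $I + K$ with $K$ compact is Fredholm of index zero. In particular, its range is closed and
\[
\dim \ker(I + Z_0 G L Z_0^{-1}) = \dim \ker(I + (Z_0 G L Z_0^{-1})^*)= \mathrm{codim}\,\mathrm{range}(I + Z_0 G L Z_0^{-1}).
\]
The assumption $\ker(I + Z_0 G L Z_0^{-1}) = \{0\}$ therefore forces the cokernel to vanish as well, so the operator is bijective on $L^2$. The open mapping theorem then provides a bounded two-sided inverse.

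Thus the entire content of the corollary is a one-line invocation of Fredholm theory once Lemma~\ref{lem:G} is in hand; there is no genuine obstacle at this step. The only subtle point worth flagging is that the compactness in Lemma~\ref{lem:G} required the strict inequality $\sigma > \frac12$ (which is available) together with the matching upper bound $\sigma < \frac12 + \eps'$, and these are exactly the hypotheses already imposed in the statement of the corollary, so the application is clean.
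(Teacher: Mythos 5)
Your proof is correct and follows exactly the paper's own route: compactness of $Z_0 G L Z_0^{-1}$ from Lemma~\ref{lem:G} plus the Fredholm (Riesz--Schauder) alternative, so trivial kernel gives a bounded inverse. Nothing further is needed.
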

\begin{proof}
The statement follows from  Fredholm's alternative. Note that
\[(I+Z_0 GL Z_0^{-1} )^{-1} = Z_0 (I+GL)^{-1} Z_0^{-1}
\]
where $GL$ on the right-hand side is an operator on
$Z_0^{-1}(L^2(\R^n))$.
\end{proof}

Now, we verify the vanishing norm condition \eqref{eq:z0BL}.

\begin{lemma}For any $L = i(A \cdot \nabla + \nabla \cdot A) + V$ satisfying
conditions \eqref{eq:ass1},~\eqref{eq:ass2}, one has
\[
\lim_{\lambda\to 0^+} \|\la x\ra^{-\sigma}|\nabla|^{\half}B_\lambda
L|\nabla|^{-\half} \la x\ra^{\sigma}\|_{2\to 2} =0\\
\]
\end{lemma}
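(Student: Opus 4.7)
The plan is to follow the algebraic structure of Lemma~\ref{lem:G} nearly verbatim, with $G$ replaced by $B_\lambda$, and to extract smallness using the elementary resolvent identity
\[
B_\lambda = R_0(\lambda^2) - G = \lambda^2\, G\, R_0(\lambda^2) = \lambda^2\, R_0(\lambda^2)\, G,
\]
which follows from $(-\Delta-\lambda^2)R_0(\lambda^2)=I=(-\Delta)G$. Thus each occurrence of $B_\lambda$ automatically contributes a gain of $\lambda^2$ at the cost of an extra weighted free resolvent.

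Concretely, using the decomposition $L = Y_1^*Z_1 + Y_2^*Z_2 + V$ from \eqref{eq:Ldecomp}--\eqref{eq:YZdef}, I would reduce the claim to showing
\[
\lim_{\lambda\to 0^+}\|\tilde O_i(\lambda)\|_{2\to 2}=0,\qquad i=1,2,3,
\]
where $\tilde O_i(\lambda)$ is obtained from $O_i$ of Lemma~\ref{lem:G} by substituting $B_\lambda$ in place of $G$. For each such piece I would apply the identity $B_\lambda=\lambda^2 G R_0(\lambda^2)$ and use the fact that $G$, $R_0(\lambda^2)$ and $|\nabla|^{\pm 1/2}$ all commute (being functions of $-\Delta$) to move the copy of $G$ into the position it occupied in Lemma~\ref{lem:G}, where it was absorbed into a negative fractional derivative. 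This yields a representation
\[
\tilde O_i(\lambda)=\lambda^2\,O_i^{(\mathrm L)}\cdot R_0(\lambda^2)\cdot O_i^{(\mathrm R)},
\]
in which $O_i^{(\mathrm R)}$ is precisely the bounded right factor from the proof of Lemma~\ref{lem:G} (not involving $G$) and $O_i^{(\mathrm L)}$ is (up to rearrangement of multipliers) the compact left factor from that proof.

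Next, I would sandwich the central resolvent symmetrically by inserting intermediate weights $\la x\ra^{-\tau}$ and $\la x\ra^{\tau}$ with $\tau>\tfrac12$, absorbing each $\la x\ra^{\pm\tau}$ into the adjacent factor. The central piece $\la x\ra^{-\tau}R_0(\lambda^2)\la x\ra^{-\tau}$ is then uniformly bounded on $L^2$ for $\lambda\in[0,\lambda_0]$ by the classical low-energy Agmon--Kato limiting absorption principle for $H_0$ in dimensions $n\ge 3$; the modified outer factors remain bounded (indeed compact) on $L^2$ by Lemma~\ref{lem:frac}, provided $\tau$ is chosen in a suitable range compatible with the exponents appearing in the factorization of Lemma~\ref{lem:G}. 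Combining the three bounds gives $\|\tilde O_i(\lambda)\|_{2\to 2}\lesssim\lambda^2$, and summing in $i$ finishes the proof.

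The main obstacle is the weight bookkeeping in step (2): when the auxiliary weight $\la x\ra^{\tau}$ is absorbed into $O_i^{(\mathrm L)}$, the resulting operator must still satisfy the hypotheses of Lemma~\ref{lem:frac}, which in the critical dimension $n=3$ require both weight exponents on $|\nabla|^{-\beta}$ to be strictly positive. In borderline situations this forces me to use the stronger form $\|\la x\ra^{-\tau}\la\nabla\ra R_0(\lambda^2)\la\nabla\ra\la x\ra^{-\tau}\|_{2\to 2}<\infty$ of the free LAP (which already holds at $\lambda=0$ since $\la\nabla\ra G\la\nabla\ra=I+G$) to redistribute one derivative between the resolvent sandwich and the outer factors. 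For $\tilde O_2,\tilde O_3$ one must, in addition, commute $|\nabla|^{1/2}$ past $A$ and past the weights using Lemma~\ref{lem:Amap} and the fractional commutator estimates from the appendix, exactly as in Lemma~\ref{lem:G}; the algebra is identical, so no new difficulty appears beyond the weight tracking just described.
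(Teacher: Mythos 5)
There is a genuine gap, and it is exactly at the point you flag as the ``main obstacle'': the weight bookkeeping does not close, and the central claim it rests on is false. You assert that $\la x\ra^{-\tau}R_0(\lambda^2+i0)\la x\ra^{-\tau}$ is uniformly bounded on $L^2$ for $\lambda\in[0,\lambda_0]$ with $\tau>\half$. It is not: at $\lambda=0$ this operator is $\la x\ra^{-\tau}G\la x\ra^{-\tau}$ with $G=|\nabla|^{-2}$, and by the paper's own Lemma~\ref{lem:frac} (with $\beta=2$) boundedness requires total weight $\sigma_1+\sigma_2>2$, i.e.\ $\tau>1$; for $\tau\in(\half,1]$ the norm diverges as $\lambda\to0^+$ (the same applies to the variant $\la x\ra^{-\tau}\la\nabla\ra R_0(\lambda^2)\la\nabla\ra\la x\ra^{-\tau}=\la x\ra^{-\tau}(I+(1+\lambda^2)R_0(\lambda^2))\la x\ra^{-\tau}$, which at $\lambda=0$ again needs $\tau>1$). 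One could hope to trade the prefactor $\lambda^2$ from $B_\lambda=\lambda^2 R_0(\lambda^2)G$ against a quantified blow-up of the central piece, but then the outer factors must carry weights $\la x\ra^{+\tau}$ with $\tau>\half$ next to operators of negative order, and this is where the decay budget fails: the hypotheses \eqref{eq:ass1}--\eqref{eq:ass2} supply exactly the decay that the factorization in Lemma~\ref{lem:G} consumes, with nothing left over for an extra resolvent in the middle. Concretely, for the $V$--term your rearrangement produces the left factor $\la x\ra^{-\sigma}|\nabla|^{-\frac32}\la x\ra^{\tau}$, which is unbounded: Lemma~\ref{lem:frac} would require $\sigma-\tau>\frac32$, while here $\sigma-\tau\le 0$, and no rearrangement of $|\nabla|^{\pm\half}$, $\la\nabla\ra^{\pm1}$ can make a smoothing multiplier absorb a growing weight. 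A telltale symptom of the accounting error is the conclusion $\|\tilde O_i(\lambda)\|_{2\to2}\les\lambda^2$: with weights only $\la x\ra^{-\sigma}$, $\sigma=\half+$, the true decay rate is only a small positive power of $\lambda$ (of size $\sigma-\half$ or the $\eps$ in \eqref{eq:ass1}); no correct argument can yield $\lambda^2$ here, because $B_\lambda$ at scale $|x-y|\sim\lambda^{-1}$ loses almost all of the formal $\lambda^2$ gain against such weak weights.

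For comparison, the paper does not try to recycle the zero-energy compactness argument. It first uses the commutator estimates (Lemma~\ref{lem:comm}), Lemma~\ref{lem:Amap} and fractional integration to strip $L$ and the derivatives away, reducing the claim to
$\lim_{\lambda\to0^+}\|\la x\ra^{-\sigma}\nabla B_\lambda\la x\ra^{-\sigma}\|_{2\to2}=0$ with \emph{decaying} weights on both sides; it then proves this by direct kernel estimates, splitting $\nabla B_\lambda=T_{\lambda,0}+T_{\lambda,1}$, where the near-diagonal piece $T_{\lambda,0}$ gains the factor $\lambda^{\sigma-\half}$ and is handled by fractional integration, and the oscillatory piece $T_{\lambda,1}$ is handled on dyadic blocks by (a scaled form of) Lemma~\ref{lem:VCP}, yielding $\les\lambda^{\eps}R_1^{\half}R_2^{\half+\eps}$, summable against the weights since $\sigma>\half+\eps$. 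If you want to pursue your route, you would in effect have to prove a quantitative low-energy bound for $\la x\ra^{-\tau}R_0(\lambda^2+i0)\la x\ra^{-\tau}$ with $\tau$ close to $\half$ together with new weighted multiplier bounds not covered by Lemma~\ref{lem:frac}; that is essentially the kernel analysis the paper performs directly.
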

\begin{proof} By the commutator identities, the assumptions on $A$ and $V$, and fractional integration, the
claim above is a consequence of the bound
\begin{equation}
\label{eq:sample} \lim_{\lambda\to 0^+} \|\la x\ra^{-\sigma} \nabla
B_\lambda \la x\ra^{-\sigma}\|_{2\to 2} =0
\end{equation}
To be precise, the reduction proceeds as follows.  Recall that $B_\lambda$
is defined as a function of $-\Delta$, and therefore commutes with all
derivatives.  First,
\begin{align*}
\la x\ra^{-\sigma} |\nabla|^{\half}B_\lambda V|\nabla|^{-\half}
\la x \ra^{\sigma} &=
\sum_{i=1}^n \big(\la x\ra^{-\sigma}\partial_i B_\lambda
\la x\ra^{-\sigma}\big) \big(\la x\ra^{\sigma}\partial_i|\nabla|^{-\frac32}
V|\nabla|^{-\half}\la x\ra^{\sigma}\big) \\
&= \sum_i \big(\la x\ra^{-\sigma}\partial_i B_\lambda \la x\ra^{-\sigma}\big)
S_1^i
\end{align*}
where each $S_1^i$ is bounded on $L^2$ by the fractional integration
estimates in Lemma~\ref{lem:frac}.  For the gradient term $\nabla \cdot A$
we have
\[
\la x\ra^{-\sigma}|\nabla|^{-\half}B_\lambda \nabla \cdot A |\nabla|^{-\half}
\la x\ra^{\sigma} =
\big(\la x\ra^{-\sigma} \nabla B_\lambda \la x\ra^{-\sigma} \big) \cdot S_2
\]
where the operator
\[
S_2 = \big(\la x\ra^{\sigma}|\nabla|^{\half} w |\nabla|^{-\half}\big)
      \big(|\nabla|^{\half} Aw^{-2} |\nabla|^{-\half}\big)
      \big(|\nabla|^{\half} w |\nabla|^{-\half}\la x\ra^{\sigma}\big)
\]
is bounded on $L^2$ by Lemma~\ref{lem:Amap} and the commutator estimates in
Lemma~\ref{lem:comm}.  The second gradient term, $A \cdot \nabla$,
requires a slightly more intricate decomposition.
\[
\la x\ra^{-\sigma}|\nabla|^{-\half}B_\lambda A \cdot \nabla |\nabla|^{-\half}
\la x\ra^{\sigma} = \sum_{i,j = 1}^n
\big(\la x\ra^{-\sigma} \partial_i B_\lambda \la x\ra^{-\sigma} \big)
 S_3^{i,j}
\]
where each $S_3^{i,j}$ has the structure
\begin{align*}
S_{i,j} &=
\big(\la x\ra^{\sigma} \partial_i|\nabla|^{-1} \la x\ra^{-\sigma} \big)
\big(\la x\ra^{\sigma}|\nabla|^{-\half} w |\nabla|^{\half}\big) \\
&\quad \times \big(|\nabla|^{-\half} A_j w^{-2} |\nabla|^{\half}\big)
\big(|\nabla|^{-\half} w |\nabla|^{\half} \la x\ra^{\sigma}\big)
\big(\la x\ra^{-\sigma} \partial_j |\nabla|^{-1} \la x\ra^{\sigma}\big)
\end{align*}
The central term is bounded on $L^2$ by Lemma~\ref{lem:Amap}; it is flanked
by a pair of commutators as in Lemma~\ref{lem:comm}.  The boundedness of
the outer operators simply reflects the boundedness of the Riesz transforms
on the weighted space $\la x\ra^{\pm \sigma}L^2$.

Now it remains to verify \eqref{eq:sample}.
With the notation of Section~\ref{sec:directed}, we have
\[
B_\lambda(x,y)=
\frac{b(\lambda|x-y|)}{|x-y|^{n-2}}-\frac{b(0)}{|x-y|^{n-2}}+\lambda^{\frac{n-3}{2}}
e^{i\lambda|x-y|}\frac{a(\lambda|x-y|)}{|x-y|^{\frac{n-1}{2}}}
\]
We write  $\nabla B_\lambda =T_{\lambda,0}+T_{\lambda,1}$, where
\begin{align*}
|T_{\lambda,0}(x,y)|& \les \frac{\lambda}{|x-y|^{n-2}} \\
 T_{\lambda,1}(x,y)  & =\lambda^{\frac{n-3}{2}}\Big(\lambda+\frac{1}{|x-y|}\Big)
 e^{i\lambda|x-y|}\frac{\tilde a(\lambda|x-y|)}{|x-y|^{\frac{n-1}{2}}}
\end{align*}
where $\tilde a $ is a modified symbol with the same properties as
$a$.  The support of $T_{\lambda,0}$ is restricted to the set
$\{\lambda|x-y| \les 1\}$, so the point-wise estimate
\[
|T_{\lambda,0}(x,y)| \les
\frac{\lambda^{\sigma-\half}}{|x-y|^{n-\half-\sigma}}
\]
is also valid.  Thanks to the positive power of $\lambda$ in the numerator,
$T_{\lambda,0}$ satisfies \eqref{eq:sample} by fractional integration.
$T_{\lambda,1}$ requires more care.
Let $\chi$ be a smooth cut-off for the
region $\{x:|x|\sim 1\}$. It suffices to prove that for $R_2\gtrsim
R_1 > 1 $ and  $R_2\gtrsim 1/\lambda$ (since $\tilde a(\lambda|x-y|)=0$
for $|x-y|<1/\lambda$),
\begin{equation*}
\label{eq:tl1}\|\chi(x/R_1) T_{\lambda,1}(x,y) \chi(y/R_2)\|_{2\to2}
\les \lambda^{\eps}R_1^{\half}R_2^{\half+\eps}
\end{equation*}
This, however, is an almost immediate corollary of Lemma~\ref{lem:VCP}.
We can chop $T_{\lambda,1}$ into finitely many conical pieces with
$\delta\sim 1$.  A properly scaled version of~\eqref{eq:Tbound} states that
\[
\|\chi(x/R_1) T_{\lambda,1}(x,y) \chi(y/R_2)\|_{2\to2}
\les \sqrt{R_1 R_2} \le \lambda^{\eps} R_1^{\half} R_2^{\half+\eps}
\]
for each piece, because $\lambda R_2 > 1$.
\end{proof}

We now relate the condition in Corollary~\ref{cor:invert} to the
notion of resonance and/or eigenvalue at zero.

\begin{lemma}
  \label{lem:reson}
  Suppose that zero is neither an eigenvalue nor a resonance of
  $H$. Then
  \[
\ker(I+Z_0 GL Z_0^{-1})=\{0\} \text{\ \ on\ \ }L^2(\R^n)
  \]
  for $Z_0=\la x\ra^{-\sigma}|\nabla|^{\half}$, with
$\sigma \in (\half, \half+\eps')$
In particular, \eqref{eq:sm1}
holds for sufficiently small~$\lambda_0$.
\end{lemma}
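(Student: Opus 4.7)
The plan is to show that any $f\in\ker(I+Z_0 GL Z_0^{-1})\subset L^2$ corresponds, via $g := Z_0^{-1}f = |\nabla|^{-\half}\la x\ra^\sigma f$, to a distributional solution of $Hg=0$ lying in $L^{2,\frac{n-4}{2}-}$. The non-resonance and non-eigenvalue hypothesis then forces $g\equiv 0$, hence $f = Z_0 g = 0$. Once the kernel is trivial, Corollary~\ref{cor:invert} gives invertibility of $I+Z_0 GLZ_0^{-1}$ on $L^2$; combined with the vanishing-norm lemma immediately preceding, a Neumann series argument will deliver~\eqref{eq:Z0small} and hence~\eqref{eq:sm1}.

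For the first step, suppose $f+Z_0 GL Z_0^{-1}f=0$ and apply $Z_0^{-1}$ on the left to obtain $g+GLg=0$. Since $f\in L^2$ we have $\la x\ra^\sigma f\in L^{2,-\sigma}$, and Lemma~\ref{lem:frac} with $\alpha=0$, $\beta=\half$, $\sigma_1=-\sigma$ and any $\sigma_2>\sigma+\half$ shows $|\nabla|^{-\half}\colon L^{2,-\sigma}\to L^{2,-\sigma_2}$. With $\sigma$ chosen slightly above $\half$, this places $g\in L^{2,-1-}$. Since $-1<\frac{n-4}{2}$ for every $n\ge 3$, one obtains $g\in L^{2,\frac{n-4}{2}-}$ automatically, with no additional bootstrapping required.

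To verify $Hg=0$ in the distributional sense, I will apply $-\Delta$ to the identity $g=-GLg$, exploiting the fact that $-\Delta$ inverts $G=R_0(0)$ on the class of distributions to which $Lg$ belongs. This last point is the main technical obstacle: $g$ has only the local Sobolev regularity $H^{1/2}$ inherited from $f\in L^2$, so the first-order part $i(\nabla\cdot A + A\cdot\nabla)g$ must be interpreted with some care. The cleanest way to do this is to reuse the factorization $L=Y_1^*Z_1+Y_2^*Z_2+V$ from Section~\ref{sec:setup}, so that each factor maps $g$ into a weighted $L^2$ space by Lemma~\ref{lem:Amap} and the decay assumption~\eqref{eq:ass1}. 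This renders $Lg$ a distribution with enough decay for $G$ to act as the inverse of $-\Delta$. With $Hg=0$ and $g\in L^{2,\frac{n-4}{2}-}$ in hand, the hypothesis of the lemma forces $g=0$, and consequently $f=Z_0 g=0$.

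Finally, with the kernel shown to be trivial, Corollary~\ref{cor:invert} yields that $I+Z_0 GLZ_0^{-1}$ is bounded-invertible on $L^2$. Writing
\[ I+Z_0 R_0(\lambda^2)LZ_0^{-1} \;=\; \bigl(I+Z_0 GLZ_0^{-1}\bigr) \,+\, Z_0 B_\lambda L Z_0^{-1}, \]
the preceding lemma ensures that the second summand tends to zero in $L^2\to L^2$ norm as $\lambda\to 0^+$. For $\lambda$ sufficiently small it is therefore an arbitrarily small perturbation of an invertible operator, and a Neumann series expansion around the first term produces a uniform bound on the inverse. This is exactly~\eqref{eq:Z0small}, from which~\eqref{eq:sm1} follows by the chain of inequalities derived at the start of Section~\ref{sec:small}.
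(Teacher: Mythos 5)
There is a genuine gap, and it sits exactly where you claim no work is needed. In the paper's convention $L^{2,\tau}=\{g:\la x\ra^{\tau}g\in L^2\}$, so a \emph{larger} exponent means a \emph{smaller} space: $L^{2,\tau_1}\subset L^{2,\tau_2}$ precisely when $\tau_1\ge\tau_2$. Your computation correctly places $g=Z_0^{-1}f$ in $L^{2,-1-}$, but since $-1<\frac{n-4}{2}$ for all $n\ge3$ the inclusion runs the wrong way: $L^{2,-1-}\supsetneq L^{2,\frac{n-4}{2}-}$, so membership in $L^{2,-1-}$ does \emph{not} give $g\in L^{2,\frac{n-4}{2}-}$ ``automatically.'' The hypothesis that zero is neither an eigenvalue nor a resonance only excludes nonzero solutions of $Hg=0$ lying in $L^{2,\frac{n-4}{2}-}$ (for $n\ge5$ this amounts to genuine $L^2$ eigenfunctions), so a solution known only to lie in $L^{2,-1-}$ cannot be ruled out by it; in particular for $n\ge5$ you would need decay of order nearly $\la x\ra^{-(n-4)/2}$, while $L^{2,-1-}$ even allows growth. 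Thus your argument, as written, does not reach the hypothesis of the lemma.

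The bootstrapping you dismiss is in fact the heart of the matter. The paper's proof upgrades the decay of $f$ iteratively: by rerunning the proof of Lemma~\ref{lem:G} with an extra weight $\la x\ra^{\rho}$ commuted through the operators $O_1,O_2,O_3$ (using Lemmas~\ref{lem:comm}, \ref{lem:frac}, \ref{lem:Amap}, and the decay assumption \eqref{eq:ass1}), one shows $Z_0GLZ_0^{-1}:L^{2,\rho}\to L^{2,\rho+\eps}$ for $\rho\in[0,\frac n2-1)$, the restriction $\rho<\frac n2-1$ coming from the constraint $\rho+\half<\frac{n-1}{2}$ in the commutator bounds. Iterating $f=-Z_0GLZ_0^{-1}f$ finitely many times yields $f\in L^{2,(n-2)/2}$, and only then does $h=Z_0^{-1}f$ land in $L^{2,\frac{n-4}{2}-}$, the space to which the eigenvalue/resonance hypothesis applies; the conclusion $h=0$, hence $f=0$, then follows. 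Your remaining steps (the distributional identity $Hg=0$ via $G=(-\Delta)^{-1}$ acting on $Lg$, Corollary~\ref{cor:invert}, and the Neumann series combining the zero-energy invertibility with the vanishing of $\|Z_0B_\lambda LZ_0^{-1}\|_{2\to2}$) agree with the paper, but without the weighted bootstrap the central implication fails.
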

\begin{proof}
Suppose $f\in L^2(\R^n)$ satisfies
\[
f+ Z_0 GL Z_0^{-1} f =0
\]
We proved in Lemma~\ref{lem:G} that $Z_0GLZ_0^{-1}:L^2\to L^2$. By a
simple modification of the proof, we can obtain
$Z_0GLZ_0^{-1}:L^{2,\rho}\to L^{2,\rho+\eps}$ for $\rho\in[0,\frac{n}{2}-1)$
and for some fixed $\eps>0$ which depends on the decay rates of $A$ and
$V$.  This is done by commuting $\japanese[x]^\rho$ through each of the
expressions $O_1$, $O_2$, $O_3$.  Two representative examples from
the study of $O_2$ are presented below.
\begin{multline*}
\la x\ra^{\rho}\big(\la x\ra^{\half}|\nabla|^{-\half} w |\nabla|^{\half}\big)
\la x\ra^{-\rho} \\
=
\big(\la x\ra^{\rho+\half}|\nabla|^{-\half}\la x\ra^{-\rho+\eps}w|\nabla|^{\half}\big)
\big(|\nabla|^{-\half}\la x\ra^{-\rho-\eps}|\nabla|^{\half}\la x\ra^{-\rho}\big)
\end{multline*}
The constraints in Lemma~\ref{lem:comm} require that $\rho+\half <
\frac{n-1}{2}$, hence the upper bound $\rho < \frac{n}{2} -1$. The
second example involves the non-smooth function $A$, however it does
not pose any difficulties.
\begin{multline*}
\la x\ra^{\rho}\big(|\nabla|^{\half} Aw^{-2}|\nabla|^{-\half}\big)
\la x\ra^{-\rho} = \\
\big(\la x\ra^{\rho}|\nabla|^{\half}\la x\ra^{-\rho-\eps}|\nabla|^{-\half}\big)
\big(|\nabla|^{\half}A w^{-2}\la x\ra^{2\eps}|\nabla|^{-\half}\big)
\big(|\nabla|^{\half}\la x\ra^{\rho-\eps} |\nabla|^{-\half}
\la x\ra^{-\rho}\big)
\end{multline*}
These examples also demonstrate the flexibility to adjust the
weights up or down by a factor of $\japanese[x]^{\eps}$.  In this
manner it is possible to accommodate a weight of
$\japanese[x]^{\rho+\eps}$ on one side and $\japanese[x]^{-\rho}$ on
the other.

By iterating the relation $f = -Z_0GLZ_0^{-1}f$ a sufficient number of
times, it follows that $f\in L^{2,(n-2)/2}$. Set $h:= Z_0^{-1} f$. Then
$h=-GL h$. We have $h\in \cap_{\tau>\frac{n-4}2} L^{2,\tau}(\R^n)$
since $Z_0^{-1}:L^{2,\rho}\to L^{2,\rho-1-}$.
It follows, see \cite{JenKat}, that $Hh=0$ in the
distributional sense.  If $n \ge 5$, we see that $h$ is a true $L^2$
eigenfunction of $H$.  In dimensions $n=3,4$ we can only conclude that
$h$ exists in polynomially weighted $L^2$, making it indicative of a resonance.
However, by our assumption on zero energy it
follows that $h=0$ and therefore $f=0$ as desired.
\end{proof}

\section{Appendix: H\"ormander's  Plancherel theorem, a commutator bound, and the fractional Leibniz rule.}

The following is a version of H\"ormander's variable coefficient Plancherel theorem, see Theorem~1.1 in~\cite{Hor2}.

\begin{prop}\label{prop:vcp}
Let $a=a(u,v), \Psi=\Psi(u,v) \in C^\infty(\R^{n}\times \R^{m})$ with $\supp (a)\subset B_n(0,1)\times B_m(0,1)$
and $\Psi$ real-valued. Write $u=(u',u'')$, $v=(v',v'')$ and assume that $u', v'\in \R^{n_1}$ where $1\le n_1\le
\min(n,m)$. Assume that on the support of $a$, for some finite constants $\mu>0$, and $M>1$,
\begin{align*}
|\nabla_{u'} \Psi(u',u'', v',v'') - \nabla_{u'} \Psi(u',u'',
w',w'')| &\ge \mu|v'-w'|\\
\sup_{|\alpha|\le n_1+1}|D_{u'}^\alpha[\nabla_{u'} \Psi(u',u'', v',v'') - \nabla_{u'}
\Psi(u',u'', w',w'')]| &\le M\mu|v'-w'|\\
\sup_{|\alpha|\le n_1+1} \|\partial_{u'}^\alpha\, a\|_\infty &\le M
\end{align*}
Then the operator
\[
(T_\lambda f)(u) := \int e^{i\lambda\Psi(u,v)} a(u,v) f(v)\, dv
\]
satisfies the estimate
\[
\|T_\lambda f\|_{L^2(\R^m)} \le C(n,m,M) \la\lambda\mu\ra^{-\frac{n_1}{2}} \|f\|_{L^2(\R^n)}
\]
for all $\lambda>0$. The constant $C$ depends only on the dimensions $n,m$ and $M$.
\end{prop}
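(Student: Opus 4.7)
The plan is to run the standard $T_\lambda^*T_\lambda$ argument. When $\lambda\mu\le 1$ the asserted bound reduces to $\|T_\lambda\|_{2\to 2}\les 1$, which follows immediately from Schur's test using the compact supports $B_n(0,1)\times B_m(0,1)$ together with the amplitude bound $\|a\|_\infty\le M$. So the content lies in the regime $\lambda\mu>1$.

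In that regime I would form the kernel of $T_\lambda^*T_\lambda$,
\[
L(v,w)=\int_{\R^n} e^{i\lambda[\Psi(u,v)-\Psi(u,w)]}\,a(u,v)\overline{a(u,w)}\,du,
\]
and observe that the phase $\phi(u):=\lambda[\Psi(u,v)-\Psi(u,w)]$ satisfies $|\nabla_{u'}\phi(u)|\ge \lambda\mu|v'-w'|$ on the support of the integrand, by the first hypothesis. I would then integrate by parts $n_1+1$ times in $u'$ using the first-order operator $P:=\frac{1}{i|\nabla_{u'}\phi|^2}\,\nabla_{u'}\phi\cdot\nabla_{u'}$ whose transpose fixes $e^{i\phi}$. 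The two remaining hypotheses are exactly what is needed for this: the upper bound on $D_{u'}^\alpha[\nabla_{u'}\Psi(\cdot,v)-\nabla_{u'}\Psi(\cdot,w)]$ cancels the factors that appear when $\nabla_{u'}$ lands on $|\nabla_{u'}\phi|^{-2}$ or on $\nabla_{u'}\phi$, while the bound on $\partial_{u'}^\alpha a$ controls the amplitude. The result is
\[
|L(v,w)|\le C(n,m,M)\,(1+\lambda\mu|v'-w'|)^{-(n_1+1)}.
\]

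The rest is Schur. The support condition on $a$ confines $v'',w''$ to a unit ball, so
\[
\sup_w\int |L(v,w)|\,dv \;\le\; C\int_{\R^{n_1}}(1+\lambda\mu|v'-w'|)^{-(n_1+1)}\,dv' \;\les\; (\lambda\mu)^{-n_1},
\]
and the analogous estimate holds with the roles of $v$ and $w$ reversed. Hence $\|T_\lambda^*T_\lambda\|_{2\to 2}\les (\lambda\mu)^{-n_1}$, which gives the stated bound $\|T_\lambda\|_{2\to 2}\les (\lambda\mu)^{-n_1/2}$.

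The main technical obstacle is the bookkeeping in the iterated integration by parts. After $k$ applications of the adjoint of $P$ one obtains a sum over compositions of multi-indices with at most $k$ derivatives distributed among $a$, $\nabla_{u'}\Psi$ and $|\nabla_{u'}\phi|^{-2}$. An induction on $k$ (essentially Faà di Bruno) shows that each resulting term is pointwise bounded by $C_M(\lambda\mu|v'-w'|)^{-k}$. The essential point is that the upper bounds on $D_{u'}^\alpha[\nabla_{u'}\Psi(\cdot,v)-\nabla_{u'}\Psi(\cdot,w)]$ carry the same linear factor $|v'-w'|$ as the lower bound on $|\nabla_{u'}\phi|$ itself, so every ratio encountered in the iteration stays dimensionally correct and remains controlled by $M$. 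Everything else is standard.
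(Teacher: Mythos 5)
Your proof is correct and follows essentially the same route as the paper: a $T_\lambda^*T_\lambda$ argument with $n_1+1$ integrations by parts in $u'$ using the operator whose transpose fixes $e^{i\phi}$, yielding the kernel bound $\la \lambda\mu|v'-w'|\ra^{-n_1-1}$ and concluding by Schur's test. The only (cosmetic) difference is that the paper freezes $(u'',v'')$ as parameters and works with the reduced operator in $(u',v')$, which gives the slightly stronger mixed-norm estimate $L^1_{v''}L^2_{v'}\to L^\infty_{u''}L^2_{u'}$ as a byproduct, whereas you handle the spectator variables via the compact support of $a$ directly in the Schur step.
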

\begin{proof}
Define
\[ T^{(u'',v'')}_\lambda f(u') = \int_{\R^{n_1}} e^{i\lambda\Psi(u',u'',v',v'')} a(u',u'',v',v'') f(v')\, dv' \]
where $(u'',v'')$ are fixed parameters. We have
\[\begin{split}
&((T^{(u'',v'')}_\lambda)^* T^{(u'',v'')}_\lambda f)(w') = \int K^{(u'',v'')}(w',v')f(v')\, dv'\\
&K^{(u'',v'')}(w',v')  = \int e^{i\lambda[\Psi(u,v)-\Psi(u,w)]}\; \bar{a}(u,w) a(u,v)\, du'
\end{split}\]
Introduce the differential operator
\[
L = -i\lambda^{-1}\frac{\nabla_{u'} \Psi(u',u'', v',v'') - \nabla_{u'} \Psi(u',u'', w',w'')}{|\nabla_{u'}
\Psi(u',u'', v',v'') - \nabla_{u'} \Psi(u',u'', w',w'')|^2} \cdot\nabla_{u'}
\]
Note that for any $|\beta|\le n_1+1$,
\[\begin{split}
&L e^{i\lambda[\Psi(u',u'',v',v'')-\Psi(u',u'',w',w'')]} =
e^{i\lambda[\Psi(u',u'',v',v'')-\Psi(u',u'',w',w'')]}\\
&\Big|D_{u'}^\beta \Big[\frac{\nabla_{u'}\Psi(u',u'', v',v'') - \nabla_{u'} \Psi(u',u'', w',w'')}{|\nabla_{u'}
\Psi(u',u'', v',v'') - \nabla_{u'} \Psi(u',u'', w',w'')|^2}\Big] \Big| \les (\mu |v'-w'|)^{-1}
\end{split}
\]
Hence, for any $N$,
\[\begin{split}
&K^{(u'',v'')}(w',v') = \int e^{i\lambda[\Psi(u,v)-\Psi(u,w)]}\; (L^*)^N\big[\bar{a}(u,w) a(u,v)\big]\, du'
\end{split}
\]
so that by our assumptions,
\[
|K^{(u'',v'')}(w',v')|\le C(n,m,M) \la \lambda\mu| v'-w'|\ra^{-n_1-1}
\]
The lemma now follows by Schur's test. In fact there is the stronger estimate
\[
\|T_\lambda f\|_{L^\infty_{u''}L^2_{u'}} \le C(n,m,M) \la\mu\lambda\ra^{-\frac{n_1}{2}}
\|f\|_{L^1_{v''}L^2_{v'}}
\]
for all $\lambda>0$.
\end{proof}

Next, we present three commutator bounds.
The first one is from H\"ormander~\cite{Hor}, and the
second two are variants which are most likely standard.

\begin{lemma}
  \label{lem:comm} Suppose $\sigma,\tau \in\R$. Then
\[
\la \nabla\ra^\tau w_\sigma^{-1} \la \nabla\ra^{-\tau} w_\sigma
\]
is $L^2$ bounded on $\R^n$.  Further, let  $\sigma_1>\sigma_2$ with
$\sigma_1 > -n$ and $\frac{n-1}{2} > \sigma_2$. Then
\[
|\nabla|^{\frac12} w_{\sigma_1} |\nabla|^{-\frac12} w_{\sigma_2}^{-1}
\]
is also $L^2$ bounded on $\R^n$.  The reversed commutator
\[
|\nabla|^{-\frac12} w_{\sigma_1} |\nabla|^{\frac12} w_{\sigma_2}^{-1}
\]
is $L^2$ bounded on $\R^n$, $n>1$ provided $\sigma_1 > \sigma_2$ with
$\sigma_1 > -n$ and  $\frac{n+1}{2}>\sigma_2$.
In all these expressions, $w_\sigma(x) := \la x\ra^{-\sigma}$.
\end{lemma}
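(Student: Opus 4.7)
For the first bound, the plan is to write
\[
\la \nabla\ra^\tau w_\sigma^{-1} \la \nabla\ra^{-\tau} w_\sigma \;=\; I \;+\; \la \nabla\ra^\tau\big[w_\sigma^{-1},\la\nabla\ra^{-\tau}\big] w_\sigma,
\]
and invoke the standard pseudodifferential symbol calculus. Because $|\partial^\alpha w_\sigma^{-1}(x)|\les\la x\ra^{\sigma-|\alpha|}$, the weight $w_\sigma^{-1}$ sits in a Kohn--Nirenberg symbol class, and its commutator with $\la\nabla\ra^{-\tau}$ has $\xi$-order $-\tau-1$ with spatial decay one power better than $w_\sigma^{-1}$ itself. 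Sandwiching this commutator between $\la\nabla\ra^\tau$ on the left and $w_\sigma$ on the right produces an operator of total order $-1$, bounded on $L^2$ by Calder\'on--Vaillancourt; together with the identity term, this gives a bounded operator on $L^2$.

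For the second bound, the plan is the analogous decomposition
\[
|\nabla|^{1/2} w_{\sigma_1} |\nabla|^{-1/2} w_{\sigma_2}^{-1} \;=\; w_{\sigma_1}w_{\sigma_2}^{-1} \;+\; \big[|\nabla|^{1/2},w_{\sigma_1}\big]|\nabla|^{-1/2} w_{\sigma_2}^{-1}.
\]
The multiplier $w_{\sigma_1}w_{\sigma_2}^{-1}=\la x\ra^{\sigma_2-\sigma_1}$ is bounded because $\sigma_1>\sigma_2$. The commutator $[|\nabla|^{1/2},w_{\sigma_1}]$ has Schwartz kernel proportional to $(w_{\sigma_1}(x)-w_{\sigma_1}(y))|x-y|^{-n-1/2}$, and the cancellation I would exploit is the H\"older estimate
\[
|w_{\sigma_1}(x)-w_{\sigma_1}(y)|\;\les\;|x-y|^{1/2+\eta}\,\big(\la x\ra^{-\sigma_1-1/2-\eta}+\la y\ra^{-\sigma_1-1/2-\eta}\big)
\]
for small $\eta>0$, which reduces the singularity to the integrable form $|x-y|^{-n+\eta}$ at the cost of a spatial decay factor. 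Composing with $|\nabla|^{-1/2} w_{\sigma_2}^{-1}$ and applying Schur's test on a dyadic decomposition of $|x|,|y|,|x-y|$ then delivers the $L^2$ bound. The lower condition $\sigma_1>-n$ ensures that the weight $\la z\ra^{-\sigma_1-1/2-\eta}$ integrates against the Riesz kernel of $|\nabla|^{-1/2}$ at infinity, while the upper condition $\sigma_2<(n-1)/2$ emerges as the sharp threshold for absorbing the growth $\la y\ra^{\sigma_2}$ of $w_{\sigma_2}^{-1}$ in the resulting Hardy--Littlewood--Sobolev-type summation.

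The third bound follows by the same strategy, with the roles of $|\nabla|^{1/2}$ and $|\nabla|^{-1/2}$ reversed. The commutator $[|\nabla|^{-1/2},w_{\sigma_1}]$ has an improved raw kernel of order $|x-y|^{-n+1/2}$ modulated by the same type of H\"older difference, and the $|\nabla|^{1/2}$ now sits on the right. The threshold on $\sigma_2$ shifts upward to $(n+1)/2$ because the Riesz potential has moved to the opposite side of the weight; the restriction $n>1$ enters so that $|\nabla|^{-1/2}$ has a locally integrable convolution kernel of the expected type.

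The main obstacle in the fractional cases is the weighted Schur bookkeeping needed to extract the sharp upper thresholds on $\sigma_2$. The H\"older cancellation in $w_{\sigma_1}(x)-w_{\sigma_1}(y)$ is essential, since the raw singularity $|x-y|^{-n-1/2}$ from $|\nabla|^{1/2}$ is only principal-value convergent, and only after cancellation can the kernel be bounded absolutely. Balancing the decay produced by the H\"older bound (favored by large $\sigma_1$) against the growth introduced by $w_{\sigma_2}^{-1}$ is where the conditions $\sigma_1>-n$ and the respective upper bounds on $\sigma_2$ crystallize.
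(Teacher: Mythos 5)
Your plan takes a genuinely different route from the paper's. For the first operator the paper simply cites H\"ormander's weighted pseudodifferential calculus, which is in substance what you do; note, however, that $w_\sigma^{-1}=\la x\ra^{\sigma}$ grows in $x$, so it is not a symbol in the standard Kohn--Nirenberg class, and the commutator/composition expansion you invoke needs the doubly graded calculus with weights in both $x$ and $\xi$ (exactly the form of H\"ormander's Theorem~18.1.13 quoted in the paper), not Calder\'on--Vaillancourt alone. For the two fractional commutators the paper works on the Fourier side: it splits $|\nabla|^{\pm\frac12}$ into low and high frequencies, expands in Littlewood--Paley pieces, and uses the decay of $\widehat{w_{\sigma_1}}$ together with Schur's test and almost orthogonality, the hypotheses $\sigma_1>-n$ and $\sigma_2<\frac{n\mp1}{2}$ emerging as convergence conditions for the dyadic sums. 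You instead work entirely in physical space: your H\"older bound $|w_{\sigma_1}(x)-w_{\sigma_1}(y)|\les|x-y|^{\frac12+\eta}\big(\la x\ra^{-\sigma_1-\frac12-\eta}+\la y\ra^{-\sigma_1-\frac12-\eta}\big)$ is correct (interpolate the mean-value bound with the trivial one), it does make the commutator kernel absolutely convergent, and a block-dyadic analysis of the composed kernel does reproduce the same thresholds: $\sigma_1>-n$ from convergence of the $y$-integral at infinity, $\sigma_2<\frac{n-1}{2}$ (resp.\ $\frac{n+1}{2}$) from the blocks $|x|\sim1$, $|z|\sim R$, and $n>1$ from the blocks $|x|\sim R$, $|z|\sim1$ in the reversed case. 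Your route is more elementary (no Fourier transform of the weight), at the price of a heavy multi-regime case analysis that you have not actually carried out; the paper's frequency decomposition packages that bookkeeping into a few geometric series.

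Two concrete cautions. First, an unweighted Schur test on the absolute value of the composed kernel genuinely fails: for $|x|\les1$, $|z|\sim R$ the kernel has size about $R^{\sigma_2-n+\frac12}$, so the row integrals grow like $R^{\sigma_2+\frac12}$; what saves the estimate is a weighted Schur test or blockwise Hilbert--Schmidt/orthogonality over dyadic annuli in $|x|$ and $|z|$, which yields $R^{\sigma_2+\frac12-\frac{n}{2}}$ per block and hence exactly the condition $\sigma_2<\frac{n-1}{2}$ --- so the ``weighted Schur bookkeeping'' you defer is not a routine afterthought but the step where all the hypotheses must be extracted. Second, in the third bound you cannot compose pointwise kernel bounds with $|\nabla|^{\frac12}$ standing to the right of the commutator, since $|\nabla|^{\frac12}$ is hypersingular and has no absolutely convergent kernel; either fractionally differentiate the commutator kernel (re-exploiting cancellation) or, more cleanly, use the identity $[|\nabla|^{-\frac12},w_{\sigma_1}]\,|\nabla|^{\frac12}=-|\nabla|^{-\frac12}\,[|\nabla|^{\frac12},w_{\sigma_1}]$, which reduces the third case to the commutator already estimated in the second, now composed with the Riesz potential on the left; running your block analysis on that form is what shifts the threshold to $\sigma_2<\frac{n+1}{2}$ and brings in the restriction $n>1$.
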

\begin{proof}
The first statement is from \cite{Hor}, see Definition~30.2.2, as
well as Theorem~18.1.13. For the second we write, with
$1=\chi_{[|\xi|>1]} + \chi_{[|\xi|\le 1]}$ a smooth partition of
unity,
\begin{align*}
& |\nabla|^{\frac12} w_{\sigma_1} |\nabla|^{-\frac12}
  w_{\sigma_2}^{-1} \\&= |\nabla|^{\frac12}\chi_{[|\nabla|>1]} w_{\sigma_1}
  |\nabla|^{-\frac12} \chi_{[|\nabla|>1]}
  w_{\sigma_2}^{-1} + |\nabla|^{\frac12}\chi_{[|\nabla|>1]} w_{\sigma_1}
  |\nabla|^{-\frac12} \chi_{[|\nabla|<1]}
  w_{\sigma_2}^{-1} \\
& + |\nabla|^{\frac12}\chi_{[|\nabla|<1]} w_{\sigma_1}
  |\nabla|^{-\frac12} \chi_{[|\nabla|>1]}
  w_{\sigma_2}^{-1} + |\nabla|^{\frac12}\chi_{[|\nabla|<1]} w_{\sigma_1}
  |\nabla|^{-\frac12} \chi_{[|\nabla|<1]}
  w_{\sigma_2}^{-1}
\end{align*}
We denote the terms on the right-hand side, in this order, as
high-high, high-low, low-high, and low-low, respectively.
By the first commutator bound it will suffice to deal with the low-low
and high-low cases. We shall do this
 by means of standard Littlewood-Paley
projections $P_j f = \phi_j \ast f$ where $P_j$ denotes a projection
onto frequencies $2^j$.
We start with the low-low case. With $w_j=w_{\sigma_j}$ it is of the
form
\begin{align*}
 \sum_{j,k\ge0} 2^{-\frac{j}{2}+\frac{k}{2}} P_{-j} \big( w_1
P_{-k} (w_2^{-1}  f)\big)(x) &= \sum_{j,k\ge0}
2^{-\frac{j}{2}+\frac{k}{2}}
\phi_{-j} \ast \big( w_1 [\phi_{-k} \ast \frac{f}{w_2}] )(x) \\
& =\sum_{j,k\ge0} 2^{-\frac{j}{2}+\frac{k}{2}} \phi_{-j} \ast \big(
w_1 [\phi_{-k} \ast \frac{f}{w_2}] )(x) \\
&= \sum_{j,k\ge0} 2^{-\frac{j}{2}+\frac{k}{2}} \int \chi_{[|v|\sim
2^{-j}]} G_{k,f}(v)
e^{-ivx} \, dv \\ {\rm where} \quad
G_{k,f}(v) &= \int w_1(y) \phi_{-k}(y-z) e^{ivy}\, dy
\frac{f(z)}{w_2(z)}\, dz \\
&= \int P_{-k} (w_1 e^{iv\cdot}) (z)\frac{f(z)}{w_2(z)}\, dz
\end{align*}
Since $|\widehat{w}_1(\xi)|\les |\xi|^{-(n-\sigma_1)}$, it follows
that, provided $|j-k|\gg1$,
\begin{align*} \sup_{|v|\sim 2^{-j}} \| P_{-k} (w_1
e^{iv\cdot}) \|_{L^{2,\sigma_2}} &\les 2^{k\sigma_2}
2^{-\frac{kn}{2}}
(2^{-j}+2^{-k})^{-(n-\sigma_1)} \\
\sup_{|v|\sim 2^{-j}} |G_{k,f}(v)| &\les 2^{k\sigma_2}
2^{-\frac{kn}{2}} (2^{-j}+2^{-k})^{-(n-\sigma_1)} \|f\|_2 \\
\|P_{-j} \big( w_1 P_{-k} (w_2^{-1}  f)\big)\|_2 &\les
2^{-j\frac{n}{2}}2^{k\sigma_2} 2^{-\frac{kn}{2}}
(2^{-j}+2^{-k})^{-(n-\sigma_1)} \|f\|_2,
\end{align*}
whereas by Schur's lemma, for the case $|j-k|\les1$,
\[
\|P_{-j} \big( w_1 P_{-k} (w_2^{-1}  f)\big)\|_2 \les
2^{-j(\min(\sigma_1, n) - \max(\sigma_2, -n))}
\]
 In conclusion,
\begin{align*}
\sum_{j,k\ge0} &2^{-\frac{j}{2}+\frac{k}{2}} \|P_{-j} \big( w_1
P_{-k} (w_2^{-1}  f)\big)\|_2  \\
&\quad \les \quad \sum_{j\ge k\ge0} 2^{-(j-k)(n+1)/2}
2^{-k(\min(\sigma_1,n)-\max(\sigma_2,-n))} \|f\|_2 \\
&\qquad + \sum_{k>j\ge0} 2^{-(k-j)((n-1)/2-\sigma_2)}
2^{-j(\min(\sigma_1,n)-\max(\sigma_2,-n))} \|f\|_2 \\
&\quad \les \|f\|_2
\end{align*}

Next, consider the high-low case. It takes the form
\[
\sum_{j,k\ge0} 2^{\frac{j}{2}+\frac{k}{2}} P_{j} \big( w_1 P_{-k}
(w_2^{-1}  f)\big)(x) = \sum_{j,k\ge0} 2^{\frac{j}{2}+\frac{k}{2}} \int
\chi_{[|v|\sim 2^{j}]} G_{k,f}(v) e^{-ivx} \, dv
\]
with $G_{k,v}$ as above.  To be precise, the marginal
cases $|j| + |k| \le 1$ are already part of the previous estimate.
Since $|\widehat{w}_1(\xi)|\les |\xi|^{-N}$ for $|\xi|>1$,
\begin{align*}
\sup_{[|v|\sim 2^j]} \| P_{-k} (w_1 e^{iv\cdot})\|_{L^{2,\sigma_2}}
&\les 2^{-k\frac{n}{2}} 2^{-jN} 2^{k\sigma_2} \\
\sup_{[|v|\sim 2^j]} |G_{k,f}(v)| &\les 2^{-k\frac{n}{2}} 2^{-jN}
2^{k\sigma_2} \|f\|_2 \\
\|P_{-j} \big( w_1 P_{-k} (w_2^{-1}  f)\big)\|_2 &\les
2^{(j-k)\frac{n}{2} -jN + k\sigma_2} \|f\|_2
\end{align*}
and thus, finally,
\[\begin{split}
&\sum_{j,k\ge0} 2^{\frac{j}{2}+\frac{k}{2}} \|P_{j} \big( w_1 P_{-k}
(w_2^{-1}  f)\big)\|_2 \\
&\les  \sum_{j,k\ge0} 2^{-j(N-(n+1)/2)} 2^{-k((n-1)/2 - \sigma_2)}
\|f\|_2\les \|f\|_2
\end{split}
\]
and we are done with the second statement.  The third statement is
verified using the same Littlewood-Paley decomposition and many of
the same estimates. The high-high term is again dominated by the
corresponding piece of the first commutator bound.  The low-low and
high-low terms follow the analysis above since they are concerned
with the same operators $P_{\pm j}\big(w_1P_{-k}(w_2^{-1}f)\big)$.
Thus we can quickly sum
\begin{align*}
\sum_{j,k\ge0} &2^{\frac{j}{2}-\frac{k}{2}} \|P_{-j} \big( w_1
P_{-k} (w_2^{-1}  f)\big)\|_2  \\
&\quad \les \quad \sum_{j\ge k\ge0} 2^{-(j-k)(n-1)/2}
2^{-k(\min(\sigma_1,n)-\max(\sigma_2,-n))} \|f\|_2 \\
&\qquad + \sum_{k>j\ge0} 2^{-(k-j)((n+1)/2-\sigma_2)}
2^{-j(\min(\sigma_1,n)-\max(\sigma_2,-n))} \|f\|_2 \\
&\quad \les \|f\|_2
\end{align*}
for the low-low term, and
\[\begin{split}
&\sum_{j,k\ge0} 2^{-\frac{j}{2}-\frac{k}{2}} \|P_{j} \big( w_1 P_{-k}
(w_2^{-1}  f)\big)\|_2 \\
&\les  \sum_{j,k\ge0} 2^{-j(N-(n-1)/2)} 2^{-k((n+1)/2 - \sigma_2)}
\|f\|_2\les \|f\|_2
\end{split}
\]
for the high-low term. Finally, the low-high term is also a concern.
Similar to the high-low case it takes the form
\[
\sum_{j,k\ge0} 2^{\frac{j}{2}+\frac{k}{2}} P_{-j} \big( w_1 P_{k}
(w_2^{-1}  f)\big)(x) = \sum_{j,k\ge0} 2^{\frac{j}{2}+\frac{k}{2}} \int
\chi_{[|v|\sim 2^{-j}]} G_{-k,f}(v) e^{-ivx} \, dv
\]
where $G_{-k,f}$ is the inverse Fourier transform of $w_1[\phi_{-k}\ast
(w_2^{-1}f)]$ as before.  Using the fact that $\widehat{w}_1(\xi)$ decays
rapidly when $|\xi| \ge 1$, we can conclude that
\begin{align*}
\sup_{[|v|\sim 2^{-j}]} \| P_{k} (w_1 e^{iv\cdot})\|_{L^{2,\sigma_2}}
&\les 2^{k\frac{n}{2}} 2^{-kN} \\
\sup_{[|v|\sim 2^{-j}]} |G_{-k,f}(v)| &\les 2^{k\frac{n}{2}} 2^{-kN}
\|f\|_2 \\
\|P_{-j} \big( w_1 P_{k} (w_2^{-1}  f)\big)\|_2 &\les
2^{(k-j)\frac{n}{2} -kN} \|f\|_2
\end{align*}
leading to the summation
\[\begin{split}
&\sum_{j,k\ge0} 2^{\frac{j}{2}+\frac{k}{2}} \|P_{-j} \big( w_1 P_{k}
(w_2^{-1}  f)\big)\|_2 \\
&\les  \sum_{j,k\ge0} 2^{-j((n-1)/2)}2^{-k(N - (n+1)/2)}
\|f\|_2\les \|f\|_2
\end{split}
\]
\end{proof}

Finally, we state a fractional Leibniz rule which is used in the proof of
Lemma~\ref{lem:Amap}

\begin{lemma}\label{lem:Leib}
For any $\alpha\ge0$, $1<p<\infty$, and arbitrarily small
$\gamma>0$,
\[
\| \,|\nabla|^\alpha (fg)\|_p 
\le C_1 \Big[ \| \, |\nabla|^\alpha
f\|_{p_1} \|g\|_{q_1} + \| \, |\nabla|^\alpha
g\|_{p_2} \|f\|_{q_2} \Big]
\]
provided $\frac{1}{p} = \frac{1}{p_1}+\frac{1}{q_1} =
\frac{1}{p_2}+\frac{1}{q_2}$,
$p\le p_1,p_2 < \infty$, $p< q_1,q_2 \le\infty$. The constant $C_1$
depends on $n, \alpha,p,p_1,p_2,q_1,q_2$.
\end{lemma}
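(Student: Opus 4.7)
The plan is to use a Littlewood--Paley paraproduct decomposition, which is the standard route to Kato--Ponce / Coifman--Meyer type fractional Leibniz inequalities. Fix a dyadic partition $1=\sum_j\varphi_j(\xi)$ with $\varphi_j$ supported in $|\xi|\sim 2^j$, set $\Delta_j=\varphi_j(D)$, and write $S_j=\sum_{k<j}\Delta_k$. Decompose
\[
fg = \Pi_1(f,g)+\Pi_2(f,g)+\Pi_3(f,g),\quad \Pi_1=\sum_j S_{j-3}f\cdot\Delta_j g,\quad \Pi_2=\sum_j \Delta_j f\cdot S_{j-3}g,
\]
and $\Pi_3=\sum_j \Delta_j f\cdot \tilde\Delta_j g$ is the diagonal (high-high) part.

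For $\Pi_1$, each summand has Fourier support in an annulus $|\xi|\sim 2^j$, so $|\nabla|^\alpha$ acts on it as a bounded multiple of $2^{\alpha j}$. The Littlewood--Paley square-function characterization of $L^p$ (valid precisely because $1<p<\infty$) gives
\[
\bigl\||\nabla|^\alpha\Pi_1(f,g)\bigr\|_p \;\lesssim\; \Bigl\|\Bigl(\sum_j 2^{2\alpha j}|S_{j-3}f|^2\,|\Delta_j g|^2\Bigr)^{\!1/2}\Bigr\|_p.
\]
Since $|S_{j-3}f|\le \M f$ uniformly in $j$, pull the maximal function out in $L^{q_2}$ and apply H\"older with the pair $(q_2,p_2)$ (where $1/p=1/p_2+1/q_2$, $p\le p_2<\infty$, $p<q_2\le\infty$, so both factors are in the right range for the maximal and square-function bounds):
\[
\lesssim \|\M f\|_{q_2}\Bigl\|\Bigl(\sum_j 2^{2\alpha j}|\Delta_j g|^2\Bigr)^{\!1/2}\Bigr\|_{p_2} \lesssim \|f\|_{q_2}\,\||\nabla|^\alpha g\|_{p_2}.
\]
The paraproduct $\Pi_2$ is handled symmetrically and produces the companion term $\||\nabla|^\alpha f\|_{p_1}\|g\|_{q_1}$.

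The main difficulty is the high-high paraproduct $\Pi_3$, because each summand $\Delta_j f\cdot\tilde\Delta_j g$ has Fourier support only in the ball $|\xi|\lesssim 2^j$ rather than in a shell, so the square-function characterization does not apply directly. The standard fix is to reinsert a Littlewood--Paley decomposition on the output: write $|\nabla|^\alpha\Pi_3=\sum_k\Delta_k|\nabla|^\alpha\sum_{j\ge k-C}\Delta_j f\cdot\tilde\Delta_j g$, bound $|\nabla|^\alpha$ on the $\Delta_k$-piece by $2^{\alpha k}\le 2^{\alpha j}$ (using $\alpha\ge 0$ and $j\gtrsim k$), transfer the factor $2^{\alpha j}$ onto either $\Delta_j f$ or $\tilde\Delta_j g$, and then sum in $k$ by Cauchy--Schwarz, losing only a $\ell^2$ factor in $k-j$. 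The resulting bilinear expression is again of the form $(\sum_j |\Delta_j^{(1)} F|^2\,|\Delta_j^{(2)} G|^2)^{1/2}$ in $L^p$, which is controlled by the same maximal-function times square-function argument as above, yielding either of the two terms on the right-hand side. The parameter $\gamma$ only enters as an arbitrarily small loss in the geometric series used to handle the $\Delta_k$ summation when $q_i=\infty$; it does not appear in the final constants since the series converges unconditionally under the strict inequality $p<q_i$.

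In short, after the paraproduct splitting the two off-diagonal pieces are immediate from $L^p$-square-function + maximal estimates, while the high-high piece is reduced to the same template after an extra Littlewood--Paley decomposition on the output combined with $\alpha\ge 0$; the hypothesis $p<q_1,q_2$ is exactly what is needed so that the maximal function can replace the low-frequency projections, and $p\le p_1,p_2<\infty$ is what allows the square-function characterization. No argument beyond these standard tools is needed, and the inequality follows at once.
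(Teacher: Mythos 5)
Your paraproduct argument is the standard para-differential proof, and it is essentially the same route the paper intends: the paper gives no argument at all beyond citing Taylor (p.\ 105), where precisely this Littlewood--Paley/paraproduct scheme (off-diagonal pieces via maximal plus square-function estimates, high--high piece via an output frequency decomposition) is carried out. The only caveat is that your geometric summation in $k-j$ for the high--high piece genuinely needs $\alpha>0$; for $\alpha=0$ the inequality is just H\"older, so this edge case deserves one sentence rather than the vague appeal to an $\ell^2$ loss and to $\gamma$.
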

\begin{proof} This is standard para-differential calculus. See
for example~\cite{Tay}, page 105.
\end{proof}

\end{document}